%%%%%%%%%%%%%%%%%%%%%%%%%%%%%%%%%%%%%%%%%%%%%%%%%%%%%%%%%%
%  Noncommutative McKay Correspondence, part I
% %
% %%
% %%%
% %%%%
% %%%%%
% %%%%%%
% %%%%%%%
%  Date: April 30, 2013, August 19, 2013, Oct. 17, 2013, May 23, June 12, 2015, Feb 29, 2016
% % April 20, 2016
% %%
% %%%%%%%%%%%%%%%%%%%%%%%%%%%%%%%%%%%%%%%%%%%%%%%%%%%%%%%%%%%%
% %%%%%%%%%%%%%%%%%%%%%%%%%%%%%%%%%%%%%%%%%%%%%%%%%%%%%%%%%%%%
%
%
%revised February 2018 Ellen
%
%
\documentclass{amsart}
\usepackage{latexsym,amsxtra,amscd,ifthen,amsmath,color,url}
\usepackage[usenames,dvipsnames]{xcolor} \usepackage{amsfonts}
\usepackage{verbatim} \usepackage{amsmath} \usepackage{amsthm}
\usepackage{amssymb}
\usepackage[linkcolor=red,citecolor=blue,urlcolor=blue]{hyperref}

\usepackage{pgf} \usepackage{tikz} 
\usetikzlibrary{shapes,shapes.geometric,arrows,fit,calc,positioning,automata,}
\usepackage[latin1]{inputenc} \usepackage{verbatim}

\numberwithin{equation}{section} \theoremstyle{plain}
\newtheorem{theorem}{Theorem}[section]
\newtheorem{lemma}[theorem]{Lemma}

\newtheorem{proposition}[theorem]{Proposition}

\newtheorem{conjecture}[theorem]{Conjecture}

 \theoremstyle{definition}
\newtheorem{definition}[theorem]{Definition}
\newtheorem{hypothesis}[theorem]{Hypothesis}
\newtheorem{example}[theorem]{Example}

\newtheorem{remark}[theorem]{Remark}

\mathchardef\mhyphen="2D

\makeatletter              % This sequence of commands will
\let\c@equation\c@theorem  % incorporate equation numbering
                            % into theorem numbering scheme
                            % 
\makeatother

\DeclareMathOperator{\hdet}{hdet} 
 
\DeclareMathOperator{\gldim}{gldim}

\DeclareMathOperator{\Ext}{Ext}

 \DeclareMathOperator{\GKdim}{GKdim}
\DeclareMathOperator{\End}{End}

\DeclareMathOperator{\Mod}{{\sf Mod}}

\newcommand{\op}{\textup{op}} 
 
 \newcommand{\mc}{\mathcal}

\newcommand{\kk}{\Bbbk}

\begin{document}

\title[McKay Correspondence for Hopf actions, I]{McKay
Correspondence for semisimple Hopf \\
actions on regular graded algebras, I}
\author{K. Chan, E. Kirkman, C. Walton, and J.J. Zhang}

\address{Chan: Department of Mathematics, Box 354350, University of
Washington, Seattle, WA 98195, USA}

\email{kenhchan@math.washington.edu}

\address{Kirkman: Department of Mathematics, P. O. Box 7388, Wake Forest
University, Winston-Salem, NC 27109, USA}

\email{kirkman@wfu.edu}

\address{Walton: Department of Mathematics,
The University of Illinois at Urbana-Champaign,
Urbana, IL 61801, 
USA
}
\email{notlaw@illinois.edu}

\address{Zhang: Department of Mathematics, Box 354350, University of
Washington, Seattle, WA 98195, USA}

\email{zhang@math.washington.edu}

\begin{abstract} 
In establishing a more general version of the McKay 
correspondence, we prove Auslander's theorem for actions of 
semisimple Hopf algebras $H$ on noncommutative Artin-Schelter 
regular algebras $A$ of global dimension two, where $A$ is a 
graded $H$-module algebra, and the Hopf action on $A$ is inner 
faithful with trivial homological determinant. We also show that each fixed ring $A^H$ under such an action arises as an analogue of a coordinate ring of a Kleinian singularity.
\end{abstract}

\subjclass[2010]{16T05, 16E10, 14B05}

\keywords{Artin-Schelter regular algebras, Auslander's theorem,  Hopf algebra action,
McKay correspondence, McKay quiver, trivial homological determinant}

\bibliographystyle{abbrv} \maketitle

\setcounter{section}{-1}

\section{Introduction} \label{yysec0}

Let $\kk$ be an algebraically closed field of characteristic zero. All
algebraic structures in this article are over $\kk$, and we take the
unadorned $\otimes$ to mean $\otimes_{\kk}$.

The classical McKay correspondence provides deep and elegant ties
between:
\begin{enumerate}
\item[$\bullet$]
finite subgroups of ${\text{SL}}_2(\mathbb C)$ or 
${\text{SU}}_2(\mathbb C)$ (also called {\it binary polyhedral groups});
\item[$\bullet$]
simple Lie algebras of type ADE;
\item[$\bullet$]
Platonic solids;
\item[$\bullet$]
Kleinian or DuVal singularities (that is, rational double points or quotient singularities 
${\mathbb C}^2/\widetilde{\Gamma}$ where $\widetilde{\Gamma}$ is a finite subgroup
of ${\text{SL}}_2(\mathbb C)$);
\item[$\bullet$] 
$\widetilde{\Gamma}$-Hilbert schemes;
\item[$\bullet$]
preprojective algebras of the reduced McKay quivers;
\end{enumerate}
as well as other entities in algebra, geometry and string theory. At first glance these items are unrelated, yet they are all governed by the $\mathbb{ADE}$ Dynkin diagrams; see Buchweitz \cite{Bu} for an excellent survey on the classical 
McKay correspondence, as well as McKay's original work \cite{McKay}. Moreover, a very nice review 
of the McKay correspondence and noncommutative crepant resolutions is given by Leuschke \cite{Le2}. Further, in this direction, different versions of a quantum McKay correspondence have been 
studied by many researchers, e.g. \cite{BGh, Ch, KO, Mo}. The goal of this paper 
(and the sequel \cite{PartII}) is to produce an analogue of the McKay correspondence in the context of finite dimensional Hopf actions on noncommutative regular algebras; this is applicable in both
noncommutative algebraic geometry and noncommutative invariant theory. 

\smallskip

Consider the setting below.

\begin{hypothesis}\label{yyhyp0.1}  
Unless indicated otherwise, $H$ is a 
semisimple  Hopf algebra (hence finite 
dimensional), $A$ is an {\em Artin-Schelter \textnormal{(}AS\textnormal{)} regular algebra} 
[Definition~\ref{yydef1.1}] generated in degree one, which by definition 
is connected graded. Further, $A$ is  a {\em graded $H$-module algebra} 
[Definition \ref{yydef1.3}] under an $H$-action that is 
{\em inner faithful} [Definition \ref{yydef1.5}] with {\em trivial 
homological determinant} [Definition \ref{yydef1.6}].  We also assume that $A^H \neq A$ throughout.\end{hypothesis}

When $A$ is commutative, it is a commutative polynomial 
ring, and when $H$ is cocommutative, it is the group algebra of a finite 
subgroup of $SL_n(\kk)$, with $G$ acting as graded automorphisms of $A$. 
Hence our non(co)commutative setting generalizes the setting 
of the classical McKay correspondence.

\smallskip

This paper can be viewed as a continuation of the project started in 
\cite{CKWZ}, where the finite dimensional Hopf actions on AS regular 
algebras of dimension 2 with trivial homological determinant were classified.  
The AS regular algebras of dimension 2 are isomorphic to one of the two 
families of algebras given in Example~\ref{yyex1.2}.  The classification 
of such semisimple Hopf actions on AS regular algebras of 
dimension~2 is repeated in 
Theorem \ref{yythm0.4} below.

\smallskip

A key ingredient in the classical McKay correspondence is Auslander's
Theorem, which states that if $G$ is a small finite group acting on a
commutative polynomial ring $S$, then $S\# \Bbbk G$ is isomorphic to
$\End_{S^G}(S)$ as algebras
\cite[Proposition~3.4]{Aus62} (see also \cite[Theorem~5.12]{LW}). We conjecture that a version of this result
holds for semisimple Hopf actions on noetherian AS regular algebras. 

\begin{conjecture} 
\label{yycon0.2} 
Let $H$ be a semisimple Hopf algebra and suppose that $A$ is a noetherian AS regular $H$-module algebra satisfying Hypothesis~\ref{yyhyp0.1}. Then there is a natural 
graded algebra isomorphism 
\begin{equation}
\label{E0.2.1}\tag{E0.2.1}
A\#H \cong \End (A_{A^H}).
\end{equation}
\end{conjecture}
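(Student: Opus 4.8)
The plan is to produce the obvious candidate for \eqref{E0.2.1} and prove it bijective. Left multiplication makes $A$ a left module over itself and the module--algebra structure makes it a left $H$-module; these assemble into a left $A\#H$-module structure via $(a\#h)\cdot b=a(h\cdot b)$. Since $H$ acts on the subring $A^H$ through the counit, the identity $h\cdot(ab)=\sum(h_1\cdot a)\,\epsilon(h_2)\,b=(h\cdot a)\,b$ for $b\in A^H$ shows this $A\#H$-action is right $A^H$-linear, so it gives a homomorphism of graded algebras
\[
\gamma\colon A\#H\longrightarrow\End(A_{A^H}),\qquad \gamma(a\#h)(b)=a\,(h\cdot b),
\]
which specializes to the classical Auslander map when $H=\kk G$. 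The conjecture is exactly that $\gamma$ is an isomorphism, so it remains to prove injectivity and surjectivity.

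Injectivity is the soft part: the kernel of $\gamma$ is the annihilator of the module ${}_{A\#H}A$, and it is standard that a finite-dimensional Hopf algebra acting inner-faithfully on a connected graded domain $A$ makes $A$ a faithful $A\#H$-module --- and noetherian AS regular algebras are domains, at least in the dimension-two range of interest here. Hence $\gamma$ is injective, and the real issue is surjectivity.

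For surjectivity I would work with the averaging idempotent. Put $R=A\#H$ and let $e=\Lambda\in H\subseteq R$ be the normalized two-sided integral of the semisimple Hopf algebra $H$, so that $e^2=e$ and $he=eh=\epsilon(h)e$. Routine smash-product computations give $eRe\cong A^H$ as graded algebras and $Re\cong A$ as graded $(A,A^H)$-bimodules, and under these identifications $\gamma$ becomes the canonical map $R\to\End_{eRe}\big((Re)_{eRe}\big)$ attached to the idempotent $e$. Invoking the Bao--He--Zhang pertinency formalism in its Hopf-action form --- for which one uses that trivial homological determinant makes $A^H$ AS Gorenstein, so the underlying Cohen--Macaulay bookkeeping applies --- reduces the conjectured isomorphism to the single inequality
\[
\mathsf{p}(A,H):=\GKdim R-\GKdim\big(R/ReR\big)\ge 2,
\]
where $ReR$ is the two-sided ideal of $R$ generated by $e$. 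Since $R$ is a finite module over $A$ one has $\GKdim R=\GKdim A=\gldim A$, so when $\gldim A=2$ the requirement is exactly that $A\#H/ReR$ be finite-dimensional --- equivalently, that the trace ideal of $A$ over $A^H$ have finite codimension.

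This inequality $\mathsf{p}(A,H)\ge 2$ is the heart of the matter, and the reason \eqref{E0.2.1} remains open in general: it is the precise sense in which $(A,H)$ should behave like a small group acting without quasi-reflections. In dimension two it can be forced, in one of two ways. The reliable route is to run through the classification of semisimple Hopf actions on AS regular algebras of dimension two with trivial homological determinant (Theorem~\ref{yythm0.4}): only finitely many actions occur, on the two families of Example~\ref{yyex1.2}, and for each of them one computes the ideal $ReR$ and verifies by hand that $A\#H/ReR$ is finite-dimensional. The more conceptual route --- which I would try first, and which would also bear on higher dimensions --- is to show that inner faithfulness together with triviality of the homological determinant forces the fixed ring $A^H$ to have an \emph{isolated singularity}, and then to extract finite-dimensionality of $A\#H/ReR$ from this. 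Its delicate point is to attach a workable meaning to this notion of isolated singularity --- or to freeness of the $H$-action away from the irrelevant ideal --- for a genuine Hopf action rather than a group action; the classification argument sidesteps that issue entirely. In either case, once $\mathsf{p}(A,H)\ge 2$ is established the reduction above yields \eqref{E0.2.1}, and with it the identification of $A^H$ as an analogue of the coordinate ring of a Kleinian singularity.
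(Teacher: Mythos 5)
Your reduction is the right one and is in fact the paper's: the map $\gamma(a\#h)(b)=a(h\cdot b)$ is the natural map, and the criterion you extract from the Bao--He--Zhang formalism is exactly Lemma~\ref{yylem4.4} of the paper ($A\#H\cong\End_{(A^H)^{\op}}(A)$ if and only if $(A\#H)/(1\#t)$ is finite dimensional, $t$ the integral). You are also right that the conjecture is only being proved for $\gldim A=2$ and that the classification of Theorem~\ref{yythm0.4} is the vehicle. But at that point your proposal stops, and the gap is that the finite-dimensionality of $(A\#H)/ReR$ in each of the eight cases \emph{is} the proof; it occupies all of Sections~3 and~4. For the group cases ({\tt c}) and ({\tt d}) the "verify by hand" computation is genuinely nontrivial (Lemma~\ref{yylem4.6} manufactures $u^{2n+1}\#1, v^{2n+1}\#1\in I$ through a chain of identities in $\kk_{-1}[u,v]\#\kk D_{2n}$), and you give no indication of how it would go.

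More seriously, for cases ({\tt e}) and ({\tt f}) the "compute $ReR$ directly" plan is not what works, and your fallback (an isolated-singularity criterion) is, as you concede, not yet a definition. In case ({\tt e}), $H=(\kk D_{2n})^\circ$, the integral is the idempotent $p_{g_1}$ and the paper exploits the induced $G$-grading on $A$ (Lemma~\ref{yylem4.8}) rather than any annihilator computation. In case ({\tt f}), $H=\mc{D}(\widetilde{\Gamma})^\circ$ is neither a group algebra nor a dual group algebra, and the paper needs two additional ideas you do not anticipate: (i) invariance of the Auslander property under cocycle cotwists (Lemma~\ref{yylem4.12}(9)), which disposes of the type-$\mathbb{D}$ members by twisting back to case ({\tt d}); and (ii) for type $\mathbb{E}$, the identification $A^{(2)}\cong\kk[u,v]^{(2)}$ as graded $\Gamma$-module algebras via the McKay-quiver/representation-theoretic analysis of Section~3, combined with the transfer result Lemma~\ref{yylem4.14} showing $(A\#H)/(1\#t)$ is finite dimensional if and only if $(A^{(2)}\#\kk\Gamma)/(1\#t')$ is, which reduces everything to the commutative Auslander theorem. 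Without these your outline does not close in the cases that are the actual point of the theorem. (Minor additional caveats: injectivity of $\gamma$ is obtained in the paper from primeness of $A\#H$ via Lemma~\ref{yylem4.10}, not from a bare "inner faithful on a domain implies faithful module" claim, and in any event it is subsumed in the equivalence of Lemma~\ref{yylem4.4}, so you need not argue it separately.)
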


Our main result is to show that Conjecture~\ref{yycon0.2} is true 
in dimension~2, which answers \cite[Question 8.5]{CKWZ}. 

\begin{theorem}[Theorem~\ref{yythm4.1}] 
\label{yythm0.3}
If $A$ is a noetherian AS regular algebra of dimension~$2$, then  
Conjecture~{\rm{\ref{yycon0.2}}} is true.
\end{theorem}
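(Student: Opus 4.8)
The strategy is to verify \eqref{E0.2.1} directly, using the explicit classification of the pairs $(A,H)$ in dimension two (Theorem~\ref{yythm0.4}) together with a general ring-theoretic criterion that reduces the isomorphism to a few homological conditions. First I would set $B = A^H$ and construct the canonical graded algebra map $\Phi\colon A\#H \to \End(A_B)$ sending $a\#h$ to the endomorphism $x \mapsto a(h\cdot x)$; this is well defined because the $B$-module structure on $A$ commutes with the $H$-action by definition of $A^H$, and it is a graded map of the appropriate degree. The goal is to show $\Phi$ is bijective. Since $H$ is semisimple and acts with trivial homological determinant, \cite{CKWZ} gives that $B = A^H$ is again AS regular (in fact AS Gorenstein of dimension two, an analogue of a Kleinian singularity coordinate ring as the abstract asserts), $A$ is a finitely generated graded $B$-module, and $A$ is a maximal Cohen--Macaulay $B$-module. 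These are exactly the hypotheses under which one expects an Auslander-type theorem.

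The key step is a dimension/Hilbert-series count combined with injectivity. For injectivity: $\Phi$ is a map of $A\#H$-bimodules (or at least of left $A$-modules and right $H$-comodules in the graded sense), $A\#H$ is prime of GK-dimension two by the results quoted from \cite{CKWZ}, and any nonzero two-sided ideal of $A\#H$ has full GK-dimension two; so it suffices to check that $\ker\Phi$ has GK-dimension strictly less than two, which follows because $\Phi$ is injective on the degree-one part (as the action is inner faithful, $H$ embeds into $\End_\kk(A_1 \oplus \cdots)$ compatibly) and $A\#H$ is generated in low degrees. For surjectivity: one computes Hilbert series. We have $h_{A\#H}(t) = (\dim_\kk H)\, h_A(t)$, while $\End(A_B)$ in each degree can be computed from the $B$-module structure of $A$; using that $A$ is a rank-one reflexive (indeed MCM) $B$-module and that $B$ is AS Gorenstein of dimension two, a local-duality / depth argument shows $h_{\End(A_B)}(t) = h_{A\#H}(t)$. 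Once $\Phi$ is an injective graded map between (left $A$-)modules with equal Hilbert series and the target is known to be finitely generated, $\Phi$ is an isomorphism.

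An alternative, more structural route — which may be cleaner than the case analysis — is to invoke the general machinery for "Auslander-type" conditions: show that $A$ is a \emph{generator} over $B=A^H$ (because the trace map $A \to B$ splits since $H$ is semisimple, so $B$ is a $B$-module direct summand of $A$) and that $A\#H$ is a maximal order / is homologically smooth of the right dimension, then apply a depth-two descent (reflexive modules over the normal domain $B$ are determined in codimension one, and $\Phi$ is visibly an isomorphism after localizing at the generic point of $\Spec B$ since there the action is "free"). The equality in codimension one plus reflexivity of both sides as $B$-modules forces equality globally. Either way, the computations needed are: (i) $A^H$ is AS Gorenstein of dimension two; (ii) $A$ is MCM and a generator over $A^H$; (iii) $\Phi$ is an isomorphism at the generic point.

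The main obstacle I anticipate is \textbf{surjectivity of $\Phi$}, equivalently showing that every $B$-linear endomorphism of $A$ is realized by an element of $A\#H$. In the commutative group case this is Auslander's original argument using that $S$ is free over $S^G$ on the nose after inverting the discriminant plus a reflexivity/purity argument; in the Hopf setting $A$ need not be free over $B=A^H$, only MCM, so one must replace "free" by a depth-two reflexivity argument and control what happens along the (possibly empty, in the "small" case) ramification locus. Handling this uniformly — rather than algebra-by-algebra through the classification of Theorem~\ref{yythm0.4} — is where the real work lies; the fallback is to push the Hilbert-series computation through each family in Example~\ref{yyex1.2} explicitly, using the known decomposition of $A$ into $H$-isotypic components.
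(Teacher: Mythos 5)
Your overall frame is the right one, and in fact matches the abstract criterion the paper imports from \cite{BHZ} (Lemma~\ref{yylem4.4}): the canonical map $A\#H\to\End_{(A^H)^{\op}}(A)$ is an isomorphism if and only if $(A\#H)/(1\#t)$ is finite dimensional, where $t$ is the integral of $H$. But the proposal has one false ingredient and one decisive gap. The false ingredient: $A^H$ is \emph{not} AS regular --- Theorem~\ref{yythm0.5} asserts precisely the opposite; it is only AS Gorenstein (a hypersurface $C/C\Omega$ in a three-dimensional AS regular algebra), so any step leaning on regularity of $B=A^H$ must be reworked. Also, your injectivity argument assumes $A\#H$ is prime (not verified in the cases at hand) and then claims $\ker\Phi$ has small GK-dimension ``because $\Phi$ is injective on the degree-one part,'' which does not follow: faithfulness on $A_1$ gives no bound on the GK-dimension of a two-sided ideal.

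The decisive gap is surjectivity. The assertion that ``a local-duality / depth argument shows $h_{\End(A_B)}(t)=h_{A\#H}(t)$'' is not an argument; given injectivity, that Hilbert-series equality \emph{is} the theorem. Similarly, the codimension-one/reflexivity route requires knowing that $\Phi$ is an isomorphism away from a ``small'' locus, i.e.\ that $(A\#H)/(1\#t)$ is finite dimensional --- the pertinency condition --- and this is exactly where all the work lies. The paper has no uniform proof of it (which is why Conjecture~\ref{yycon0.2} remains open in higher dimension); instead it runs through the classification of Theorem~\ref{yythm0.4}: case ({\tt a}) is classical Auslander, cases ({\tt b}),({\tt g}),({\tt h}) are Mori--Ueyama, cases ({\tt c}),({\tt d}) are handled by explicit element manipulations showing $u^N\#1,\,v^N\#1\in(1\#t)$ (Lemma~\ref{yylem4.6}), case ({\tt e}) by reinterpreting the dual-group-algebra action as a $G$-grading (Lemma~\ref{yylem4.8}), and case ({\tt f}) by cocycle-twist equivalences (Lemma~\ref{yylem4.12}) together with a descent to the second Veronese subring and the commutative case (Lemma~\ref{yylem4.14}), which in turn depends on the Section~\ref{yysec3} identifications $A^{(2)}\cong\kk[u,v]^{(2)}$ in types $\mathbb D$ and $\mathbb E$. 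Your fallback sentence acknowledges this case analysis might be needed, but none of it is carried out, so as written the proposal establishes neither injectivity nor surjectivity.
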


In \cite{CKWZ} we gave the classification of 
pairs $(A,H)$, where $H$ is a finite dimensional Hopf algebra, and $A$ 
is an AS regular algebra of global dimension $2$, generated in degree 
one, that admits a graded, inner faithful $H$-action with trivial homological determinant. We call such a 
Hopf algebra $H$ a {\it quantum binary polyhedral group}. The pairs $(A,H)$, 
where $H$ is semisimple, are described in the following theorem, on which 
the proof of Theorem \ref{yythm0.3} is based.

\begin{theorem} \textnormal{(\cite[Theorem~0.4]{CKWZ}).}
\label{yythm0.4} 
Suppose that $(A,H)$ is a pair satisfying Hypothesis~\ref{yyhyp0.1}  with $\gldim A =2$.  Then $(A,H)$ is one of the following. 

\[ \begin{array}{|c|l|l|} \hline \rm{Case} & A & H\\ 
\hline \hline 
{\tt{(a)}} & \kk[u,v] & \kk \widetilde{\Gamma}~{\rm for}~\textcolor{white}{\hat {\textcolor{black}{\widetilde{\Gamma}}}}
{\rm ~a~finite~subgroup~of~} \mathrm{SL}_2(\kk) \\ \hline 
{\tt{(b)}} &\kk_{-1}[u,v] & \kk C_n~{\rm for}~n\geq 2 \hfill {\rm (diagonal~action)}  \\ 
{\tt{(c)}} &\kk_{-1}[u,v]& \kk C_2 \hfill  {\rm (nondiagonal~action)}\\ 
{\tt{(d)}} &\kk_{-1}[u,v]& \kk D_{2n} ~{\rm for}~n\geq 3 \\ 
{\tt{(e)}} &\kk_{-1}[u,v]& 
(\kk D_{2n})^{\circ}~{\rm for}~n\geq 3  \\ 
{\tt{(f)}} &\kk_{-1}[u,v]&\mc{D}(\widetilde{\Gamma})^{\circ} 
~{\rm for}~\widetilde{\Gamma}{\rm ~a~finite~nonabel.~subgroup~of~} \mathrm{SL}_2(\kk) \\
\hline 
{\tt{(g)}} &\kk_q[u,v]~{\rm with}~ q^2 \neq 1 & \kk C_n~{\rm for}~n
\geq 2  \hfill  {\rm (diagonal~action)}\\ \hline 
{\tt{(h)}} &\kk_J[u,v] & \kk
C_2  \hfill {\rm (diagonal~action)} \\ \hline \end{array} \]

\medskip

\begin{center}
\small{\textsc{Table~1.} {\rm Quantum binary polyhedral groups and their module algebras}}
\end{center}

\bigskip

\noindent
Here $C_n$ is a cyclic group of order $n$, $D_{2n}$ is
a dihedral group of order $2n$, and $\mathcal{D}(\widetilde{\Gamma})$ 
is a cocycle deformation 
of $(\kk\widetilde{\Gamma})^\circ$ \cite{Ma}. 
The $H$-action on $A$ is defined by 
the $H$-module {\rm{(}}or equivalently, the $H^{\circ}$-comodule{\rm{)}} 
structure of $A_1 =\kk u \oplus \kk v$, as~given~below.
\begin{itemize}
{\setlength\itemindent{6pt} \item[({\tt a,d})]
$A_1$ is a faithful simple $2$-dimensional $G$-module
with $G$= $\widetilde{\Gamma}$ or $D_{2n}$, resp.}
\smallskip

{\setlength\itemindent{6pt} \item[({\tt b,g,h})] 
$\kk u$ is a faithful $C_{n}$-module and $\kk v$ is its dual.}
\smallskip

{\setlength\itemindent{6pt} \item[({\tt c})] 
$\kk (u+v)$ is the trivial $C_{2}$-module, and $\kk (u-v)$ is the sign 
representation.}
\smallskip

{\setlength\itemindent{6pt}\item[({\tt e})] 
Let $v_1 = u+\sqrt{-1} v$ and $v_2=\sqrt{-1}u+v$. Then $\kk v_1$ 
{\rm{(}}resp. $\kk v_2${\rm{)}} is 
\linebreak \textcolor{white}{.}~$1$-dimensional $\kk D_{2n}$-comodule with 
structure map $\rho (v_1) = v_1 \otimes g$ {\rm{(}}resp.
\linebreak \textcolor{white}{.}~$\rho (v_2) = v_2 
\otimes h${\rm{)}}, where $D_{2n} = \langle g,h~|~ g^{2} =h^{2} =(gh)^{n} =1 \rangle$.}
\smallskip

{\setlength\itemindent{6pt} \item[({\tt f})] 
$\mc{D} (\widetilde{\Gamma})$ is a finite dimensional Hopf 
quotient of the quantum special linear 
\linebreak \textcolor{white}{.}~group $\mathcal{O}_{-1} (SL_{2} ( \kk ))$ generated by 
$(e_{i,j} )_{1 \leq i,j \leq 2}$, which coacts on $A_1$ from 
\linebreak \textcolor{white}{.}~the right via
$\rho (u) =u \otimes e_{1,1} +v \otimes e_{2,1}$ and $\rho (v) =u \otimes
e_{1,2} +v \otimes e_{2,2}$.}
\end{itemize}

\vspace{-.15in} \qed
\end{theorem}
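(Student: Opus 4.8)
The statement of Theorem~\ref{yythm0.4} is a classification, so the plan is to encode Hypothesis~\ref{yyhyp0.1} as linear-algebraic data on the degree-one component of $A$ and then run a case analysis organized by the algebra $A$. First I would reduce to the $H$-module $V := A_{1}$: since $A$ is generated in degree one and the $H$-action is graded, the graded $H$-module-algebra structure is completely determined by the $2$-dimensional $H$-module $V$ together with the condition that the defining relation of $A$ span an $H$-submodule $R \subseteq V \otimes V$. By Example~\ref{yyex1.2}, $A$ is either $\kk_{q}[u,v]$ with relation $vu - quv$ for some $q \in \kk^{\times}$, or the Jordan plane $\kk_{J}[u,v]$; in every case $R$ is one-dimensional, and for AS regular algebras of global dimension two the triviality of the homological determinant [Definition~\ref{yydef1.6}] is precisely the statement that $H$ acts on $R$ through the counit, i.e. that $R$ is the trivial $H$-module. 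So the problem becomes: for each $A$, classify the inner-faithful $2$-dimensional $H$-modules $V$ whose induced action on the tensor algebra of $V$ fixes $R$ pointwise, and then discard the cases with $A^{H} = A$.

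Next I would pass to the dual picture. An inner-faithful action of this type amounts to a surjection of Hopf algebras from the universal Hopf algebra $\mathcal{U}_{A}$ coacting on $A$ by degree-preserving transformations with trivial homological codeterminant, onto $H^{\circ}$; since $H$ is semisimple exactly when $H^{\circ}$ is, the task is to classify the finite-dimensional semisimple Hopf quotients of $\mathcal{U}_{A}$ and dualize. For $A = \kk_{q}[u,v]$ one has $\mathcal{U}_{A} \cong \mathcal{O}_{q}(\mathrm{SL}_{2}(\kk))$. When $q = 1$, this is the commutative ring $\mathcal{O}(\mathrm{SL}_{2}(\kk))$, whose finite Hopf quotients in characteristic zero are precisely the $\mathcal{O}(\widetilde{\Gamma})$ for finite $\widetilde{\Gamma} \leq \mathrm{SL}_{2}(\kk)$; dualizing gives $H = \kk\widetilde{\Gamma}$, which is Case~(a). (Equivalently one invokes the rigidity fact that a semisimple Hopf algebra acting inner-faithfully and gradedly on a commutative polynomial ring must be a group algebra, after which trivial homological determinant becomes ordinary determinant one.) When $q^{2} \neq 1$, I would argue that the relation $vu - quv$, being neither symmetric nor antisymmetric, is rigid enough to force the action to be diagonal in the coordinates $u$ and $v$ (a short computation with the quadratic monomials of $V \otimes V$, treating the simple and split cases for $V$), so that inner faithfulness and trivial homological determinant give $H \cong \kk C_{n}$, which is Case~(g). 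For the Jordan plane, whose graded symmetries are very limited, a direct analysis shows $V$ must be a sum of two one-dimensional submodules carrying the same character $\chi$, that trivial homological determinant forces $\chi^{2} = \epsilon$, and hence that $H \cong \kk C_{2}$, which is Case~(h).

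The crux, which I expect to be the main obstacle, is $A = \kk_{-1}[u,v]$, where $\mathcal{U}_{A} \cong \mathcal{O}_{-1}(\mathrm{SL}_{2}(\kk))$ is noncommutative. The key structural fact is that it is a $2$-cocycle twist of $\mathcal{O}(\mathrm{SL}_{2}(\kk))$, and this is exactly why it admits a rich family of genuinely noncommutative, noncocommutative semisimple finite-dimensional Hopf quotients. I would classify these by transporting the (easy) classification of finite Hopf quotients of $\mathcal{O}(\mathrm{SL}_{2}(\kk))$ across the twist, tracking which quotients inherit the cocycle, and then dualize and sort the outcomes together with the appropriate $2$-cocycle deformations: the torus quotients, which via the dichotomy ``$V$ simple or split'' account for the diagonal and the nondiagonal cyclic actions (Cases~(b) and~(c)); the dihedral group algebras $\kk D_{2n}$ acting through their faithful simple $2$-dimensional module (Case~(d)); their duals $(\kk D_{2n})^{\circ}$ (Case~(e)); and the cocycle deformations $\mathcal{D}(\widetilde{\Gamma})^{\circ}$ of $(\kk\widetilde{\Gamma})^{\circ}$ for finite nonabelian $\widetilde{\Gamma} \leq \mathrm{SL}_{2}(\kk)$ (Case~(f)). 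That $\kk D_{2n}$ and $(\kk D_{2n})^{\circ}$ occur here even though $D_{2n}$ does not embed in $\mathrm{SL}_{2}(\kk)$ is a reflection of the fact that for $\kk_{-1}[u,v]$ the homological determinant differs from the ordinary determinant.

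Finally I would verify exhaustiveness and irredundancy: that the abstract list of semisimple finite Hopf quotients of $\mathcal{O}_{-1}(\mathrm{SL}_{2}(\kk))$ matches bijectively, up to isomorphism of the pairs $(A,H)$, with the Hopf algebras named in Table~1, and that inner faithfulness together with $A^{H} \neq A$ holds in each family and rules out nothing further. Essentially all of the genuine difficulty lies in the preceding paragraph --- identifying and organizing the finite ``quantum binary polyhedral'' quotients at $q = -1$; the reductions for $q \neq -1$ and the remaining bookkeeping with characters and quadratic monomials are routine.
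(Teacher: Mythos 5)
This theorem is not proved in the present paper at all: it is imported verbatim from \cite[Theorem~0.4]{CKWZ} (note the \textnormal{qed} placed immediately after the statement), so there is no in-paper proof to compare against. Measured against the proof in that reference, your outline reconstructs the correct strategy: reduce to the $H$-module $A_1$ together with the one-dimensional relation space $R\subseteq A_1\otimes A_1$, observe that for a quadratic AS regular algebra of dimension $2$ trivial homological determinant is exactly triviality of the $H$-action on $R$, dualize to finite-dimensional cosemisimple Hopf quotients of the universal quantum group of $A$ with trivial homological codeterminant, use the rigidity of semisimple Hopf actions on commutative domains for case ({\tt a}), direct character computations for $q^2\neq 1$ and the Jordan plane, and the $2$-cocycle-twist relation between $\mathcal{O}_{-1}(SL_2(\kk))$ and $\mathcal{O}(SL_2(\kk))$ for the $q=-1$ cases. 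The one place where your sketch materially under-sells the work is the sentence ``transport the classification across the twist'': this requires (i) the fact that a $2$-cocycle twist induces a bijection on Hopf algebra quotients, (ii) the bookkeeping of which twisted function algebras $\mathcal{O}(\widetilde\Gamma)^{\sigma}$ come out commutative, cocommutative, or genuinely quantum (this is how ({\tt b})--({\tt f}) separate), and (iii) the fact that inequivalent restrictions of the cocycle to a fixed $\widetilde\Gamma$ can yield non-isomorphic Hopf algebras (e.g.\ the two deformations $\mathcal{A}(\widetilde D_{2n})$ and $\mathcal{B}(\widetilde D_{2n})$ that the present paper must distinguish in Section~\ref{yysec3}). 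That is precisely the content of the Bichon--Natale/Masuoka classification on which \cite{CKWZ} relies, so your proposal is an accurate roadmap of the cited proof rather than a self-contained argument.
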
 

We also have the following result for the fixed subrings $A^H$. 
These rings are referred to as {\it quantum
Kleinian singularities} [Definition~\ref{yydef5.1}].

\begin{theorem} \textnormal{(\cite[Theorem~0.6]{CKWZ} and Theorem~\ref{yythm5.2}).}
\label{yythm0.5}
Suppose that $(A,H)$ is a pair satisfying Hypothesis~\ref{yyhyp0.1} with $\gldim A =2$. Then $A^H$ is not AS regular, and is isomorphic to $C/C\Omega$, for $C$ an AS regular
algebra of dimension $3$ and $\Omega$ a normal element in $C$.
\end{theorem}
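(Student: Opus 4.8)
The proof has two parts: that $A^H$ is never AS regular, and that $A^H\cong C/C\Omega$ for $C$ an AS regular algebra of dimension $3$ and $\Omega$ a homogeneous normal element. The first part is \cite[Theorem~0.6]{CKWZ}; morally it records that an action with trivial homological determinant exhibits no quasi-reflection behaviour, so the noncommutative analogue of the Shephard--Todd--Chevalley/Watanabe picture forces the fixed ring to be singular. For the second (new) part, Theorem~\ref{yythm0.4} reduces us to the finite list of pairs $(A,H)$ in Table~1, and I would run through the eight lines (a)--(h).

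For each line the recipe is: first compute a minimal homogeneous generating set of $A^H$ from the $H$-module (equivalently $H^{\circ}$-comodule) structure on $A_1=\kk u\oplus\kk v$ described after Theorem~\ref{yythm0.4}; in every case it consists of three elements $f_1,f_2,f_3$ of degrees $d_1\le d_2\le d_3$. In line (a) this is classical Kleinian invariant theory, in (b)--(d) an elementary monomial computation in $\kk[u,v]$ or $\kk_{-1}[u,v]$, and in (e),(f) one works with the relevant coaction; for (f) one can transport the computation for $\kk_{-1}[u,v]\#\mathcal D(\widetilde\Gamma)^{\circ}$ to the classical invariants $\kk[u,v]^{\widetilde\Gamma}$ via the cocycle-twist dictionary for the defining cocycle of $\mathcal D(\widetilde\Gamma)$ (cf. \cite{Ma}). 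Then read off from the relations of $A$ the (skew-)commutation relations among $f_1,f_2,f_3$ together with a single further relation $\Omega=0$, where $\Omega$ is a homogeneous noncommutative polynomial in the $f_i$; let $C$ be the connected graded algebra on generators $x_1,x_2,x_3$ in degrees $d_i$ modulo only the commutation relations, and let $\Omega\in C$ also denote the lift of the extra relation, so there is a graded surjection $\pi\colon C\to A^H$ with $x_i\mapsto f_i$ and $\Omega\in\ker\pi$.

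Two things then have to be checked. (i) $C$ is AS regular of dimension $3$: in each case $C$ is a skew polynomial ring $\kk_{\mathbf p}[x_1,x_2,x_3]$, an Ore extension of a $2$-dimensional AS regular algebra, or one of the algebras on the Artin--Schelter/Artin--Tate--Van den Bergh list, so this is standard. (ii) $\Omega$ is normal in $C$, i.e. $\Omega x_i=\sigma(x_i)\Omega$ for a graded algebra automorphism $\sigma$ of $C$; this amounts to finitely many identities among the $x_i$, forced by the corresponding identities among the $f_i$ in $A$. Granting (i) and (ii), $\Omega$ is a normal nonzerodivisor in the domain $C$, so $\pi$ factors as $\bar\pi\colon C/C\Omega\to A^H$ and $H_{C/C\Omega}(t)=(1-t^{\deg\Omega})H_C(t)$; comparing with $H_{A^H}(t)$, computed from the graded trace / Molien-type formula for the $H$-action on $A$, shows $\bar\pi$ is an isomorphism. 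As a consistency check, $C/C\Omega$ is then automatically AS Gorenstein of GK-dimension $2$, as one expects for such a fixed ring.

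The main obstacle is step (ii) together with correctly identifying $C$ in the noncommutative lines, especially case (f): one must track the $(-1)$-skewing of $\kk_{-1}[u,v]$ and the cocycle at the same time so that the commutation parameters among the invariants come out exactly right and no defining relation is missed, since otherwise the Hilbert-series comparison fails. The dihedral cases (d),(e) and the Hopf case (f) carry the heaviest bookkeeping, and there I would lean as far as possible on the twist equivalence with the classical Kleinian singularities $\kk[u,v]^{\widetilde\Gamma}$ to import the presentation rather than recompute it.
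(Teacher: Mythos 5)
Your proposal follows essentially the same route as the paper: the non-regularity is quoted from \cite[Theorem~0.6]{CKWZ}, and the hypersurface presentation is obtained case-by-case from Table~1 by exhibiting generators and a single extra relation $\Omega$, checking that $C$ is AS regular of dimension $3$ and that $\Omega$ is normal, and closing the argument with a Hilbert-series comparison against the surjection $C/(\Omega)\twoheadrightarrow A^H$ --- exactly the paper's treatment of case ({\tt e}). Two small corrections to your setup: in case ({\tt c}) the fixed ring is minimally generated by \emph{two} elements, not three (there $C=\kk\langle a_1,a_2\rangle/([a_1^2,a_2],[a_2^2,a_1])$ with a degree-$6$ normal $\Omega$), so the claim ``three generators in every case'' must be relaxed; and the paper actually outsources the group cases ({\tt b}), ({\tt c}), ({\tt d}), ({\tt g}), ({\tt h}) to \cite{KKZ2} and \cite{KKZ5} rather than recomputing, while case ({\tt f}) is reduced to the commutative Kleinian singularities not by a direct cocycle-twist transport but via the second Veronese $A^{(2)}\cong\kk[u,v]^{(2)}$ as graded $\Gamma$-module algebras (Lemmas~\ref{yylem3.1}--\ref{yylem3.3} and \ref{yylem3.6}, using the McKay-quiver decompositions). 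Neither discrepancy affects the viability of your strategy.
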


We also define in a natural way, the {\it McKay quiver} of a semisimple Hopf 
action on a connected algebra $A$ in Definition~\ref{yydef2.3}. In the sequel to this work \cite{PartII}, we obtain a correspondence between this quiver and the {\it Gabriel quiver} defined in terms of projective $A\#H$-modules. Various correspondences between module categories over $A^H$, over $A\#H$, and over $\End(A_{A^H})$ are presented, as well, in \cite{PartII}.

\medskip

The paper is organized as follows.  Section~1 contains definitions 
and preliminary results. In Section~2, we define the McKay quiver as discussed above, and we recall 
a result from \cite{CKWZ} concerning the McKay quivers that can occur under our 
standing  hypotheses with $\gldim A =2$. Section~3 contains results on the fixed subrings arising from actions with McKay quiver of types $\mathbb{D}$ and $\mathbb{E}$; this is one of key steps 
in the proof of Theorem~\ref{yythm0.3}. Section 4 contains the complete
proof of our version of Auslander's Theorem  for semisimple Hopf actions on AS regular 
algebras of dimension 2 [Theorem~\ref{yythm0.3}] by considering the possible pairs $(A,H)$ 
that satisfy our standing hypotheses (from Theorem~\ref{yythm0.4}). In Section 5, we prove Theorem 
\ref{yythm0.5}.

%%%%%%%%%%%%%%                        Section  1 %%%%%%%%%%%%%%%%%%%%%%%
%%%%%%%%%%%%%%%%%%%%%%%%%%%%%%%%%%%%%%%%%%%%%%%%%%%%%%%%%%%%%%%%%%%%%%%%%%
%%%%%%%%%%%%%%%%%%%%%%%%%%%%%%%%%%%%%%%%%%%%%%%%%%%%%%%%%%%%%%%%%%%%%%%%%%%%%%%%%%%%%%%%%%%%%%%%%%%%%%%%%%%%%%%
%%%%%%%%%%%%%%%%%%%%%%%%%%%%%%%%%%%%%%%%%%%%%%%%%%%%%%%%%%%%%%%%%%%%%%%%%%%%%%%%%%%%%%%%%%%%%%%%%%%%%%%%%%%%%%%
%%%%%%%%%%%%%%%%%%%%%%%%%%%%%%%%%%%%%%%%%%%%%%%%%%%%%%%%%%%%%%%%%%%%%%%%%%%%%%%%%%%%%%%%%%%%%%%%%%%%%%%%%%%%%%%

\section{Preliminaries} 
\label{yysec1}

In this section, we provide background material on Artin-Schelter
regular algebras, Hopf algebra actions on graded algebras, 
and the homological determinant of such actions. 
Note that we usually work 
with left modules, and a right $A$-module is identified with 
a (left) $A^{\op}$-module, where $A^{\op}$ is the opposite ring of $A$. Let $A\mhyphen \Mod$ denote the category of left modules over an algebra $A$.
Moreover, for every ${\mathbb Z}$-graded algebra $A=\bigoplus_{i\in {\mathbb Z}}A_i$,
and for each positive integer $d$, the {\it $d$-th Veronese subring} of $A$ 
is defined to be
$A^{(d)}:=\bigoplus_{i\in {\mathbb Z}}A_{id}.$

\subsection{Artin-Schelter regular algebras} 
\label{yysec1.1}

An algebra $A$ is said to be {\it connected graded} if $A = \bigoplus_{i \in \mathbb{N}} A_i$ with $A_0 = \kk$ and  $A_i \cdot A_j \subseteq
A_{i+j}$ for all $i,j \in \mathbb{N}$, and a graded algebra is called {\it locally finite} if  $\dim_\kk A_i < \infty$ for all $i$. In this case, the {\it Hilbert series} of $A$
is defined to be 
$H_A(t):=\sum_{i \in \mathbb{N}} (\dim_\kk A_i) t^i.$ 

\smallskip

Here we consider a class of noncommutative
graded algebras that serve as noncommutative analogues of commutative
polynomial rings. These algebras are defined as follows.

\begin{definition} 
\label{yydef1.1} 
A connected graded algebra $A$ is called \emph{Artin-Schelter (AS) Gorenstein} if the following conditions hold:
\begin{enumerate} 
\item[(i)]
$A$ has finite injective dimension $d<\infty$ on both sides, 
\item[(ii)]
$\Ext^i_A(\kk, A) = \Ext^i_{A^{\op}}(\kk, A)= 0$ for
all $i \neq d$ where $\kk = A/A_{\geq 1}$, and 
\item[(iii)]
$\Ext^d_A(\kk, A)\cong \kk(\ell)$ and $\Ext^d_{A^{\op}} (\kk, A) 
\cong  \kk(\ell)$ for some integer $\ell$. 
\end{enumerate} 
The integer $\ell$ is called the \emph{AS index} of $A$. If moreover, 
\begin{enumerate} 
\item[(iv)] 
$A$ has finite global dimension $d$, 
\item[(v)] 
$A$ has finite Gelfand-Kirillov dimension, 
\end{enumerate} 
then $A$ is called \emph{Artin-Schelter (AS) regular} of dimension $d$. 
\end{definition}

%The classification of AS regular algebras of dimension 
%2, that are generated in degree one, is given below.

\begin{example} 
\label{yyex1.2} The AS regular algebras of global
dimension~2, that are generated in degree one, are listed below 
(up to isomorphism): 
\begin{enumerate} 
\item[(i)]
the {\it Jordan plane}: $\kk_J[u,v] := 
\kk \langle u,v \rangle/ (vu - uv - u^2)$, and 
\item[(ii)]
the {\it skew polynomial ring}: $\kk_q[u,v] := 
\kk \langle u,v \rangle/(vu - quv)$ for $q \in \kk^{\times}$. 
\end{enumerate} 
It is well-known that these algebras are noetherian domains. 
\end{example}

\subsection{Hopf algebra actions} 
\label{yysec1.2}
Throughout $H$ stands for a Hopf algebra over $\kk$ 
with structural notation $(H, m, u, \Delta, \epsilon, S)$, and we write $\Delta(h) =\sum h_1\otimes h_2$ (Sweedler notation). We
recommend \cite{Mo1} as a basic reference for the theory of Hopf
algebras.

\begin{definition} 
\label{yydef1.3}
Let $A$ be an algebra and $H$ a Hopf algebra. 
\begin{enumerate} 
\item[(1)] 
We say that $A$ is a  {\it {\rm{(}}left{\rm{)}} $H$-module algebra}, or that $H$ 
{\it acts on} $A$, if $A$ is an algebra in the category of left 
$H$-modules. Equivalently, $A$ is a left $H$-module such that
\begin{eqnarray*}
h(ab)=\sum h_1(a)
h_2(b) &\mathrm{and}& h(1_A) = \epsilon(h)1_A
\end{eqnarray*}
 for all $h\in
H$, and all $a, b\in A$. 

\smallskip

\item [(2)]
We say that $A$ is a {\it graded $H$-module algebra} if $A$ is an
algebra in the category of ${\mathbb N}$-graded (left) $H$-modules 
(with elements in $H$ having degree zero), or equivalently, 
each homogeneous component of $A$ is a left $H$-submodule.

\smallskip

\item[(3)] 
Given a $H$-module algebra $A$, the {\it smash product algebra $A\# H$} 
is  $ A\otimes H$ as a $\kk$-vector space, with 
multiplication defined by
\begin{eqnarray*}
(a\# h)(a'\# h') &=& \sum a h_1(a')\# h_2h'
\end{eqnarray*}
for all $h, h' \in  H$ and $a, a' \in  A$. 
\end{enumerate} 
\end{definition}

We identify $H$ (resp. $A$) with a subalgebra of $A\# H$ via the map $i_H :
h\mapsto 1\#h$ for all $h\in H$ (resp. the map $i_A : a \mapsto a\#1$ for all $a \in A$).

\smallskip

If $A$ is a graded $H$-module algebra, then $A\# H$
is graded with $\deg h=0$ for all $0\neq h\in H$ and $A$ is a graded
subalgebra of $A\# H$. If, further, $A$ is locally finite %(graded) 
and $H$ is finite dimensional, then $A\# H$ is locally finite and 
$(A\# H)_i=A_i\# H$ for all $i$. 

\begin{remark}
\label{yyrem1.4} 
One can form a right-handed version of the smash product
algebra, denoted by $H\# A$, for a right $H$-module algebra $A$. When
the antipode $S$ is bijective as assumed, these two versions can be 
exchanged  \cite[Lemma 2.1]{RRZ}. We work with the left-handed 
version of smash product and would like to remark that all 
statements can be transformed from the left-handed version to the 
right-handed one. 
\end{remark}

We want to restrict ourselves to Hopf ($H$-)actions that do not factor
through the action of a proper Hopf quotient of $H$.

\begin{definition} 
\label{yydef1.5} 
Let $M$ be a left $H$-module. We say that $M$ is an {\it inner faithful} 
$H$-module, or $H$ \emph{acts inner faithfully} on $M$, if $IM\neq 0$ 
for every nonzero Hopf ideal $I$ of $H$. The same terminology applies 
to $H$-module algebras $A$.
\end{definition}

We now recall the homological determinant of a Hopf algebra action on an
Artin-Schelter regular algebra below. Recall that by \cite[Corollary D]{LPWZ}, a
connected graded algebra $A$ is AS regular if and only if the $\Ext$-algebra 
$E=\bigoplus_{i\geq 0} \Ext^i_A(_A\kk,_A\kk)$ of $A$ is Frobenius. 

\begin{definition} \cite{KKZ2}
\label{yydef1.6} 
Retain the notation above. Let $A$ be an AS regular algebra with 
Frobenius $\Ext$-algebra $E$. Suppose ${\mathfrak e}$ is a 
nonzero element in $\Ext^d_A(_A\kk,_A\kk)$ where $d=\gldim A$.
Let $H$ be a Hopf algebra acting on $A$ from
the left. By \cite[Lemma 5.9]{KKZ2}, $H$ acts on $E$ from the left. 
\begin{enumerate}
\item[(1)] The
{\it homological determinant} of the $H$-action on $A$, denoted by $\hdet_H A$, is defined to be $\eta\circ S$ where $\eta: H\to k$ is
determined by 
$h\cdot {\mathfrak e} = \eta(h) {\mathfrak e}$.
\item[(2)] The
homological determinant is {\it trivial} if $\hdet_H A=\epsilon$.
\end{enumerate}
\end{definition}

In the case that $A$ is a commutative polynomial ring $\kk[x_1, \dots,
x_n]$, and $H = kG$, for $G$ a finite subgroup of $GL_n(k)$, $\hdet_HA$
becomes the ordinary determinant $\det_G : \kk G \rightarrow \kk$
\cite[Remark 3.4(c)]{KKZ2}. In this case, the homological determinant of the
$\kk G$-action on $A$ is trivial if and only if $G \leq
SL_n(\kk)$.

%%%%%%%%%%%%%%%%%  Section 2
%%%%%%%%%%%%%%%%%%%%%%%%%%%%
%%%%%%%%%%%%%%%%%%%%%%%%%%%
%%%%%%%%%%%%%%%%%%%%%%%%%%%%%%%%%
\section{McKay quivers} 
\label{yysec2} 

One of the most important objects in the McKay correspondence is the McKay 
quiver. In this section we present a version of such a diagram in the semisimple Hopf action
setting.  First, we begin with a discussion of fusion rings.

\smallskip

Given a Hopf algebra $H$ over $\kk$, we define a ring associated to a
set of nonisomorphic finite dimensional $H$-modules. 

\begin{definition} 
\label{yydef2.1}
Let $H$ be a Hopf algebra. Let $\{ [ V_{i} ] \}_{i \in J}$ be the set of all 
isomorphism classes of finite dimensional left $H$-modules, indexed by some
set $J$. We define the {\it Grothendieck ring $G_0(H):=G_0(_HH)$ of $H$} to be an 
associative ${\mathbb Z}$-algebra generated by $\{[V_i]\}_{i\in J}$, 
subject to relations $[V]=[U]+[W]$ for each short exact sequence of 
$H$-modules 
$$0\to U \to V\to W\to 0.$$ 
Then $G_0(H)$ has a ${\mathbb Z}$-basis $\{ [ V_{i} ] \}_{i \in I}$
consisting of isomorphism classes of all  finite dimensional simple left 
$H$-modules where $I$ is a subset of $J$. 
Moreover, for $i,j\in I$, the multiplication in $G_0(H)$ is 
determined by 
\begin{equation}
\label{E2.1.1}\tag{E2.1.1} 
[V_i]\cdot [V_j] = \sum_{k\in I} n_{i,j}^k [V_k],
\end{equation} 
where $n_{i,j}^k\in {\mathbb N}$ is the multiplicity of the finite
dimensional simple left $H$-module $V_k$
in the composition series of the $H$-module $V_i\otimes V_j$. The unit 
of $G_0(H)$ is $[\kk]$. 
\end{definition}

We arrive at our definition of a fusion ring.

\begin{definition} 
\label{yydef2.2} Let $F$ be an associative ring over
$\mathbb{Z}$, with multiplicative identity, equipped with an
augmentation map $\pi: F \rightarrow \mathbb{Z}$. We call $F$ a {\it fusion ring} if there is a
${\mathbb Z}$-module basis $\{X_i\}_{i\in I}$ of $F$, for some index set $I$, such that
\begin{enumerate} 
\item 
$1_F\in \{X_i\}_{i\in I}$; 
\item 
$\pi(X_i)\geq 1$, for all $i\in I$; and 
\item 
for all $i,j\in I$, we have $X_i 
X_j=\sum_{k\in I} n_{i,j}^k X_k$ for some $n_{i,j}^k \in {\mathbb N}$.
\end{enumerate} 
The coefficients $n_{i,j}^k$ are called the {\it fusion rule
coefficients}. 
\end{definition}

The Grothendieck ring $G_0(H)$ is an example of a fusion ring, where 
$X_i = [V_i]$ and $\pi([V_i]) = \dim_{\kk} V_i$ for $i\in I$.

\medskip

Now we present the definition of the McKay quiver 
for a semisimple Hopf algebra action on a connected graded 
algebra. This is achieved by defining the McKay quiver for an 
{\it effective} element $Y$ in a fusion ring $F$, where $Y$
is a nonzero element in $F$ so that $Y=\sum_{i\in I} \alpha_i X_i$
for some $\alpha_i\geq 0$.

\begin{definition} 
\label{yydef2.3} 
Let $F$ be a fusion ring with a 
${\mathbb Z}$-basis $\{X_i\}_{i\in I}$, and  let $Y\in F$ be an effective 
element. 
\begin{enumerate} 
\item[(1)] 
The \emph{right McKay quiver} ${\mathcal M}(Y)$ is defined as follows: 
\begin{enumerate} 
\item
the vertices of ${\mathcal M}(Y)$ are $X_i$, where $i\in I$, 
\item 
there are exactly $m_{ij}$ arrows from $X_i$ to $X_j$ in ${\mathcal M}(Y)$,
where $m_{ij}$  is determined by $X_j Y=\sum_{i\in I} m_{ij} X_i$
\end{enumerate} 
The \emph{left McKay quiver} $(Y){\mathcal M}$ is
defined similarly, where $m_{ij}$ is determined by $YX_j=\sum_{i\in I}
m_{ij} X_i$. 

\smallskip

\item[(2)] 
If $m_{ij}=m_{ji}$ for all $i,j\in I$, then the \emph{right 
Euclidean diagram}, 
still denoted by ${\mathcal M}(Y)$, is an undirected graph defined 
as follows:
\begin{enumerate}
\item
the vertices of ${\mathcal M}(Y)$ are $X_i$ where $i\in I$,
\item
the multiplicity of the edge connecting $X_i$ to $X_j$  is $m_{ij}$.
\end{enumerate}
The \emph{left Euclidean diagram} $(Y){\mathcal M}$ is defined 
similarly.

\smallskip

\item[(3)]
The connected component containing $1_F$ is called the \emph{principal 
component} of ${\mathcal M}(Y)$ (resp. $(Y){\mathcal M}$) and 
is denoted by ${\mathcal M}_0(Y)$ (resp. $(Y)_0{\mathcal M}$).

\smallskip

\item[(4)] 
The positive number $\pi(Y)$ [Definition \ref{yydef2.2}] is called the 
\emph{rank} of the McKay quiver ${\mathcal M}(Y)$, and of the 
principal component ${\mathcal M}_0(Y)$.
\end{enumerate} 
\end{definition}

If $F$ is noncommutative, then the left McKay quiver could be different
from the right one, although these two versions are isomorphic in many
natural examples. On the other hand, if $F$ is commutative, then there
is no difference between left and right McKay quivers.

\smallskip

The McKay quivers for the actions of the quantum binary polyhedral
groups on AS regular algebras of dimension 2 are discussed briefly 
in \cite[Section 7]{CKWZ}. We repeat that result below.

\begin{proposition}
\cite[Proposition~7.1]{CKWZ} 
\label{yypro2.4}
Let $H$ be a semisimple quantum binary polyhedral group that 
acts on an AS regular algebra $A$ of dimension 2 generated in 
degree one as in Theorem~{\rm{\ref{yythm0.4}}}. Take the 
effective element $Y$ to be the degree one graded piece $A_1$ 
of $A$ in the fusion ring $G_0(H)$. Then the left Euclidean diagrams
{\rm{(}}and hence the left McKay quivers, which have an arrow in both directions for each edge in the diagram {\rm{)}} 
$\mc{M}(Y)$ for these actions are given in the  table below. \qed

{\small 
$$\begin{array}{|c|c|c|c|c|c|c|c|c|} \hline {\rm Case} &~~
{\tt{(a)}}~~ &~~{\tt{(b)}}~~&~~{\tt{(c)}}~~&~~{\tt{(d)}}~~
&~~{\tt{(e)}}~~&~~{\tt{(f)}}~~&~~{\tt{(g)}}~~&~~{\tt{(h)}}~~\\ \hline
&&&&&&&&\\ \text{\begin{tabular}{c}{\rm Left}\\{\rm Euclidean}\\ {\rm diagram}\end{tabular}}
&\widetilde{\mathbb{A}}$-$\widetilde{\mathbb{D}}$-$\widetilde{\mathbb{E}}&\widetilde{\mathbb{A}}_{n-1}&\widetilde{\mathbb{L}}_1& 
\begin{cases} \widetilde{\mathbb{D}}_{\frac{n+4}{2}}, &
\hspace{-.1in} n~{\rm even}\\ \widetilde{\mathbb{DL}}_{\frac{n+1}{2}},
&\hspace{-.07in} n~{\rm odd} \end{cases}
&\widetilde{\mathbb{A}}_{2n-1}&\widetilde{\mathbb{D}}$-$\widetilde{\mathbb{E}}&\widetilde{\mathbb{A}}_{n-1}&
\widetilde{\mathbb{A}}_1\\ &&&&&&&&\\ \hline 
\end{array}
$$
}
\medskip
\begin{center}
\small{\textsc{Table~2.} {\rm Left McKay quivers for actions of quantum
binary polyhedral groups}}
\end{center}
\end{proposition}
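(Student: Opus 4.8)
The plan is to establish the proposition by working through, one at a time, the eight pairs $(A,H)$ enumerated in Theorem~\ref{yythm0.4}. In each case the fusion ring $F=G_0(H)$ is small and explicitly known, and the effective element $Y=[A_1]$ is pinned down by the $H$-module (equivalently $H^{\circ}$-comodule) structure on $A_1=\kk u\oplus \kk v$ recorded just after that theorem. So in each case I would (i) write $F$ with its ${\mathbb Z}$-basis of simple classes, (ii) express $Y$ in that basis, (iii) compute the products $Y[V_j]$ for all simple $V_j$ and extract the fusion rule coefficients $m_{ij}$, and (iv) identify the resulting graph with the Euclidean diagram named in Table~2.

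\textbf{The Grothendieck rings.} For $H=\kk\widetilde{\Gamma}$ (case \texttt{(a)}) and $H=\kk D_{2n}$ (case \texttt{(d)}), $F$ is the ordinary representation ring $R(\widetilde{\Gamma})$, resp.\ $R(D_{2n})$. For $H=\kk C_n$ (cases \texttt{(b)}, \texttt{(c)}, \texttt{(g)}, \texttt{(h)}), $F\cong {\mathbb Z}[x]/(x^n-1)$ with basis the characters $1,x,\dots,x^{n-1}$. For $H=(\kk D_{2n})^{\circ}$ (case \texttt{(e)}), an $H$-module is the same as a $D_{2n}$-graded vector space and the tensor product multiplies the group-degrees, so $F\cong {\mathbb Z}[D_{2n}]$ (the group ring), with basis the elements of $D_{2n}$. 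For $H=\mc{D}(\widetilde{\Gamma})^{\circ}$ (case \texttt{(f)}), I would exploit that $\mc{D}(\widetilde{\Gamma})$ is a cocycle deformation of $(\kk\widetilde{\Gamma})^{\circ}$ \cite{Ma} to match the Grothendieck ring and the class of $A_1$ with the data of the classical $\widetilde{\Gamma}$-action on $\kk[u,v]$.

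\textbf{The element $Y$ and the fusion computations.} In cases \texttt{(a)}, \texttt{(d)} (and, after the twist, \texttt{(f)}) $Y$ is the faithful simple $2$-dimensional module and the fusion rules are the classical ones; by McKay's theorem \cite{McKay} the diagram of case \texttt{(a)} is the affine $\widetilde{\mathbb{A}}$-$\widetilde{\mathbb{D}}$-$\widetilde{\mathbb{E}}$ diagram attached to $\widetilde{\Gamma}$, and that of case \texttt{(f)} is of type $\widetilde{\mathbb{D}}$ or $\widetilde{\mathbb{E}}$ since there $\widetilde{\Gamma}$ is nonabelian. For \texttt{(d)} one feeds in the standard decomposition $V_1\otimes V_k\cong V_{k+1}\oplus V_{k-1}$ of $D_{2n}$-modules, with $V_0$ and (when $n$ is even) $V_{n/2}$ splitting into one-dimensional modules: for $n$ even this gives a graph with a two-pronged fork at each end, namely $\widetilde{\mathbb{D}}_{(n+4)/2}$; for $n$ odd it gives a graph with a fork at the one-dimensional end and a loop at the middle module $V_{(n-1)/2}$ (because $V_{(n+1)/2}\cong V_{(n-1)/2}$ there), namely $\widetilde{\mathbb{DL}}_{(n+1)/2}$. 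In cases \texttt{(b)}, \texttt{(g)}, $Y=x+x^{-1}$, so $Yx^j=x^{j+1}+x^{j-1}$ and the diagram is the $n$-cycle $\widetilde{\mathbb{A}}_{n-1}$. In case \texttt{(h)}, $A_1$ is a sum of two copies of the sign module, so $Y=2[\mathrm{sgn}]$ and the diagram is two vertices joined by a double bond, i.e.\ $\widetilde{\mathbb{A}}_1$. In case \texttt{(c)}, $Y=[\mathrm{triv}]+[\mathrm{sgn}]$, so each of the two vertices acquires a loop and they are joined by an edge, i.e.\ $\widetilde{\mathbb{L}}_1$. In case \texttt{(e)}, $Y=[g]+[h]$ for the two involutions $g,h$ with $D_{2n}=\langle g,h\mid g^2=h^2=(gh)^n=1\rangle$, so the McKay quiver is the Cayley graph of $D_{2n}$ with respect to $\{g,h\}$, which is a $2n$-cycle, i.e.\ $\widetilde{\mathbb{A}}_{2n-1}$.

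\textbf{Symmetry and the main obstacle.} To conclude that each McKay quiver underlies an undirected Euclidean diagram one checks $m_{ij}=m_{ji}$: this is automatic when $F$ is commutative, and in the remaining cases \texttt{(a)}, \texttt{(d)}, \texttt{(f)} it holds because the defining module $A_1$ is self-dual (the subgroup lies in $\mathrm{SL}_2(\kk)$; the $2$-dimensional dihedral module is self-dual; self-duality survives the cocycle twist), while in case \texttt{(e)} it holds since $\{g,h\}$ is stable under inversion. One should also note that inner faithfulness of the $H$-action forces the principal component ${\mathcal M}_0(Y)$ to exhaust ${\mathcal M}(Y)$, so that the diagrams listed are indeed the full McKay quivers; as a consistency check, taking dimensions in $Y[V_j]=\sum_i m_{ij}[V_i]$ gives $2\dim V_j=\sum_i m_{ij}\dim V_i$, so $(\dim V_i)_i$ is a positive null vector of $2I-(m_{ij})$, which is exactly the condition forcing Euclidean type. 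The step I expect to require genuine care, rather than bookkeeping of finite-group characters, is case \texttt{(f)}: tracking how Masuoka's cocycle deformation affects the comodule (hence module) category of $\mc{D}(\widetilde{\Gamma})$ in order to identify its Grothendieck ring and the class of $A_1$ inside it.
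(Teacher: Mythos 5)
Your computation is essentially correct, but note that the paper itself offers no proof of this proposition: it is imported verbatim from \cite[Proposition~7.1]{CKWZ} with an immediate \verb|\qed|, so there is nothing in this document to compare against except the citation. Your case-by-case verification (reading off $G_0(H)$ and $Y=[A_1]$ from Theorem~\ref{yythm0.4}, computing $Y[V_j]$, and matching against the HPR list) is the natural way to check the table, and each of your eight computations is right, including the two subtler ones: identifying $G_0((\kk D_{2n})^{\circ})\cong{\mathbb Z}[D_{2n}]$ in case ({\tt e}) so that the left quiver becomes the Cayley graph of $D_{2n}$ on $\{g,h\}$, hence a $2n$-cycle; and reducing case ({\tt f}) to case ({\tt a}) via the cocycle twist, which is exactly the mechanism the paper later formalizes in Lemma~\ref{yylem4.12} (a cotwist preserves the Grothendieck ring and the class of $A_1$). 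One small inaccuracy: you assert that $m_{ij}=m_{ji}$ ``is automatic when $F$ is commutative.'' Commutativity of $F$ only makes the left and right quivers coincide (as Definition~\ref{yydef2.3} and the surrounding remark indicate); it does not force symmetry of $(m_{ij})$ --- take $Y=[x]$ in ${\mathbb Z}[x]/(x^n-1)$ for a counterexample. The correct uniform reason, in the commutative cases as well as in ({\tt a}), ({\tt d}), ({\tt f}), is self-duality of $Y$ (equivalently, the degree-two relation of $A$ spanning a trivial summand of $Y\otimes Y$). This does not affect your conclusions, since in every commutative case your explicit fusion computations already exhibit the symmetry, but the stated justification should be repaired.
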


\noindent 
We refer the reader to \cite{HPR} for illustrations of the
Euclidean diagrams listed above.

%%%%%%%%%%%%%%%%  Section 3
%%%%%%%%%%%%%%%%%%%%%%%%%%%%%%%%%%%%%%%
%%%%%%%%%%%%%%%%%%%%%%%%%%%%%%%%%%%%%%%
%%%%%%%%%%%%%%%%%%%%%%%%%%%%%%%%%%%%%
\section{Fixed subrings}
\label{yysec3}

The proof of Theorem \ref{yythm0.3} is delicate, and, among other 
things, requires a deep understanding of the fixed subrings 
in types $\mathbb{D}$ and $\mathbb{E}$. A direct calculation of the fixed 
subrings in types $\mathbb{D}$ and $\mathbb{E}$ is too complicated (or even impossible) 
in the noncommutative case, so our approach is to use the McKay quivers to 
convert the noncommutative case to the commutative case.

\subsection{Fixed subrings under Hopf actions of type $\mathbb{E}$}
\label{yysec3.1}  In this subsection we study type $\mathbb{E}$, In type $\mathbb{E}$ we have $A = \kk[u,v]$ 
or $\kk_{-1}[u,v]$ by Theorem~\ref{yythm0.4} and Proposition~\ref{yypro2.4}.
We will show that for Hopf algebras $H$ of type 
$\mathbb{E}$ in Theorem \ref{yythm0.4} we have that $\kk_{-1}[u,v]^H$ is 
isomorphic to $\kk[u,v]^{\widetilde{\Gamma}}$ for a corresponding 
subgroup $\widetilde{\Gamma}$ of SL$_2(\kk)$. Recall that $\widetilde{\Gamma}$ arises as a nonsplit central extension of $\mathbb{Z}_2$ by the corresponding polyhedral group~$\Gamma$: $0 \rightarrow \mathbb{Z}_2 \rightarrow \widetilde{\Gamma} \rightarrow \Gamma \rightarrow 0$.
\smallskip

By Theorem \ref{yythm0.4}, for each given 
diagram $\widetilde{\mathbb E}_i$, there are at most 
three Hopf algebras $H$ associated to it (see cases ({\tt a},{\tt f})
in Theorem \ref{yythm0.4}), and each Hopf algebra $H$ satisfies
\begin{equation}
\label{E3.0.1}\tag{E3.0.1}
\kk \to \kk {\mathbb Z}_2 \to H\to \kk \Gamma \to \kk
\end{equation}
where $\Gamma$ is the (unique) corresponding polyhedral group
of type $\widetilde{\mathbb E}_i$. 

\medskip

\noindent {\it Notation.} Let $\{M(d,j)\}_{j=1}^{s_d}$ be the complete set of isomorphism classes 
of simple left $H$-modules of dimension $d$, for some  $s_d\geq 0$.
Let 
$\kk \langle M(d,j)\rangle$ (resp. $\kk [M(d,j)]$) be the free 
algebra (resp. the commutative polynomial ring) generated by 
$M(d,j)$. 

\medskip

 For 
example, considering the diagram $\widetilde{\mathbb E}_6$ as in
 \cite[p.5]{HPR}, there are seven simple left $H$-modules
$${\text{$M(1,1), ~M(1,2), ~M(1,3), ~M(2,1), ~M(2,2), ~M(2,3), ~M(3,1)$.}}$$
We choose $M(1,1)$  to be the trivial module, and $M(2,1)$ to be 
the 2-dimensional simple module that is connected with $M(1,1)$. 
Using the notation in Section \ref{yysec2}, $Y=M(2,1)=\kk u\oplus 
\kk v$, which is the generating space of the AS regular algebra
$A$ of dimension two. Let $M(3,1)$ be the unique 3-dimensional simple
module that is connected with $M(2,1)$ in the diagram 
$\widetilde{\mathbb E}_6$.\\ 

\bigskip

\hspace{.4in} $\widetilde{\mathbb E}_6$:

\vspace{-.2in}

\begin{center}
\tikzset{elliptic state/.style={draw,ellipse}}
{\tiny 
\begin{tikzpicture}
[node distance=2cm, semithick,auto]
  \tikzstyle{every state}=[fill=white,draw=black, rounded rectangle, text=black]
  \node[state] at (0,0) (A)        {$M(3,1)$};
  \node[state] at (1.5,0) (B)        {$M(2,3)$};
  \node[state] at (-1.5,0) (C)        {$M(2,2)$};
  \node[state] at (0,1) (D)       {$Y=M(2,1)$};
  \node[state] at (3,0) (E)      {$M(1,3)$};
  \node[state] at (-3,0) (F)      {$M(1,2)$};
  \node[state,dotted] at (0,2) (G)   {$M(1,1)$};

  \path (A) edge              node {} (B)
            edge              node {} (C)
            edge              node {} (D)
        (B) edge              node {} (E)
        (C) edge              node {} (F)
        (D) edge              node {} (G);
\end{tikzpicture}
}
\end{center}

\bigskip

We have the following results.

\begin{lemma}
\label{yylem3.1} 
Suppose the McKay quiver corresponding to $(A,H)$ is $\widetilde{\mathbb E}_6$. Then the following statements hold.
\begin{enumerate}
\item[(1)]
$\kk\langle M(3,1)\rangle_2=
M(3,1)\otimes M(3,1)=M(3,1)^{\oplus 2}\oplus M(1,1)\oplus M(1,2)\oplus M(1,3)$ as $H$-modules.

\smallskip

\item[(2)]
$\kk[M(3,1)]_2=M(3,1)\oplus M(1,1)\oplus M(1,2)\oplus M(1,3)$ as $\Gamma$-modules.

\smallskip

\item[(3)]
There is a unique 1-dimensional subspace $\kk r$ of $\kk[M(3,1)]_2$, such that $A^{(2)}$ is 
isomorphic to $\kk[M(3,1)]/(r)$ as graded $\Gamma$-module algebras.

\smallskip

\item[(4)]
For each $H$ of type $\widetilde{\mathbb E}_6$, $A^{(2)}$
is isomorphic to $\kk[u,v]^{(2)}$ as graded $\Gamma$-module algebras.
As a consequence, $ A^H\; \cong \kk[u,v]^{\widetilde{\Gamma}}$ as graded 
algebras.
\end{enumerate}
\end{lemma}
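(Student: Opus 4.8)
The plan is to work out the representation theory of $H$ explicitly enough to pin down the tensor square of the $3$-dimensional simple $H$-module $M(3,1)$, and then transfer everything to the group $\Gamma$. First I would use the McKay quiver structure: the quiver $\widetilde{\mathbb{E}}_6$ records the fusion rule $M(3,1)\otimes Y = M(3,1)\otimes M(2,1)$, and since the vertices adjacent to $M(3,1)$ are $M(2,1),M(2,2),M(2,3)$ (with $M(3,1)$ adjacent to each via a single edge), I get $M(3,1)\otimes Y = M(2,1)\oplus M(2,2)\oplus M(2,3)$. Counting dimensions ($3\cdot 2 = 2+2+2$) this is forced. To reach part (1) I would then compute $M(3,1)\otimes M(3,1)$ by a dimension count combined with the observation that $M(1,1)$ appears in $M(3,1)\otimes M(3,1)$ with multiplicity exactly one (this is the general fact for a semisimple Hopf algebra: $\dim\Hom_H(\kk, V\otimes V) = \dim\Hom_H(V^*, V)$, and $M(3,1)$ is self-dual here since the quiver/$\Gamma$-representation theory forces it), together with the fact that the only simples of dimension $\le 3$ fitting into a $9$-dimensional module under the constraints coming from the fusion ring are $M(1,1),M(1,2),M(1,3)$ each once and $M(3,1)$ twice. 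The parity/symmetry constraint that the $2$-dimensional simples cannot occur is the delicate point: I would argue it using the central $\mathbb{Z}_2$ in \eqref{E3.0.1} — $M(3,1)$ is a $\Gamma$-module (the central element acts trivially), the $2$-dimensional simples are genuinely $\widetilde{\Gamma}$-modules (central element acts by $-1$), and a tensor product of two $\Gamma$-modules is a $\Gamma$-module, hence contains no $2$-dimensional $\widetilde{\Gamma}$-simple.

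Once (1) is in hand, (2) is immediate: the commutative polynomial ring in degree $2$ is the symmetric square $S^2 M(3,1)$, which is the summand of $M(3,1)^{\otimes 2}$ complementary to $\Lambda^2 M(3,1)$; since $\dim \Lambda^2 M(3,1) = 3$ and $\Lambda^2 V \cong V^* \otimes (\det V) \cong V$ for a $3$-dimensional module with trivial determinant (here $\hdet$ is trivial by Hypothesis~\ref{yyhyp0.1}, and $M(3,1)$ is self-dual), one copy of $M(3,1)$ is removed, leaving $\kk[M(3,1)]_2 = M(3,1)\oplus M(1,1)\oplus M(1,2)\oplus M(1,3)$ as $\Gamma$-modules. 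For (3): $A$ is AS regular of dimension $2$ generated by $Y = M(2,1)$ with a single quadratic relation, so its Veronese $A^{(2)}$ is generated by $A_2$; one checks $A_2$, as an $H$-module, is a quotient of $S^2 M(2,1)$ and can be identified with $M(3,1)$ — indeed the relation space of $A$ sits inside $M(2,1)^{\otimes 2}$ and the degree-two part $A_2$ has dimension $3$, and matching it against the quiver data (the neighbor of $Y$ other than the trivial module) gives $A_2 \cong M(3,1)$. Then $\kk[M(3,1)]$ surjects onto $A^{(2)}$, and comparing Hilbert series — $A^{(2)}$ has the Hilbert series of a commutative polynomial ring in $3$ variables modulo one quadric — the kernel is generated by a single element $r$ in degree $2$; since $\kk[M(3,1)]_2$ decomposes with each simple of multiplicity one, there is a unique $1$-dimensional $\Gamma$-submodule in any prescribed isotypic component, and the relation $r$ must span the (unique up to scalar) trivial summand $M(1,1)$, because $A^{(2)}$ is a domain and $r$ must be $\Gamma$-invariant so that $A^{(2)}$ is a $\Gamma$-module algebra. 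This identifies $\kk r$ uniquely and proves (3).

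For (4), the key point is that the conclusion of (3) holds verbatim for the commutative case $A = \kk[u,v]$ with the binary tetrahedral group $\widetilde{\Gamma}$ acting: there too $\kk[u,v]^{(2)} \cong \kk[M(3,1)]/(r')$ as graded $\Gamma$-module algebras, where $M(3,1)$ is the unique $3$-dimensional $\Gamma = A_4$-module and $r'$ spans the trivial summand of $\kk[M(3,1)]_2$. By uniqueness of $\kk r$ in (3) — the relation is forced to be, up to scalar, the unique element of the trivial isotypic component, which is the same subspace in both cases since the $\Gamma$-module structure on $\kk[M(3,1)]$ is intrinsic — we get $A^{(2)} \cong \kk[u,v]^{(2)}$ as graded $\Gamma$-module algebras. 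Taking $\Gamma$-invariants of both sides and using that $A^H = (A^{(2)})^{\Gamma}$ — here one uses \eqref{E3.0.1}: the invariants under $H$ equal the invariants under the quotient $\kk\Gamma$ of the even Veronese, since the central $\mathbb{Z}_2$ already acts nontrivially on all odd-degree pieces so $A^H \subseteq A^{(2)}$, and on $A^{(2)}$ the $H$-action factors through $\kk\Gamma$ — we obtain $A^H \cong (\kk[u,v]^{(2)})^{\Gamma} = \kk[u,v]^{\widetilde{\Gamma}}$ as graded algebras.

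I expect the main obstacle to be two intertwined things: rigorously justifying that $M(3,1)$ is self-dual and that no $2$-dimensional simple appears in $M(3,1)^{\otimes 2}$ (the central-$\mathbb{Z}_2$ grading argument needs to be stated carefully, distinguishing the $\Gamma$-type simples from the genuinely $\widetilde{\Gamma}$-type ones), and pinning down $A_2 \cong M(3,1)$ and $A^H = (A^{(2)})^\Gamma$ precisely enough that the uniqueness in (3) really does force the commutative and noncommutative relation spaces to coincide as $\Gamma$-submodules. Everything else is Hilbert-series bookkeeping and the fusion rules read off from $\widetilde{\mathbb{E}}_6$.
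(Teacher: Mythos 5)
There is a genuine gap, and it sits at the heart of parts (3) and (4). You assert that the relation $r$ ``must be $\Gamma$-invariant so that $A^{(2)}$ is a $\Gamma$-module algebra,'' and hence spans the trivial summand $M(1,1)$. That implication is false: for $\kk[M(3,1)]/(r)$ to inherit a $\Gamma$-module algebra structure it suffices that $\kk r$ be a $\Gamma$-\emph{submodule}, not that $\Gamma$ fix $r$. Since all three one-dimensional simples $M(1,1)$, $M(1,2)$, $M(1,3)$ occur in $\kk[M(3,1)]_2$ with multiplicity one, there are three candidate lines, and your argument does not distinguish among them. The paper's mechanism for pinning down $\kk r$ is the trivial homological determinant of the induced $\Gamma$-action on $A^{(2)}$, via \cite[Proposition 2.4]{JiZ2} and \cite[Lemma~2.6(c)]{CKWZ}; this is the missing ingredient. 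The gap propagates directly into your (4): the isomorphism $A^{(2)}\cong \kk[u,v]^{(2)}$ rests entirely on the commutative and noncommutative relation lines landing in the \emph{same} isotypic component, which is exactly what is unproved without the determinant argument. (The appeal to $A^{(2)}$ being a domain does no work here.)

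Secondarily, your route to (1) is incomplete where the paper's is clean. The paper computes $M(3,1)^{\otimes 2}$ purely from the quiver by associativity: $M(3,1)\otimes(Y\otimes Y)=(M(3,1)\otimes Y)\otimes Y=(M(2,1)\oplus M(2,2)\oplus M(2,3))\otimes Y$, expanding each factor via the edges of $\widetilde{\mathbb E}_6$ and cancelling one copy of $M(3,1)$. Your argument instead needs the multiplicities of $M(1,2)$ and $M(1,3)$ in $M(3,1)^{\otimes 2}$, which the McKay quiver (recording only tensoring with $Y$) does not supply; the phrase ``constraints coming from the fusion ring'' papers over, e.g., the possibility $M(3,1)^{\oplus 2}\oplus M(1,1)\oplus M(1,2)^{\oplus 2}$. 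The central-$\mathbb{Z}_2$ argument ruling out the two-dimensional simples is fine, and your symmetric/exterior-square computation in (2) is a legitimate alternative to the paper's dimension-count on quotients, but both rest on (1) being established first.
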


\begin{proof} (1) Note that $Y \otimes Y= M(3,1) \oplus M(1,1)$ according to 
the diagram $\widetilde{\mathbb E}_6$. Further, by the diagram $\widetilde{\mathbb E}_6$, we have
$$\begin{aligned}
\; [M(3,1)\otimes M(3,1)] \oplus M(3,1) &=  M(3,1)\otimes [M(3,1)\oplus M(1,1)]\\
&= M(3,1)\otimes [Y\otimes Y]=[M(3,1)\otimes Y]\otimes Y\\
&=[M(2,1)\oplus M(2,2)\oplus M(2,3)] \otimes Y\\
&=M(3,1)^{\oplus 3} \oplus M(1,1)\oplus M(1,2)\oplus M(1,3).
\end{aligned}
$$
The assertion follows from the above equation by cancelling a copy of $M(3,1)$.
\smallskip

(2) Note that $\kk[M(3,1)]_2$ is a quotient of $\kk\langle M(3,1)\rangle_2$.
Moreover, $Y \otimes Y = M(3,1) \oplus M(1,1)$ implies that $M(3,1) = A_{2}$ 
is the generating space of the Veronese subring $A^{(2)}$. Now $A = \kk[u,v]$ 
or $\kk_{-1}[u,v]$ by Theorem~\ref{yythm0.4} and Proposition~\ref{yypro2.4}, 
and $A^{(2)}$ is respectively $\kk[a,b,c]/(ab-c^2)$ or $\kk[a,b,c]/(ab+c^2)$ 
for $a =u^2$, $b=v^2$, $c=uv$. So $A^{(2)} $ is isomorphic as 
$\Gamma$-module algebras to $\kk [M(3,1)]/(r)$ for some relation $r$ in 
degree two. This implies that $\kk[M(3,1)]_2$ has a quotient $\Gamma$-module of 
dimension 5. The assertion follows by a $\Bbbk$-vector space dimension argument. 
\smallskip

(3) As a commutative algebra,  $A^{(2)}$ is isomorphic to $\kk [M(3,1)]/(r)$  
for some $r$ in degree two by the arguments in (2). Since $A^{(2)}$ is a $\Gamma$-module algebra,
$\kk r$ is a left $\Gamma$-module. Hence, $\kk r$ is isomorphic to either
$M(1,1)$, or $M(1,2)$ or $M(1,3)$. Note that 1-dimensional $\Gamma$-modules 
$M(1,1)$, $M(1,2)$ and $M(1,3)$ are all non-isomorphic. The choice of $r$ 
is uniquely determined by using the result \cite[Proposition 2.4]{JiZ2} and 
the~fact that the $\Gamma$-action on $A^{(2)}$ has trivial homological determinant 
\cite[Lemma~2.6(c)]{CKWZ}.
\smallskip

(4) This follows from (3) and \eqref{E3.0.1}; see also \cite[Lemma~6.27(b)]{CKWZ}.
\end{proof}

The next two lemmas are similar to the previous one. We prove the 
first one and provide the corresponding diagrams for both results.

\vspace{.4in}

\hspace{.4in} $\widetilde{\mathbb E}_7$:

\vspace{-.2in}

\begin{center}
\tikzset{elliptic state/.style={draw,ellipse}}
{\tiny 
\begin{tikzpicture}
[node distance=2cm, semithick,auto]
  \tikzstyle{every state}=[fill=white,draw=black,rounded rectangle,text=black]
  \node[state] at (0,0) (A)        {$M(4,1)$};
  \node[state] at (1.5,0) (B)        {$M(3,2)$};
  \node[state] at (-1.5,0) (C)        {$M(3,1)$};
  \node[state] at (0,1) (D)       {$M(2,3)$};
  \node[state] at (3,0) (E)      {$M(2,2)$};
  \node[state] at (-3.25,0) (F)      {$Y=M(2,1)$};
  \node[state,dotted] at (-5,0) (G)   {$M(1,1)$};
   \node[state] at (4.5,0) (H)      {$M(1,2)$};

  \path (A) edge              node {} (B)
            edge              node {} (C)
            edge              node {} (D)
        (B) edge              node {} (E)
        (C) edge              node {} (F)
        (E) edge              node {} (H)
        (F) edge              node {} (G);
\end{tikzpicture}
}
\end{center}

\bigskip

\begin{lemma}
\label{yylem3.2} 
Suppose the McKay quiver corresponding to $(A,H)$ is $\widetilde{\mathbb E}_7$. Then the following statements hold.

\begin{enumerate}
\item[(1)]
$\kk\langle M(3,1)\rangle_2=
M(3,1)\otimes M(3,1)=M(3,1)\oplus M(3,2)\oplus M(2,3)\oplus M(1,1)$ as $H$-modules. 

\smallskip

\item[(2)]
$\kk[M(3,1)]_2=M(3,i)\oplus M(2,3)\oplus M(1,1)$ where $i$ =1 or 2 as $\Gamma$-modules.

\smallskip

\item[(3)]
There is a unique 1-dimensional subspace $\kk r$ of $\kk[M(3,1)]_2$, such that $A^{(2)}$  is 
isomorphic to $\kk[M(3,1)]/(r)$ as graded $\Gamma$-module algebras.
In fact, $\kk r$ is the submodule $M(1,1)$ in part {\rm{(2)}}. 

\smallskip

\item[(4)]
For each $H$ of type $\widetilde{\mathbb E}_7$, $ A^{(2)}$
is isomorphic to $\kk[u,v]^{(2)}$ as graded $\Gamma$-module algebras.
As a consequence, $A^H\; \cong \kk[u,v]^{\widetilde{\Gamma}}$ as graded 
algebras.
\end{enumerate}
\end{lemma}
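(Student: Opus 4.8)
The plan is to mimic the proof of Lemma~\ref{yylem3.1} verbatim, substituting the combinatorics of the $\widetilde{\mathbb E}_7$ diagram for those of $\widetilde{\mathbb E}_6$. The key structural input is that, in the diagram $\widetilde{\mathbb E}_7$, the vertex $M(3,1)$ is the three-dimensional simple module adjacent to $Y=M(2,1)$, and the trivial module $M(1,1)$ is the endpoint adjacent to $Y$; thus $Y\otimes Y = M(3,1)\oplus M(1,1)$ exactly as before, so $M(3,1)=A_2$ is the generating space of the Veronese subring $A^{(2)}$, and $A^{(2)}$ is $\kk[a,b,c]/(ab\pm c^2)$ with $a=u^2$, $b=v^2$, $c=uv$.

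For part~(1) I would run the same "associativity" computation: start from $[M(3,1)\otimes M(3,1)]\oplus M(3,1) = M(3,1)\otimes(M(3,1)\oplus M(1,1)) = M(3,1)\otimes(Y\otimes Y) = (M(3,1)\otimes Y)\otimes Y$, read off $M(3,1)\otimes Y$ from the edges at $M(3,1)$ in the $\widetilde{\mathbb E}_7$ diagram — namely $M(2,1)\oplus M(3,2)\oplus M(2,3)$ (wait: $M(3,1)$ is adjacent to $Y=M(2,1)$, to $M(4,1)$, so actually $M(3,1)\otimes Y = M(2,1)\oplus M(4,1)$; here one must be careful, since $M(3,1)$ in $\widetilde{\mathbb E}_7$ is the arm-vertex adjacent only to $Y$ and to $M(4,1)$, whereas in $\widetilde{\mathbb E}_6$ the analogous $M(3,1)$ was the central trivalent vertex) — then tensor with $Y$ once more using the diagram, cancel one copy of $M(3,1)$, and collect terms. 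The upshot should be $M(3,1)\otimes M(3,1) = M(3,1)\oplus M(3,2)\oplus M(2,3)\oplus M(1,1)$ as claimed; this is a finite dimension bookkeeping check ($9\times 9=81=9+9+9+\dots$, checking both the multiset of simples and total dimension), and I would simply verify the arithmetic against the stated answer.

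For part~(2) I would argue exactly as in Lemma~\ref{yylem3.1}(2): $\kk[M(3,1)]_2$ is a $\Gamma$-module quotient of $\kk\langle M(3,1)\rangle_2$ of dimension $6=\binom{3+1}{2}$, while the known presentation $A^{(2)}\cong \kk[M(3,1)]/(r)$ forces $\kk[M(3,1)]_2$ to surject onto a $5$-dimensional $\Gamma$-module; comparing with the decomposition in~(1) restricted to $\Gamma$ (using \eqref{E3.0.1}, so that $H$-simples of dimension $d$ restrict to $\Gamma$-simples of the same dimension, possibly identifying some of them), the only way to drop exactly one dimension is to delete a one-dimensional constituent, and the one-dimensional constituents of $M(3,1)\otimes M(3,1)$ are just $M(1,1)$, which pins down the decomposition as $M(3,i)\oplus M(2,3)\oplus M(1,1)$. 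For part~(3) I would note that $\kk r$ is a $\Gamma$-submodule of $\kk[M(3,1)]_2$, necessarily one-dimensional since $A^{(2)}$ is a domain of the right Hilbert series, and since the only one-dimensional constituent is $M(1,1)$ we get $\kk r\cong M(1,1)$; uniqueness of $r$ then follows from \cite[Proposition~2.4]{JiZ2} together with the triviality of the homological determinant of the $\Gamma$-action on $A^{(2)}$ \cite[Lemma~2.6(c)]{CKWZ}, exactly as before. Finally part~(4) follows formally: the pair $(\kk[M(3,1)]/(r),\Gamma)$ is the same for $A=\kk[u,v]$ and $A=\kk_{-1}[u,v]$ (the relation $r$ being the copy of $M(1,1)$ in either case), so $A^{(2)}\cong \kk[u,v]^{(2)}$ as graded $\Gamma$-module algebras, and then $A^H\cong \kk[u,v]^{\widetilde\Gamma}$ by \eqref{E3.0.1} and \cite[Lemma~6.27(b)]{CKWZ}. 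The only real obstacle is getting the $\widetilde{\mathbb E}_7$ edge-data right — in particular correctly identifying which vertex plays the role of the central trivalent node versus the arm node, and reading off $M(3,1)\otimes Y$ and its subsequent tensor with $Y$ from the diagram without slippage; everything downstream is dimension counting identical to the $\widetilde{\mathbb E}_6$ case.
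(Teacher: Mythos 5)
Your proposal is correct and follows the paper's own proof essentially verbatim: the same associativity computation $[M(3,1)\otimes M(3,1)]\oplus M(3,1)=[M(3,1)\otimes Y]\otimes Y=[M(2,1)\oplus M(4,1)]\otimes Y$ for part (1) (with the correct self-correction that $M(3,1)$ is the arm vertex adjacent only to $Y$ and $M(4,1)$), the same dimension count using the $5$-dimensional quotient for part (2), the identification of $\kk r$ with the unique copy of $M(1,1)$ for part (3), and the same deduction of (4) from (3) via \eqref{E3.0.1} and \cite[Lemma~6.27(b)]{CKWZ}. Two cosmetic points: the appeal to \cite[Proposition~2.4]{JiZ2} and trivial homological determinant is not needed here (unlike in $\widetilde{\mathbb E}_6$, the decomposition $M(3,i)\oplus M(2,3)\oplus M(1,1)$ has only one $1$-dimensional constituent, which already pins down $r$), and the dimension check should read $3\times 3=9=3+3+2+1$ rather than $9\times 9=81$.
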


\begin{proof} (1) By the diagram $\widetilde{\mathbb E}_7$, we have
$$\begin{aligned}
\; [M(3,1)\otimes & M(3,1)] \oplus M(3,1) =  M(3,1)\otimes [M(3,1)\oplus M(1,1)] \\
& =[M(3,1)\otimes Y]\otimes Y\\
&=[M(2,1)\oplus M(4,1)] \otimes Y\\
&=M(1,1)\oplus M(3,1) \oplus M(3,1)\oplus M(3,2)\oplus M(2,3).
\end{aligned}
$$
The assertion follows from the above equation by cancelling a copy of $M(3,1)$.

\smallskip

(2) Note that $(\kk[M(3,1)])_2$ is a quotient of $(\kk\langle M(3,1)\rangle)_2$
and that $(\kk[M(3,1)])_2$ has a quotient $\Gamma$-module of dimension 5. Hence 
the assertion follows. 

\smallskip

(3) As a commutative algebra $A^{(2)}$ is isomorphic to $\kk[M(3,1)]/(r)$  
for some $r$ in degree two. By part (2), the only possible 1-dimensional 
$\Gamma$-submodule in $(\kk[M(3,1)])_2$ is $M(1,1)$. The assertion follows.

\smallskip

(4) This follows from (3).
\end{proof}

\vspace{.4in}

\hspace{.4in} $\widetilde{\mathbb E}_8$:

\vspace{-.2in}

\begin{center}
\tikzset{elliptic state/.style={draw,ellipse}}
{\tiny 
\begin{tikzpicture}
[node distance=2cm, semithick,auto]
  \tikzstyle{every state}=[fill=white,draw=black, rounded rectangle, text=black]
  \node[state] at (0,0) (A)        {$M(4,1)$};
  \node[state] at (1.5,0) (B)        {$M(5,1)$};
  \node[state] at (-1.5,0) (C)        {$M(3,1)$};
  \node[state] at (1.5,1) (D)       {$M(3,2)$};
  \node[state] at (3,0) (E)      {$M(6,1)$};
  \node[state] at (-3.25,0) (F)      {$Y=M(2,1)$};
  \node[state,dotted] at (-5,0) (G)   {$M(1,1)$};
   \node[state] at (4.5,0) (H)      {$M(4,2)$};
     \node[state] at (6,0) (I)      {$M(2,2)$};

  \path (A) edge              node {} (B)
            edge              node {} (C)
        (B) edge              node {} (D)
              (B) edge              node {} (E)
        (C) edge              node {} (F)
        (E) edge              node {} (H)
         (H) edge              node {} (I)
        (F) edge              node {} (G);
\end{tikzpicture}
}
\end{center}

\bigskip

\begin{lemma}
\label{yylem3.3} 
Suppose the McKay quiver corresponding to $(A,H)$ is $\widetilde{\mathbb E}_8$. Then the following statements hold.

\begin{enumerate}
\item[(1)]
$\kk\langle M(3,1)\rangle_2=
M(3,1)\otimes M(3,1)=M(5,1)\oplus M(3,1)\oplus M(1,1)$ as $H$-modules.

\smallskip

\item[(2)]
$\kk[M(3,1)]_2=M(5,1)\oplus M(1,1)$ as $\Gamma$-modules.

\smallskip

\item[(3)]
There is a unique 1-dimensional subspace $\kk r$ of $\kk[M(3,1)]_2$, such that  $A^{(2)}$ is 
isomorphic to $\kk[M(3,1)]/(r)$ as graded $\Gamma$-module algebras.
In fact, $\kk r$ is the submodule $M(1,1)$ in part {\rm{(2)}}. 

\smallskip

\item[(4)]
For each $H$ of type $\widetilde{\mathbb E}_8$, $A^{(2)}$
is isomorphic to $\kk[u,v]^{(2)}$ as graded $\Gamma$-module algebras.
As a consequence, $A^H \cong \kk[u,v]^{\widetilde{\Gamma}}$ as graded 
algebras. \qed
\end{enumerate}
\end{lemma}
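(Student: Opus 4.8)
The plan is to run the same argument used for Lemma~\ref{yylem3.1} and Lemma~\ref{yylem3.2}, reading off every tensor product decomposition directly from the diagram $\widetilde{\mathbb E}_8$ and from associativity of $\otimes$ in $G_0(H)$, and then closing parts (2)--(4) with a dimension count together with structure results already in hand.

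For part (1), I would begin from $Y\otimes Y = M(3,1)\oplus M(1,1)$, which records the two edges at $Y=M(2,1)$ in $\widetilde{\mathbb E}_8$ together with $\hdet_H A = \epsilon$ (forcing the trivial summand $M(1,1)$). The remaining edges give $M(3,1)\otimes Y = M(2,1)\oplus M(4,1)$ and $M(4,1)\otimes Y = M(5,1)\oplus M(3,1)$, these being the only neighbours of $M(3,1)$ and of $M(4,1)$ in $\widetilde{\mathbb E}_8$. Associativity then yields
$$\begin{aligned}
[M(3,1)\otimes M(3,1)]\oplus M(3,1) &= M(3,1)\otimes[M(3,1)\oplus M(1,1)] = M(3,1)\otimes[Y\otimes Y]\\
&= [M(3,1)\otimes Y]\otimes Y = [M(2,1)\oplus M(4,1)]\otimes Y\\
&= [M(1,1)\oplus M(3,1)]\oplus[M(5,1)\oplus M(3,1)],
\end{aligned}$$
and cancelling one copy of $M(3,1)$ gives $M(3,1)\otimes M(3,1) = M(5,1)\oplus M(3,1)\oplus M(1,1)$; the dimension check $9 = 5+3+1$ confirms it.

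For parts (2) and (3): $\kk[M(3,1)]_2 = \mathrm{Sym}^2 M(3,1)$ is the quotient of $M(3,1)\otimes M(3,1)$ by the $3$-dimensional submodule $\wedge^2 M(3,1)$; since the only $3$-dimensional summand appearing in (1) is $M(3,1)$ (as $M(5,1)$ is simple and $5$-dimensional), we get $\kk[M(3,1)]_2 = M(5,1)\oplus M(1,1)$, first as $H$-modules and then as $\Gamma$-modules once one observes via \eqref{E3.0.1} that the $H$-action on $\kk[M(3,1)]$ and on $A^{(2)}$ factors through $\kk\Gamma$ (the central $\mathbb Z_2$ acts on $A_d$ by $(-1)^d$, so trivially on $A^{(2)}$ and on $A_2\cong M(3,1)$). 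For (3), $A^{(2)}$ is the commutative ring $\kk[a,b,c]/(ab\pm c^2)$ with $a=u^2$, $b=v^2$, $c=uv$, and it is generated by $A_2\cong M(3,1)$, hence is a quotient $\kk[M(3,1)]/(r)$ by a $1$-dimensional $\Gamma$-submodule $\kk r\subset\kk[M(3,1)]_2$; by part (2) the unique such submodule is $M(1,1)$, so $\kk r = M(1,1)$, which in particular fixes $r$ up to scalar.

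For part (4): running the identical analysis for $A = \kk[u,v]$ shows $\kk[u,v]^{(2)} = \kk[M(3,1)]/(r')$ with $\kk r'$ again the trivial submodule $M(1,1)$ of $\kk[M(3,1)]_2$; since a $1$-dimensional relation space is determined up to scalar, rescaling gives a graded $\Gamma$-module algebra isomorphism $A^{(2)}\cong\kk[u,v]^{(2)}$. Passing to $\Gamma$-invariants and using \eqref{E3.0.1} (see also \cite[Lemma~6.27(b)]{CKWZ}), i.e. $A^H = (A^{(2)})^{\Gamma}$ and $\kk[u,v]^{\widetilde\Gamma} = (\kk[u,v]^{(2)})^{\Gamma}$, gives $A^H\cong\kk[u,v]^{\widetilde\Gamma}$ as graded algebras. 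I do not expect a real obstacle here: the content lies in reading the $\widetilde{\mathbb E}_8$ diagram correctly and in the $\mathbb Z_2$/$\Gamma$ bookkeeping of \eqref{E3.0.1}. The one point needing care is the uniqueness of $r$ in (3)--(4); here it is immediate because $\kk[M(3,1)]_2$ has a unique $1$-dimensional submodule, so, unlike the $\widetilde{\mathbb E}_6$ case of Lemma~\ref{yylem3.1}(3), one needs neither \cite[Proposition~2.4]{JiZ2} nor the triviality of the homological determinant of the induced $\Gamma$-action.
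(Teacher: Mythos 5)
Your proposal is correct and is essentially the argument the paper intends: the paper omits the proof of Lemma~\ref{yylem3.3}, stating that it is "similar to the previous one," and your computation of $M(3,1)\otimes M(3,1)$ from the $\widetilde{\mathbb E}_8$ edges, the identification of the unique trivial submodule $M(1,1)$ as $\kk r$, and the passage to $A^H=(A^{(2)})^\Gamma$ via \eqref{E3.0.1} reproduce exactly the template of Lemmas~\ref{yylem3.1} and~\ref{yylem3.2}. Your only deviation is cosmetic: in part (2) you identify the kernel of $\kk\langle M(3,1)\rangle_2\to\kk[M(3,1)]_2$ as $\wedge^2M(3,1)\cong M(3,1)$, whereas the paper's template just counts dimensions of quotients; both are valid, and your observation that the uniqueness of $r$ here needs neither \cite[Proposition~2.4]{JiZ2} nor the homological determinant is accurate and mirrors the proof of Lemma~\ref{yylem3.2}(3).
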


\subsection{Fixed subrings under Hopf actions of type $\mathbb{D}$}
\label{yysec3.2}
In this subsection we study type $\mathbb{D}$. In type $\mathbb{D}$ we have $A = \kk[u,v]$ 
or $\kk_{-1}[u,v]$ by Theorem~\ref{yythm0.4} and Proposition~\ref{yypro2.4}.
 Let $D_{2n}$ be the dihedral 
group of order $2n$ for some integer $n\geq 2$. One presentation of 
$D_{2n}$ is
$$\langle s_{+}, s_{-} \mid s_{+}^2=s_{-}^2=(s_{+}s_{-})^n=1\rangle.$$
The binary dihedral group of order $4n$, denoted by $\widetilde{D}_{2n}$, 
has a presentation
$$\langle \sigma, \tau \mid \sigma^{n}=\tau^2, \tau^4=1, \tau^{-1} \sigma 
\tau=\sigma^{-1}\rangle.$$
There is a short exact sequence of groups
$$1\to \{\tau^2, 1\}\to \widetilde{D}_{2n}\to D_{2n}\to 1$$
where $\{\tau^2, 1\}(\cong {\mathbb Z}/(2))$ is a central subgroup of 
$\widetilde{D}_{2n}$.
The map $\widetilde{D}_{2n}\to D_{2n}$ sends $\sigma$ to $s_{+}s_{-}$ and $\tau$ 
to $s_{+}$. We can embed $\widetilde{D}_{2n}$ to ${\text{SL}}_2(\kk)$ by setting 
\begin{equation}
\label{E3.3.1}\tag{E3.3.1}
\sigma \mapsto \begin{pmatrix} \omega & 0\\ 0 & \omega^{-1}\end{pmatrix}, \quad 
{\text{and}} \quad
\tau \mapsto \begin{pmatrix} 0 & i\\ i & 0\end{pmatrix}
\end{equation}
where $\omega$ is a primitive $2n$-th root of unity and $i^2=-1$. Using 
\eqref{E3.3.1}, $\widetilde{D}_{2n}$ acts on the commutative polynomial ring
$T:=\kk[u,v]$ naturally. The lemma below follows by an easy computation.

\begin{lemma}
\label{yylem3.4}  With the basis $\{u,v\}$ of $T_1$ and the action 
defined by \eqref{E3.3.1}, the following hold.
\begin{enumerate}
\item[(1)]
The $\sigma$ and $\tau$ actions on the basis $\{u^2, uv, v^2\}$ of $T_2$
are given by
\begin{equation}
\label{E3.4.1}\tag{E3.4.1}
\sigma = \begin{pmatrix} \omega^2 & 0&0\\ 0 & 1 & 0\\
0& 0 & \omega^{-2}\end{pmatrix} \quad 
{\text{and}} \quad
\tau= \begin{pmatrix} 0 & 0 & -1\\ 0 & -1& 0\\ -1 & 0 &0 \end{pmatrix}.
\end{equation}
So, $T_2$ is a direct sum of two 
$\widetilde{D}_{2n}$-submodules $\kk u^2+ \kk v^2$ and $\kk uv$.

\smallskip

\item[(2)]
Let $a=u^2, b=v^2$ and $c=uv$. Then $T^{(2)}=\kk [a,b,c]/(ab-c^2)$,
where the $D_{2n}$-action on $T^{(2)}$ is determined by
\begin{equation}
\label{E3.4.2}\tag{E3.4.2}
s_{+}= \begin{pmatrix} 0 & -1 & 0\\ -1 & 0& 0\\ 0 & 0 &-1 \end{pmatrix} \quad 
{\text{and}} \quad
s_{-} = \begin{pmatrix} 0& -\omega^{-2} &0\\ -\omega^{2} & 0 & 0\\
0& 0 & -1\end{pmatrix}. 
\end{equation}
\vspace{-.25in}

 \qed
\end{enumerate}
\end{lemma}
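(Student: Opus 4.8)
\textbf{Proof proposal for Lemma~\ref{yylem3.4}.}

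The plan is to carry out the two matrix computations directly, exactly as the phrase ``follows by an easy computation'' suggests, but organized so that there are no genuine obstacles. For part (1), I would start from the embedding \eqref{E3.3.1}: on $T_1$ we have $\sigma(u) = \omega u$, $\sigma(v) = \omega^{-1} v$, $\tau(u) = i v$, $\tau(v) = i u$. Since $\sigma,\tau$ act as degree-preserving algebra automorphisms of $T$, I compute their action on the monomial basis $\{u^2, uv, v^2\}$ of $T_2$ multiplicatively: $\sigma(u^2) = \omega^2 u^2$, $\sigma(uv) = uv$, $\sigma(v^2) = \omega^{-2} v^2$, which gives the diagonal matrix in \eqref{E3.4.1}; and $\tau(u^2) = (iv)^2 = -v^2$, $\tau(uv) = (iv)(iu) = -uv$, $\tau(v^2) = (iu)^2 = -u^2$, which gives the anti-diagonal matrix in \eqref{E3.4.1}. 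From these two matrices it is immediate that $\kk uv$ is a common eigenspace (hence a $\widetilde{D}_{2n}$-submodule), and that $\kk u^2 + \kk v^2$ is preserved by both generators; since $\sigma,\tau$ generate $\widetilde{D}_{2n}$, both subspaces are $\widetilde{D}_{2n}$-submodules, and $T_2 = (\kk u^2 + \kk v^2) \oplus \kk uv$ as claimed.

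For part (2), I would first identify the Veronese subring $T^{(2)} = \bigoplus_i T_{2i}$ with $\kk[a,b,c]/(ab-c^2)$ via $a = u^2$, $b = v^2$, $c = uv$: these three elements generate $T^{(2)}$ as an algebra (every even-degree monomial in $u,v$ is a product of $u^2$, $v^2$, $uv$), and the only relation among them in degree $2$ is $ab = u^2 v^2 = (uv)^2 = c^2$, with a Hilbert-series count confirming there are no further relations. Next I must check that the $\widetilde{D}_{2n}$-action on $T^{(2)}$ factors through $D_{2n}$; this is because $\tau^2$ acts on $T_1$ by $\tau^2(u) = \tau(iv) = i(iu) = -u$ and likewise $\tau^2(v) = -v$, so $\tau^2$ acts as $-1$ on $T_1$ and therefore trivially on every even-degree piece, in particular on $T^{(2)}$. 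Under the quotient map $\widetilde{D}_{2n} \to D_{2n}$ sending $\sigma \mapsto s_+ s_-$ and $\tau \mapsto s_+$, I then read off the $s_+$-action on $\{a,b,c\}$ directly as the $\tau$-action: $s_+(a) = -b$, $s_+(b) = -a$, $s_+(c) = -c$, which is the first matrix in \eqref{E3.4.2}. Finally, $s_- = s_+^{-1}(s_+ s_-) $ corresponds to $\tau^{-1}\sigma$ acting on $T^{(2)}$ (equivalently one may use $s_- = (s_+s_-)s_+^{-1}\cdot(\text{conjugation})$, but the cleanest route is $\tau^{-1}\sigma$): compute $\sigma$ on $\{a,b,c\}$ as $\mathrm{diag}(\omega^2, \omega^{-2}, 1)$ from part (1), compose with $\tau^{-1}$ acting as the matrix $\tau$ in \eqref{E3.4.1} (which is its own inverse up to the sign bookkeeping, since $\tau^2 = -1$ on $T_1$ forces $\tau^2 = 1$ on $T_2$), and the product gives $s_-(a) = -\omega^{2} b$, $s_-(b) = -\omega^{-2} a$, $s_-(c) = -c$, matching the second matrix in \eqref{E3.4.2}.

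The only point requiring a moment's care—and the closest thing to an obstacle—is the bookkeeping in part (2) relating the abstract generators $s_+, s_-$ of $D_{2n}$ to the images of $\sigma, \tau$ under $\widetilde{D}_{2n} \to D_{2n}$: one must be consistent about whether the surjection sends $\tau \mapsto s_+$ and $\sigma \mapsto s_+ s_-$ (as stated in the text) versus some other convention, and then express $s_-$ itself in terms of $\sigma$ and $\tau$ before translating to a matrix. Concretely, from $\sigma \mapsto s_+ s_-$ and $\tau \mapsto s_+$ one gets $s_- \mapsfrom \tau^{-1}\sigma$, and it is worth double-checking that $\tau^{-1}\sigma$ indeed has order $2$ modulo $\langle \tau^2\rangle$ and is the ``other'' reflection, so that the resulting matrix is the honest $s_-$-matrix and not, say, $s_-^{-1}$ or a conjugate. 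Once that identification is pinned down, everything else is a direct substitution into the multiplicative action on monomials, and the displayed matrices \eqref{E3.4.1} and \eqref{E3.4.2} drop out. I would present the computation compactly, showing the action on the generators $u,v$ and then on $u^2, uv, v^2$, and leave the reader to verify the (entirely mechanical) matrix entries.
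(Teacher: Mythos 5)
Your computation is correct and is exactly the ``easy computation'' the paper omits: the paper states Lemma~\ref{yylem3.4} with no written proof beyond the remark that it follows by direct calculation, and your multiplicative evaluation of $\sigma,\tau$ on $\{u^2,uv,v^2\}$, the observation that $\tau^2$ acts as $-1$ on $T_1$ and hence trivially on $T^{(2)}$, and the identification $s_+\leftrightarrow\tau$, $s_-\leftrightarrow\tau^{-1}\sigma$ (with $\tau^{-1}=\tau$ on even degrees) reproduce \eqref{E3.4.1} and \eqref{E3.4.2} exactly. No gaps.
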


By \cite{Ma}, there are two different noncocommutative cocycle deformations of {$\kk{D}_{4n}$} 
(or of  $\kk \widetilde{D}_{2n}$), denoted by ${\mathcal A}(\widetilde{D}_{2n})^\circ$ and 
${\mathcal B}(\widetilde{D}_{2n})^\circ$. By  \cite[Proposition~2.5 and 
Theorem~4.4]{CDMM} and \cite[Theorem~4.1(1)]{Ma},
${\mathcal B}(\widetilde{D}_{2n})$ is self-dual 
and has no non-trivial cocycle twist (cotwist).
Here we will
 consider ${\mathcal B}(\widetilde{D}_{2n})$, 
or equivalently, ${\mathcal B}(\widetilde{D}_{2n})^{\circ}$.

\smallbreak By \cite[Definition 3.3(2)]{Ma}, for 
$n \geq 2$, ${\mathcal B}(\widetilde{D}_{2n})$ is the Hopf algebra 
generated by a central grouplike element $a$ of order $2$, and $s_{+}$ 
and $s_{-}$, subject to the relations
\begin{equation}
\label{E3.4.3}\tag{E3.4.3}
a^2=1, \quad a s_{\pm}=s_{\pm} a, \quad s_{\pm}^2=1, \quad (s_{+}s_{-})^n=a.
\end{equation}
with the structure
\begin{equation}
\label{E3.4.4}\tag{E3.4.4}
\Delta(s_{\pm })=s_{\pm}\otimes e_0 s_{\pm}+s_{\mp}\otimes e_1 s_{\pm},
\quad
\epsilon(s_{\pm})=1, 
\quad 
S(s_{\pm})=e_0s_{\pm}+e_{1}s_{\mp}
\end{equation}
where $e_{0}=\frac{1}{2}(1+a)$ and $e_1=\frac{1}{2}(1-a)$.

\smallbreak By \cite[Lemma 5.15, Remarks 5.22 and 5.23]{BN}, the Hopf algebra
${\mathcal B}(\widetilde{D}_{2n})$ has four non-isomorphic 1-dimensional
simple modules and $(n-1)$ non-isomorphic 2-dimensional 
simple modules. We list these 2-dimensional simple modules next.
Let $\xi$ be a primitive $2n$-th root of unity, and let $\omega=\xi^{l}$
for some $l$. Let $M(2,l)$, for $l=1,2,\dots, n-1$, be the 
2-dimensional simple left ${\mathcal B}(\widetilde{D}_{2n})$-module
determined by its action on a 2-dimensional 
$\Bbbk$-space $\kk s\oplus \kk t$ with matrix presentation
\begin{align}
\label{E3.4.5}\tag{E3.4.5}
s_{+}=\begin{pmatrix} 0& 1\\ 1& 0\end{pmatrix}, \quad
s_{-}=\begin{pmatrix} 0& \omega^{-1}\\ \omega& 0\end{pmatrix}, \quad
a=\begin{pmatrix} (-1)^l& 0\\ 0& (-1)^l\end{pmatrix}
\end{align}
where $\omega=\xi^l$. By using the relations of the algebra 
\eqref{E3.4.3} and the group characters, one can easily show that these 
$\{M(2,l)\}_{l=1}^{n-1}$ are non-isomorphic 2-dimensional simple modules. 
When $l$ is even, $M(2,l)$ is killed by the Hopf ideal
generated by $(a-1)$. Hence $M(2,l)$ is not inner faithful. When 
$l$ is odd and $g:=\gcd(l, 2n)>1$, then $M(2,l)$ is killed by the Hopf ideal
generated by $(s_{+}s_{-})^d-1, (s_{-}s_{+})^d-1$, where
$d=2n/g$. Hence $M(2,l)$ is not inner faithful. Therefore the inner faithful 
modules are precisely those $M(2,l)$, where $\gcd(l,2n)=1$ (or equivalently,
$\omega$ is primitive $2n$-th root of unity).

\smallbreak For every inner faithful 2-dimensional representation of 
${\mathcal B}(\widetilde{D}_{2n})$ (where $\omega=\xi^{l}$ 
with $\gcd(l,2n)=1$), we consider the action on the free algebra
$\kk \langle M(2,l)\rangle=\kk\langle s,t\rangle$ extended by 
\eqref{E3.4.5}. Using the
coproduct, these elements have the following 
matrix presentations when using the basis $\{s^2, st, ts, t^2\}$
of  $\kk \langle s,t\rangle_2$:

$$s_{+}=\begin{pmatrix} 0& 0& 0& \omega^{-1}\\ 0& 0&\omega^{-1}&0\\
0&\omega&0&0\\ \omega&0&0&0\end{pmatrix}, \quad
s_{-}=\begin{pmatrix} 0& 0& 0& \omega^{-1}\\ 0& 0&\omega&0\\
0&\omega^{-1}&0&0\\ \omega&0&0&0\end{pmatrix}, \quad
 a=\begin{pmatrix} 1& 0 & 0 & 0\\ 0& 1&0&0\\ 0& 0& 1& 0\\
0 & 0 & 0 & 1\end{pmatrix}.$$

\medskip

\begin{lemma}
\label{yylem3.5}
Suppose $n\geq 2$. Given the ${\mathcal B}(\widetilde{D}_{2n})$-action on $\kk\langle s,t\rangle$, we obtain   a unique 1-dimensional 
left trivial ${\mathcal B}(\widetilde{D}_{2n})$-module 
inside $\kk \langle s,t\rangle_2$,
which is spanned by $r:=\omega s^2+ t^2$.
\end{lemma}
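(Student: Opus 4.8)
The plan is to locate the one-dimensional trivial submodule of $\kk\langle s,t\rangle_2$ directly from the matrix presentations of $s_+$, $s_-$, and $a$ on the basis $\{s^2, st, ts, t^2\}$ displayed above. Since $a$ acts as the identity on $\kk\langle s,t\rangle_2$, any submodule automatically factors through the action of the subalgebra generated by $s_+, s_-$, so the search reduces to finding the common $+1$-eigenvectors of $s_+$ and $s_-$ (using $\epsilon(s_\pm)=1$, a trivial module is exactly a vector fixed by both $s_+$ and $s_-$). First I would write out the eigenvalue-$1$ equation for $s_+$ acting on a general element $\alpha s^2 + \beta st + \gamma ts + \delta t^2$: reading off the matrix for $s_+$, the fixed-vector condition forces $\delta = \omega\alpha$ (from the $s^2$-row paired with the $t^2$-row) and $\gamma = \omega\beta$ (from the $st$/$ts$ rows), so the $s_+$-fixed subspace is two-dimensional, spanned by $\{\,s^2 + \omega^{-1} t^2,\ st + \omega^{-1}\! \cdot\! (\text{adjust}) \cdot ts\,\}$ — I'd compute the precise second generator, but the point is it is two-dimensional.

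Next I would impose the $s_-$-fixed condition on that two-dimensional space. From the $s_-$ matrix, the $s^2$-row again gives $\delta = \omega\alpha$ (consistent), but the $st$-row now reads $\omega\gamma$ in the $ts$-slot while the $ts$-row reads $\omega^{-1}\beta$, so being $s_-$-fixed forces $\beta = \omega\gamma$ and $\gamma = \omega^{-1}\beta$ simultaneously, i.e. $\beta = \omega\gamma = \beta$, which is automatic only if the earlier $s_+$-relation $\gamma = \omega\beta$ is also respected — combining $\gamma = \omega\beta$ with $\beta = \omega\gamma$ gives $\beta = \omega^2\beta$, and since $\omega$ is a primitive $2n$-th root of unity with $n\geq 2$ we have $\omega^2 \neq 1$, hence $\beta = \gamma = 0$. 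This kills the $st$/$ts$ part entirely and leaves exactly the line through $\alpha(s^2 + \omega^{-1} t^2)$, equivalently the line spanned by $r = \omega s^2 + t^2$. One then checks $s_+(r) = r$ and $s_-(r) = r$ directly from the matrices, confirming $\kk r$ is trivial, and since $a(r) = r$ as well, $\kk r$ is the trivial ${\mathcal B}(\widetilde{D}_{2n})$-module.

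For uniqueness, the computation above already shows the common $+1$-eigenspace of $s_+$ and $s_-$ inside $\kk\langle s,t\rangle_2$ is precisely one-dimensional, so $\kk r$ is the unique trivial submodule. The one place to be careful is making sure the eigenvalue bookkeeping between the two $4\times 4$ matrices is done with the correct powers of $\omega$ — a sign or exponent slip there would change the answer — so I would double-check the coproduct computation $\Delta(s_\pm) = s_\pm \otimes e_0 s_\pm + s_\mp \otimes e_1 s_\pm$ that produces those matrices, using $e_0 = \frac12(1+a)$, $e_1 = \frac12(1-a)$ and the fact that $a$ acts as $(-1)^l = -1$ on $M(2,l)$ when $\gcd(l,2n)=1$ (so $l$ odd), which is exactly what makes $s_\pm$ act on $M(2,l)\otimes M(2,l)$ by the stated matrices with $a$ acting trivially. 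The main obstacle is thus purely the linear-algebra verification that the two fixed-subspace conditions intersect in a single line; there is no conceptual difficulty, only the need to track the primitivity hypothesis $\omega^2 \neq 1$, which is where $n \geq 2$ enters essentially.
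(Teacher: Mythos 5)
Your proposal is correct and matches the paper's own proof, which likewise splits $\kk\langle s,t\rangle_2$ into the spans of $\{s^2,t^2\}$ and $\{st,ts\}$, observes that the latter carries no fixed line (it is a simple module, which again uses $\omega^2\neq 1$), and checks directly that $r$ spans the common $+1$-eigenspace of $s_{\pm}$ on the former. The only blemish is the one you flag yourself: the intermediate relation $\delta=\omega\alpha$ is inconsistent with your final line $\kk(\omega s^2+t^2)$, which requires $\alpha=\omega\delta$; but the final answer, the one-dimensionality of the common fixed subspace (hence uniqueness), and the essential role of $n\geq 2$ via $\omega^2\neq 1$ are all right.
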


\begin{proof} The subspace spanned by $st$ and $ts$ is a 
simple ${\mathcal B}(\widetilde{D}_{2n})$-module. The 
subspace spanned by $s^2$ and $t^2$ decomposes into 
$s_{\pm}$-eigenspaces with eigenvalues $1$ and $-1$. By a direct
computation, $r:=\omega s^2+ t^2$ is preserved by $s_{\pm}$ and
$a$. Hence $\kk r$ is a trivial ${\mathcal B}(\widetilde{D}_{2n})$-module.
\end{proof}

Let $Y$ be the 2-dimensional ${\mathcal B}(\widetilde{D}_{2n})$-module
determined by the action given in  \eqref{E3.4.5}.
By Lemma \ref{yylem3.5} and \cite[Theorem 2.1]{CKWZ}, there
is a unique graded ${\mathcal B}(\widetilde{D}_{2n})$-action 
on $\kk\langle s,t\rangle/(\omega s^2+ t^2)$ so that the degree one piece with the ${\mathcal B}(\widetilde{D}_{2n})$-action 
 is isomorphic to $Y$.
Since $\kk\langle s,t\rangle/(\omega s^2+ t^2)$ is isomorphic to 
$\kk_{-1}[u,v]$, there
is a unique graded ${\mathcal B}(\widetilde{D}_{2n})$-action 
on $\kk_{-1}[u,v]$ so that the degree one piece with the ${\mathcal B}(\widetilde{D}_{2n})$-action 
 is isomorphic to $Y$.

\smallbreak Let $\alpha=st$, $\beta=-\omega^{-1}ts$ and $\gamma= is^2$ (where
$i^2=-1$) in $B:=
\kk\langle s,t\rangle/(\omega s^2+ t^2)$. Then $B^{(2)}$ is a commutative
algebra generated by $\alpha$, $\beta$ and $\gamma$, and subject to one 
relation $\alpha\beta+\gamma^2=0$. With respect to the basis $\{\alpha,
\beta, \gamma\}$, $s_{\pm}$ have the following matrix presentation
(with $a$ being the identity)
\begin{equation}
\label{E3.5.1}\tag{E3.5.1}
s_{+} = \begin{pmatrix} 0& -1 &0\\ -1 & 0 & 0\\
0& 0 & -1\end{pmatrix}, \quad 
{\text{and}} \quad
s_{-}= \begin{pmatrix} 0 & -\omega^2 
& 0\\ -\omega^{-2} & 0& 0\\ 0 & 0 &-1 \end{pmatrix}.
\end{equation}
Now we are ready to prove the following.

\begin{lemma}
\label{yylem3.6}
Let $n\geq 2$  and let $H:={\mathcal B}(\widetilde{D}_{2n})\cong 
{\mathcal B}(\widetilde{D}_{2n})^{\circ}$ act on $A=\kk_{-1}[u,v]$
inner faithfully with trivial homological determinant. Then $A^{(2)}$
is isomorphic to $\kk[u,v]^{(2)}$ as graded $D_{2n}$-module algebras.
So, $A^H\cong \kk[u,v]^{\widetilde{D}_{2n}}$ as graded 
algebras. 
\end{lemma}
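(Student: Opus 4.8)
The plan is to mimic the strategy used in the type $\mathbb{E}$ lemmas, namely to identify a generating space for the Veronese subring $A^{(2)}$ that carries an honest $D_{2n}$-action (as opposed to a genuinely noncommutative $H$-action), and then to compare this with the classical commutative situation governed by Lemma~\ref{yylem3.4}. First I would observe that by Proposition~\ref{yypro2.4} the McKay quiver of $(A,H)$ here is of type $\widetilde{\mathbb D}$, and — exactly as in the $\mathbb{E}$ cases — the fusion-ring computation $Y\otimes Y$ shows that $A_2$ decomposes as a sum of smaller simple $H$-modules, with the "$s^2,t^2$" part being where the degree-two relation lives. In fact the hard analysis has essentially been done already: Lemma~\ref{yylem3.5} pins down the unique trivial submodule $\kk r=\kk(\omega s^2+t^2)$ of $\kk\langle s,t\rangle_2$, so that $A=B=\kk\langle s,t\rangle/(r)\cong\kk_{-1}[u,v]$, and the discussion following it produces the explicit presentation $B^{(2)}=\kk[\alpha,\beta,\gamma]/(\alpha\beta+\gamma^2)$ together with the $D_{2n}$-action \eqref{E3.5.1} on the basis $\{\alpha,\beta,\gamma\}$.

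Next I would run the comparison on the commutative side. By Lemma~\ref{yylem3.4}(2), with $a=u^2,b=v^2,c=uv$ we have $T^{(2)}=\kk[u,v]^{(2)}=\kk[a,b,c]/(ab-c^2)$, carrying the $D_{2n}$-action \eqref{E3.4.2}
\[
s_{+}=\begin{pmatrix} 0 & -1 & 0\\ -1 & 0& 0\\ 0 & 0 &-1 \end{pmatrix},\qquad
s_{-}=\begin{pmatrix} 0& -\omega^{-2} &0\\ -\omega^{2} & 0 & 0\\ 0& 0 & -1\end{pmatrix}.
\]
Comparing with \eqref{E3.5.1}, the two actions are intertwined by the linear change of basis $\alpha\mapsto b$, $\beta\mapsto a$, $\gamma\mapsto c$ (equivalently $\omega\leftrightarrow\omega^{-1}$, which is harmless since $\omega$ is a primitive $2n$-th root of unity and we may relabel $\sigma\mapsto\sigma^{-1}$), and this same substitution carries the quadric $\alpha\beta+\gamma^2$ to $ab+c^2$ — which under the sign change $c\mapsto ic$ becomes $ab-c^2$. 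So $B^{(2)}$ and $T^{(2)}$ are isomorphic as graded $D_{2n}$-module algebras via an explicit degree-preserving algebra map, proving the first assertion $A^{(2)}\cong\kk[u,v]^{(2)}$.

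For the consequence $A^H\cong\kk[u,v]^{\widetilde{D}_{2n}}$, I would argue exactly as in Lemma~\ref{yylem3.1}(4): the short exact sequence $\kk\to\kk\mathbb Z_2\to H\to\kk D_{2n}\to\kk$ (the group-level version $1\to\{1,\tau^2\}\to\widetilde D_{2n}\to D_{2n}\to1$ together with its Hopf analogue coming from \eqref{E3.4.3}) identifies $A^H$ with $(A^{(2)})^{D_{2n}}$, since the central grouplike $a$ acts by $-1$ on $A_1$, hence trivially on $A_2$, so that $A^H=\bigoplus_i (A_{2i})^{D_{2n}}=(A^{(2)})^{D_{2n}}$; and similarly $\kk[u,v]^{\widetilde D_{2n}}=(\kk[u,v]^{(2)})^{D_{2n}}=(T^{(2)})^{D_{2n}}$. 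The $D_{2n}$-equivariant isomorphism $A^{(2)}\cong T^{(2)}$ established above therefore restricts to a graded algebra isomorphism $A^H\cong\kk[u,v]^{\widetilde D_{2n}}$. The main obstacle is making sure the explicit change of basis genuinely intertwines the matrices \eqref{E3.5.1} and \eqref{E3.4.2} and simultaneously respects the two quadratic relations — i.e. keeping careful track of the $\omega\leftrightarrow\omega^{-1}$ and $c\mapsto ic$ bookkeeping so that one really lands on $\kk[u,v]^{\widetilde D_{2n}}$ with its standard action from \eqref{E3.3.1}, rather than on some a priori different action; this is a routine but slightly fiddly linear-algebra verification, and appealing to \cite[Proposition 2.4]{JiZ2} and the trivial-homological-determinant condition \cite[Lemma~2.6(c)]{CKWZ} (as in Lemma~\ref{yylem3.1}(3)) streamlines the uniqueness part.
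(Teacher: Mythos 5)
Your proposal is correct and follows essentially the same route as the paper's proof: uniqueness of the action via Lemma~\ref{yylem3.5}, the presentation $A^{(2)}\cong\kk[\alpha,\beta,\gamma]/(\alpha\beta+\gamma^2)$ with the $D_{2n}$-action \eqref{E3.5.1}, comparison with \eqref{E3.4.2}, and then passage to $A^H=(A^{(2)})^{D_{2n}}$ via \cite[Lemma 6.27(b)]{CKWZ}. The only difference is that you spell out the explicit intertwining substitution ($\alpha\leftrightarrow\beta$, $\gamma\mapsto ic$) that the paper compresses into ``comparing \eqref{E3.5.1} with \eqref{E3.4.2},'' and your bookkeeping there checks out.
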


\begin{proof} By Lemma \ref{yylem3.5}, such an $H$-action on
$A$ is uniquely determined by $\omega:=\xi^l$. 
The induced $D_{2n}$-action on $A^{(2)}\cong 
\kk[\alpha,\beta,\gamma]/(\alpha\beta+\gamma^2)$ is determined by
\eqref{E3.5.1}. The main assertion follows by comparing \eqref{E3.5.1}
with \eqref{E3.4.2}. 

\smallbreak As a consequence
\begin{equation}
\label{E3.6.1}\tag{E3.6.1}
(A^{(2)})^{D_{2n}}\cong (\kk[u,v]^{(2)})^{D_{2n}}.
\end{equation}
The final statement follows by \cite[Lemma 6.27(b)]{CKWZ} and \eqref{E3.6.1}.
\end{proof}

%%%%%%%%%%%%%%%%%%%%%   Sectio 4
%%%%%%%%%%%%%%%%%%%%%%%%%%%%%%%%%%%%%%%%%%%%%%%%%%%%%%%%%%%%%%%%%%%%%%%%%%
%%%%%%%%%%%%%%%%%%%%%%%%%%%%%%%%
\section{Auslander's theorem for quantum binary polyhedral groups} 
\label{yysec4}

In this section we prove Conjecture~\ref{yycon0.2} for every pair
$(A,H)$ in Theorem~\ref{yythm0.4}.

\begin{theorem} 
\label{yythm4.1} 
Conjecture~{\rm{\ref{yycon0.2}}} is true for AS regular algebras of 
dimension 2.
\end{theorem}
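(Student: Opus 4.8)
The plan is to prove Conjecture~\ref{yycon0.2} by a case-by-case analysis over the pairs $(A,H)$ in Theorem~\ref{yythm0.4}, using the McKay quiver data in Proposition~\ref{yypro2.4} to organize the cases by the type ($\mathbb{A}$, $\mathbb{D}$, $\mathbb{E}$, or the degenerate types $\widetilde{\mathbb{L}}_1$, $\widetilde{\mathbb{DL}}$) of the associated Euclidean diagram. First I would establish a general reduction: the desired isomorphism $A\#H\cong \End(A_{A^H})$ is a statement about graded algebras, and there is always a natural graded algebra homomorphism $\Phi\colon A\#H\to\End(A_{A^H})$ sending $a\#h$ to the map $x\mapsto a\,h(x)$. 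Since $A$ is a noetherian AS regular domain of dimension two, $A^H$ is noetherian and $A$ is a finitely generated (in fact reflexive/maximal Cohen--Macaulay) $A^H$-module, so $\End(A_{A^H})$ is a noetherian graded algebra with a well-understood Hilbert series. The strategy is then to show $\Phi$ is injective and that the two sides have the same (finite) graded vector space dimension in each degree, forcing $\Phi$ to be an isomorphism. Injectivity of $\Phi$ should follow from inner faithfulness of the $H$-action together with $A^H\subsetneq A$: a nonzero element of $\ker\Phi$ would produce either a nonzero Hopf ideal annihilating $A$ or a contradiction with $A$ being a domain.

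The heart of the argument is the Hilbert series comparison, $H_{A\#H}(t)=H_{\End(A_{A^H})}(t)$. The left side is immediate: $(A\#H)_i=A_i\#H$, so $H_{A\#H}(t)=(\dim_\kk H)\,H_A(t)$. For the right side I would use the results of Section~\ref{yysec3}, which identify $A^H$ (or the relevant Veronese) with the fixed ring of a genuine finite subgroup $\widetilde\Gamma\le\mathrm{SL}_2(\kk)$ acting on a commutative (or skew) polynomial ring. Concretely: in types $\mathbb{D}$ and $\mathbb{E}$, Lemmas~\ref{yylem3.1}--\ref{yylem3.6} give $A^H\cong\kk[u,v]^{\widetilde\Gamma}$ as graded algebras, and moreover give enough module-theoretic information ($A^{(2)}$ isomorphic to $\kk[u,v]^{(2)}$ as $\Gamma$-module algebras) to identify $A$ as an $A^H$-module with $\kk[u,v]$ as a $\kk[u,v]^{\widetilde\Gamma}$-module. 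In the remaining cases — type $\mathbb{A}$ (cases (a) cyclic, (b), (g), (h)) and the degenerate types $\widetilde{\mathbb{L}}_1$, $\widetilde{\mathbb{A}}_1$ (cases (c), (h)) — the Hopf algebra is a group algebra $\kk C_n$ or $\kk C_2$ (or $\kk D_{2n}$ in case (d), already covered by type $\mathbb{D}$) acting by graded automorphisms on $\kk_q[u,v]$ or $\kk_J[u,v]$, so one is in the setting of group actions on noncommutative AS regular algebras, where Auslander-type theorems are already available in the literature (and case (a) with $\widetilde\Gamma$ nonabelian is in type $\mathbb{D}$ or $\mathbb{E}$, hence reduced to the classical Auslander theorem for $\mathrm{SL}_2$-subgroups acting on $\kk[u,v]$). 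I would invoke the known group-case result of Auslander (and its noncommutative extensions, e.g.\ via \cite{CKWZ}) to dispose of these cases.

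Having reduced every case to a commutative or group-theoretic input, the final step is to transport the isomorphism back. For the types $\mathbb{D}$ and $\mathbb{E}$, where $A\#H$ is genuinely noncommutative and not a skew group algebra, one cannot simply cite the classical theorem: instead, I would compare Hilbert series. On the commutative side, classical Auslander gives $\kk[u,v]\#\kk\widetilde\Gamma\cong\End(\kk[u,v]_{\kk[u,v]^{\widetilde\Gamma}})$, and since Section~\ref{yysec3} identifies $A^H$ with $\kk[u,v]^{\widetilde\Gamma}$ and $A$ with $\kk[u,v]$ as modules over these rings (at least up to the Veronese, from which the full statement is recovered since $\End$ of a module is determined by its Veronese data when the Veronese is taken compatibly), we get $H_{\End(A_{A^H})}(t)=H_{\End(\kk[u,v]_{\kk[u,v]^{\widetilde\Gamma}})}(t)=(\dim_\kk\kk\widetilde\Gamma)\,H_{\kk[u,v]}(t)=(\dim_\kk H)\,H_A(t)=H_{A\#H}(t)$, using $\dim_\kk H=\dim_\kk\kk\widetilde\Gamma$ and $H_A=H_{\kk[u,v]}$. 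Combined with injectivity of $\Phi$, this yields the isomorphism. \textbf{The main obstacle} I anticipate is precisely this last transport in types $\mathbb{D}$ and $\mathbb{E}$: Section~\ref{yysec3} only directly controls the \emph{second Veronese} $A^{(2)}$ and the fixed ring $A^H$ as \emph{algebras}, so one must carefully argue that knowing $A^{(2)}\cong\kk[u,v]^{(2)}$ as graded $\Gamma$-module algebras — together with the extension data \eqref{E3.0.1} — is enough to conclude $A\cong\kk[u,v]$ as $H$-module (equivalently $\widetilde\Gamma$-module) \emph{algebras}, and hence to match $\End(A_{A^H})$ with $\End(\kk[u,v]_{\kk[u,v]^{\widetilde\Gamma}})$ on the nose rather than merely up to Veronese; handling the odd-degree graded pieces and verifying that the central $\mathbb{Z}_2$ in \eqref{E3.0.1} acts with the correct sign is the delicate point.
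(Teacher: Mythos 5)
Your overall architecture (case-by-case over Table~1, reducing types $\mathbb{D}$ and $\mathbb{E}$ to the commutative case via Section~\ref{yysec3}) matches the paper's, but the mechanism you propose for closing each case has a genuine gap, and it is exactly the one you flag at the end. The paper never computes $\End(A_{A^H})$ or its Hilbert series; its engine is Lemma~\ref{yylem4.4} (from [BHZ]): the map $A\#H\to\End_{(A^H)^{\op}}(A)$ is an isomorphism if and only if $(A\#H)/(e)$ is finite dimensional, where $e=1\#t$ for $t$ an integral. Your route instead requires knowing $H_{\End(A_{A^H})}(t)$, which requires knowing $A$ as a graded right $A^H$-module. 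Section~\ref{yysec3} does not supply this: it identifies $A^{(2)}$ with $\kk[u,v]^{(2)}$ as $\Gamma$-module algebras and $A^H$ with $\kk[u,v]^{\widetilde{\Gamma}}$ as algebras, but says nothing about the odd-degree part of $A$ as an $A^H$-module. Your proposed repair --- upgrading to ``$A\cong\kk[u,v]$ as $H$-module algebras'' --- cannot work, since $A=\kk_{-1}[u,v]$ is noncommutative and $\kk[u,v]$ is not. The paper's Lemma~\ref{yylem4.14} exists precisely to sidestep this: it shows $(A\#H)/(1\#t)$ is finite dimensional if and only if $(A^{(2)}\#\kk\Gamma)/(1\#t')$ is, so that only Veronese-level data is ever needed. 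Your injectivity step is also under-argued: inner faithfulness plus $A$ a domain is not enough; the paper's Lemma~\ref{yylem4.10} requires primeness of $A\#H$.

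Separately, your case accounting is off. Cases ({\tt c}) and ({\tt d}) are group actions but are \emph{not} disposed of by the literature you would cite; the paper proves them by exhibiting explicit elements ($u^2\#1$, $v^2\#1$ in case ({\tt c}); $u^{2n+1}\#1$, $v^{2n+1}\#1$ in case ({\tt d}), via the lengthy Lemma~\ref{yylem4.6}) of the ideal $(e)$ to force finite-dimensionality. Case ({\tt e}), where $H=(\kk D_{2n})^{\circ}$ is commutative but not a group algebra, is missing from your list entirely; the paper handles it through the induced $D_{2n}$-grading on $A$ (Lemma~\ref{yylem4.8}). Finally, within case ({\tt f}) the Hopf algebra $(\mathcal{A}(\widetilde{D}_{2m}))^{\circ}$ is not covered by the $\widetilde{\mathbb{E}}$- or $\mathcal{B}(\widetilde{D}_{2n})$-lemmas of Section~\ref{yysec3}; the paper needs a separate cocycle-twist transfer argument (Lemma~\ref{yylem4.12}) to reduce it to case ({\tt d}). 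As written, your proof would not go through in types $\mathbb{D}$, $\mathbb{E}$, or in cases ({\tt c}), ({\tt d}), ({\tt e}).
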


Some partial results on Auslander's Theorem for group actions on AS 
regular algebras were obtained by  Mori-Ueyama in \cite{MU}. For $A$ 
of dimension 2 we complete the remaining cases where the action is a 
group action, as well as handle the cases where the Hopf algebra is 
not a group algebra. The proof consists of the following case-by-case 
analysis. We start with a well-known fact, which is a special case of 
the original Auslander's Theorem.

\subsection{Auslander's Theorem: Case ({\tt a}):}
\label{yysec4.1}

\begin{proposition} \cite[Proposition~3.4]{Aus62}
\label{yypro4.2} 
Let $(A,H)$ be in case {\rm ({\tt a})} of Theorem {\rm{\ref{yythm0.4}}}. 
Then Conjecture {\rm{\ref{yycon0.2}}} holds. \qed
\end{proposition}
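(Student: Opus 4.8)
In case~(\texttt{a}) one has $A=\kk[u,v]$ and $H=\kk\widetilde{\Gamma}$ for a finite subgroup $\widetilde{\Gamma}\subseteq\mathrm{SL}_2(\kk)$ acting by graded automorphisms, so $A\#H=A\#\kk\widetilde{\Gamma}$ is an ordinary skew group algebra and \eqref{E0.2.1} is precisely the assertion of the classical Auslander theorem \cite[Proposition~3.4]{Aus62} (see also \cite[Theorem~5.12]{LW}); the plan is to indicate the standard proof in this graded setting. Writing $G=\widetilde{\Gamma}$ and $R=A^{G}$, I would work with the natural graded $\kk$-algebra homomorphism
\[
\gamma\colon A\#\kk G\longrightarrow \End(A_{R}),\qquad \gamma(a\#g)(b)=a\,g(b),
\]
which is well defined since each $g\in G$ fixes $R$ pointwise, is multiplicative by a one-line check, and is degree-preserving since $G$ acts in degree zero; the goal is to show that $\gamma$ is bijective.

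First I would record that $G$ is a \emph{small} subgroup of $\mathrm{GL}_2(\kk)$, i.e.\ it contains no pseudo-reflection: a pseudo-reflection $g\neq 1$ has eigenvalues $1$ and $\zeta$ with $\zeta\neq 1$, so $\det g=\zeta\neq 1$, contradicting $g\in\mathrm{SL}_2(\kk)$. Hence the finite extension $R\subseteq A$ is unramified at every height-one prime of $R$, since the inertia group at such a prime is generated by the pseudo-reflections it contains. Next, $R$ is a two-dimensional normal graded domain over which $A$ is finite and maximal Cohen--Macaulay, hence reflexive, so both $A\#\kk G\cong A^{\oplus |G|}$ and $\End(A_{R})$ are finitely generated reflexive graded $R$-modules. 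Since a homomorphism of reflexive modules over a normal domain is an isomorphism as soon as it becomes one after localizing at every height-one prime, it suffices to check that $\gamma_{\mathfrak p}$ is an isomorphism for each height-one prime $\mathfrak p$ of $R$.

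For this codimension-one step I would pass to the localizations $R_{\mathfrak p}\subseteq A_{\mathfrak p}$: by the previous paragraph this finite extension is unramified, hence étale as $\ch\kk=0$, with $A_{\mathfrak p}^{G}=R_{\mathfrak p}$ and $G$ acting transitively on its maximal ideals because $\mathrm{Frac}(A)/\mathrm{Frac}(R)$ is $G$-Galois (the $G$-action on $A$ being faithful by inner faithfulness). Over the residue field $k(\mathfrak p)$ the algebra $A\otimes_{R}k(\mathfrak p)$ is then a product of separable field extensions, each Galois over $k(\mathfrak p)$ for its decomposition group, and the classical identification of the skew group algebra of a Galois field extension with an endomorphism ring shows that $\gamma_{\mathfrak p}$ is an isomorphism modulo $\mathfrak p$; since $\gamma_{\mathfrak p}$ is injective (the automorphisms in $G$ are linearly independent over $\mathrm{Frac}(A)$) and both sides are finite free $R_{\mathfrak p}$-modules of rank $|G|^{2}$, Nakayama upgrades this to an isomorphism over $R_{\mathfrak p}$. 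This finishes the verification in codimension one, so $\gamma$ is a graded algebra isomorphism and \eqref{E0.2.1} holds. I expect the only step with genuine content to be this codimension-one computation — identifying the localized extension as étale and invoking the Galois-descent identification of a skew group algebra with an endomorphism ring — while everything else is routine bookkeeping with reflexive modules; indeed the whole assertion is subsumed by \cite[Proposition~3.4]{Aus62} once the smallness of $\widetilde{\Gamma}$ is observed.
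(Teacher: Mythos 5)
Your argument is correct, and it matches what the paper does: the paper offers no proof of this proposition beyond citing it as a special case of the classical Auslander theorem \cite[Proposition~3.4]{Aus62}, and your write-up is precisely the standard proof of that cited result (smallness of $\widetilde{\Gamma}\subseteq\mathrm{SL}_2(\kk)$, reduction to codimension one via reflexivity over the normal invariant ring, and the unramified/Galois identification of the skew group algebra with the endomorphism ring, as in \cite[Theorem~5.12]{LW}). Nothing further is needed.
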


The following cases were proved by Mori-Ueyama \cite{MU}.

\subsection{Auslander's Theorem: Cases ({\tt b}), ({\tt g}), and ({\tt h}):}
\label{yysec4.2}

\begin{proposition}
\label{yypro4.3} \cite[Theorem 4.5(1)]{MU}
Let $(A,H)$ be in cases {\rm  ({\tt b})},{\rm  ({\tt g})},{\rm  ({\tt h})} of Theorem~{\rm{\ref{yythm0.4}}}. 
Then Conjecture {\rm{\ref{yycon0.2}}} holds. \qed
\end{proposition}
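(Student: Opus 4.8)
The plan is to prove directly that the natural map is a graded algebra isomorphism, uniformly across the three diagonal cyclic actions. First I would record the map: since $A$ is a right $A^H$-module by multiplication and a left $A\#H$-module via $(a\#h)\cdot x=a\,h(x)$, and since $\sum h_1(x)h_2(y)=h(x)y$ whenever $y\in A^H$, there is a graded algebra homomorphism
\[
\psi\colon A\#H\longrightarrow \End(A_{A^H}),\qquad \psi(a\#h)(x)=a\,h(x),
\]
with $\psi(a\#h)$ raising degree by $\deg a$. The whole problem is to show $\psi$ is bijective, and since both sides are locally finite graded algebras it suffices to establish injectivity together with an equality of Hilbert series: an injective graded homomorphism whose source and target have equal finite-dimensional graded components is an isomorphism, which gives \eqref{E0.2.1}.

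For injectivity, note that in each of cases ({\tt b}), ({\tt g}), ({\tt h}) the generator $g$ of $C_n$ acts diagonally, $g\cdot u=\zeta u$ and $g\cdot v=\zeta^{-1}v$ with $\zeta$ a primitive root of unity, so the distinct powers of $g$ are distinct graded automorphisms of the noetherian domain $A$ (Example~\ref{yyex1.2}). Writing a kernel element as $\sum_t a_t\#g^t$ and evaluating $\psi$ on all of $A$, the Artin--Dedekind independence of automorphisms over the graded Ore quotient division ring $Q=\operatorname{Fract}A$ forces every $a_t=0$. Hence $A$ is a faithful $A\#H$-module and $\ker\psi=0$.

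For the Hilbert series the source is immediate: $(A\#H)_i=A_i\otimes H$ gives $H_{A\#H}(t)=(\dim_\kk H)\,H_A(t)=n/(1-t)^2$. For the target I would use the eigenspace decomposition $A=\bigoplus_{j\in\mathbb Z/n}A_{(j)}$ under the diagonal $C_n$-action, each $A_{(j)}$ being a graded $A^H$-bimodule with $A_{(0)}=A^H$, so that $\End(A_{A^H})=\bigoplus_{i,j}\Hom_{A^H}(A_{(i)},A_{(j)})$. Passing to $Q$, every homogeneous component $Q_{(j)}$ is free of rank one over $Q^H=Q_{(0)}$, so $\Hom_{Q^H}(Q_{(i)},Q_{(j)})$ is exactly left multiplication by $Q_{(j-i)}$ and is degree-preserving. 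Granting the identification $\Hom_{A^H}(A_{(i)},A_{(j)})=\{\,m\in Q_{(j-i)}:mA_{(i)}\subseteq A_{(j)}\,\}=A_{(j-i)}$, for each fixed $i$ the sum over $j$ reproduces $A$, whence $\End(A_{A^H})\cong A^{\oplus n}$ as graded vector spaces and its Hilbert series is again $n/(1-t)^2$. Combined with injectivity, $\psi$ is an isomorphism.

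The main obstacle is the last identification, namely that $mA_{(i)}\subseteq A_{(j)}$ already forces $m\in A_{(j-i)}$ rather than merely $m\in Q_{(j-i)}$; this is a reflexivity (codimension-one) statement for $A$ as an $A^H$-module, exactly as in the classical Auslander argument, where the crossed-product picture is valid only away from the ramification locus. Here I would deduce it from the fact that $A^H$ is a quantum Kleinian singularity (Theorem~\ref{yythm0.5}): being of the form $C/C\Omega$ for a three-dimensional AS regular algebra $C$ and a normal element $\Omega$, it is a noetherian normal Cohen--Macaulay domain over which $A$ is a maximal Cohen--Macaulay, hence reflexive, module, so the Hom-modules above cannot grow beyond $A_{(j-i)}$. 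Case ({\tt h}), where $A=\kk_J[u,v]$ and $A^H=A^{(2)}$, and case ({\tt g}) with $q^2\neq 1$ require the same reflexivity input; only the explicit presentations of $A^H$ differ.
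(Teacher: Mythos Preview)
The paper gives no proof of this proposition at all: it is stated with a terminal \qed and attributed entirely to \cite[Theorem~4.5(1)]{MU}. So your attempt is not a comparison against the paper's argument but an independent proof, and it should be judged on its own merits.

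Your strategy (injectivity of $\psi$ plus equality of Hilbert series) is sound in outline, and the injectivity step via independence of the distinct automorphisms $g^t$ over the graded quotient division ring is fine. The difficulty is exactly where you flag it: the identification
\[
\{\,m\in Q_{(j-i)}:mA_{(i)}\subseteq A_{(j)}\,\}=A_{(j-i)}.
\]
Your proposed resolution has two problems. First, you invoke Theorem~\ref{yythm0.5}, which in the paper is proved in Section~\ref{yysec5}, after the Auslander theorem; while there is in fact no circularity (the cases ({\tt b}),({\tt g}),({\tt h}) of Theorem~\ref{yythm5.2} are handled by external references \cite{KKZ2,KKZ5} and do not use Section~\ref{yysec4}), you would need to say so explicitly. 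Second, and more seriously, the sentence ``it is a noetherian normal Cohen--Macaulay domain'' does not parse in this noncommutative setting: there is no ambient notion of normality for $A^H$ here, and the passage from ``$A$ is MCM over $A^H$, hence reflexive'' to the displayed equality is not justified. Reflexivity of $A$ gives $A\cong A^{**}$, not the eigencomponent-by-eigencomponent statement you need; the classical Auslander argument uses reflexivity together with a codimension-one comparison (freeness of the action away from a closed set of codimension $\geq 2$), and you have not supplied the noncommutative analogue of that step.

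A route much closer to the rest of Section~\ref{yysec4}, and almost certainly what Mori--Ueyama do, is to bypass Hilbert series entirely and verify the criterion of Lemma~\ref{yylem4.4}: show that $(A\#\kk C_n)/(e)$ is finite dimensional. With $g\cdot u=\zeta u$, $g\cdot v=\zeta^{-1}v$ and $e=\frac{1}{n}\sum_i 1\#g^i$, one computes $(u^{n-j}\#1)\,e\,(u^{j}\#1)=u^{n}\#\bigl(\frac{1}{n}\sum_i\zeta^{ij}g^i\bigr)$, and summing over $j=0,\dots,n-1$ gives $u^n\#1\in(e)$; symmetrically $v^n\#1\in(e)$. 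Since $A_{\geq 2n-1}\subseteq (u^n,v^n)$, the quotient is finite dimensional and Lemma~\ref{yylem4.4} finishes. This works uniformly for $\kk_q[u,v]$ (any $q$) and for $\kk_J[u,v]$ with $n=2$, and avoids the reflexivity issue altogether.
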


\subsection{Auslander's Theorem: Cases ({\tt c}) and ({\tt d}):}
\label{yysec4.3}

To complete the next cases we need a result from \cite{BHZ}, which is a 
generalization of \cite[Theorem 3.10]{MU}. Throughout let $t$ be an 
integral of a semisimple Hopf algebra $H$. 

\begin{lemma}
\label{yylem4.4} 
\cite[Theorem 0.3]{BHZ} 
Let $A$ be a noetherian AS regular algebra of dimension 2,
and $H$ act on $A$ inner faithfully and homogeneously.
Let $e=1\# t \in A\# H$. The following conditions are equivalent
\begin{enumerate}
\item[(1)]
$(A\# H)/(e)$ is finite dimensional.

\smallskip

\item[(2)]
The natural algebra homomorphism
$$A\# H\to \End_{(A^H)^{\op}} (A)$$
is an isomorphism. Namely, \eqref{E0.2.1} holds. \qed
\end{enumerate} 
\end{lemma}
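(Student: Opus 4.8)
The plan is to deduce this from the ``noncommutative Auslander theorem'' of Bao--He--Zhang \cite{BHZ}. After rescaling $t$ so that $\epsilon(t)=1$, the element $e=1\#t\in A\#H$ is an idempotent with $e(A\#H)e\cong A^H$ and $(A\#H)e\cong A$ as $(A\#H,A^H)$-bimodules; under these identifications the map in (2) is left multiplication $A\#H\to\End_{(A^H)^{\op}}\big((A\#H)e\big)$, while $(e)=(A\#H)e(A\#H)$ is exactly the image of the multiplication map $(A\#H)e\otimes_{A^H}e(A\#H)\to A\#H$, so that $(A\#H)/(e)$ is its cokernel. Thus the two conditions are a ``size'' statement about this cokernel and a ``fullness'' statement about the map into $\End_{(A^H)^{\op}}(A)$, and I would connect them by localizing over $A^H$.

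First, the map $A\#H\to\End_{(A^H)^{\op}}(A)$ is always injective: $A\#H$ is prime (because $A$ is a noetherian domain and the cosemisimple Hopf algebra $H$ acts inner faithfully), and $A\cong(A\#H)e\neq 0$, so $A\#H$ acts faithfully on it. Both $A\#H$ (which is $A^{\oplus\dim_\kk H}$ as an $A^H$-module) and $\End_{(A^H)^{\op}}(A)$ are then module-finite and reflexive (``$S_2$'') over $A^H$, which by Theorem~\ref{yythm0.5} is an AS-Gorenstein algebra of GK-dimension $2$ that is regular in codimension one. Now for (1)$\Rightarrow$(2): if $(A\#H)/(e)$ is finite dimensional its $A^H$-support is $0$-dimensional, so it vanishes after localizing at any height-one prime $\mathfrak p$ of $A^H$; there $e$ generates the unit ideal, hence $(A\#H)_{\mathfrak p}e$ is a progenerator and Morita theory gives $(A\#H)_{\mathfrak p}\xrightarrow{\ \sim\ }\End_{(A^H)_{\mathfrak p}^{\op}}\big((A\#H)_{\mathfrak p}e\big)$. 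So our injective map is an isomorphism in codimension one, and a map between reflexive $A^H$-modules that is injective and an isomorphism in codimension one is an isomorphism; this yields \eqref{E0.2.1}.

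For the converse (2)$\Rightarrow$(1): transporting $(e)$ across the isomorphism $A\#H\cong\End_{(A^H)^{\op}}(A)$ identifies it with the two-sided ideal generated by the idempotent $\pi\in\End_{(A^H)^{\op}}(A)$, $a\mapsto t(a)$, which projects $A$ onto its summand $A^H$; hence $(A\#H)/(e)\cong\End_{(A^H)^{\op}}(A)/(\pi)$, whose $A^H$-support is the locus where $A$ fails to be locally a progenerator over $A^H$. Since $A$ is torsion-free over $A^H$ and $A^H$ is regular in codimension one, $A_{\mathfrak p}$ is free over $(A^H)_{\mathfrak p}$ at every height-one $\mathfrak p$, so $(\pi)$ is the unit ideal there; therefore $\End_{(A^H)^{\op}}(A)/(\pi)$ has $0$-dimensional support over $A^H$, and a finitely generated $A^H$-module with $0$-dimensional support is finite dimensional over $\kk$.

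The main obstacle is the noncommutativity of $A^H$: the assertions ``reflexive plus isomorphism in codimension one implies isomorphism'' and the localizations at height-one primes require a usable theory of $S_2$-modules and of localization over the noncommutative AS-Gorenstein algebra $A^H$ of GK-dimension $2$. One handles this either inside the general apparatus of \cite{BHZ}, where the bound is phrased through the \emph{pertinency} $\GKdim(A\#H)-\GKdim\big((A\#H)/(e)\big)$, which in dimension two equals $\GKdim A=2$ exactly when $(A\#H)/(e)$ is finite dimensional, or by exploiting that $A^H$ is module-finite over a commutative subring --- as is used elsewhere in the paper via Veronese subrings --- thereby reducing the codimension-one bookkeeping to commutative local algebra.
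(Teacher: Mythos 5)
First, a point of comparison: the paper does not prove this lemma at all --- it is quoted from [BHZ, Theorem 0.3] (Bao--He--Zhang) with a \qed, so what you have written is an attempt to reprove the cited theorem rather than something that can be matched against an internal argument. Your outline is the classical commutative Auslander proof (Morita theory after localizing at height-one primes, plus ``injective and an isomorphism in codimension one between reflexive modules implies isomorphism''), and that is indeed the right moral picture. But as a proof it has a genuine gap, which you yourself flag at the end: every essential step --- localization of $A^H$ and of $A\#H$ at ``height-one primes,'' the notion of reflexive ($S_2$) modules over $A^H$, the claim that $A^H$ is ``regular in codimension one,'' and the inference that torsion-freeness forces $A_{\mathfrak p}$ to be free there --- requires machinery that does not exist off the shelf for the noncommutative algebra $A^H$, and building a substitute for it is precisely the content of [BHZ]. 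Deferring these steps to ``the general apparatus of [BHZ]'' makes the argument circular, and the alternative suggestion (reduce to a commutative Veronese subring) is neither carried out nor obviously compatible with tracking the two-sided ideal $(e)$ of $A\#H$. The actual proof in [BHZ] avoids localization entirely: it introduces the pertinency $p(A,H)=\GKdim A-\GKdim\bigl((A\#H)/(e)\bigr)$ and shows, via graded torsion/local cohomology arguments and the AS Gorenstein property of $A\#H$ (so that it has ``depth'' $2$ over itself), that the kernel and cokernel of the Auslander map are torsion modules of GK-dimension at most $\GKdim A-p(A,H)$; both therefore vanish exactly when $p(A,H)\geq 2$, which for $\GKdim A=2$ is condition (1).

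Two smaller problems. You assert that $A\#H$ is prime ``because $A$ is a noetherian domain and $H$ acts inner faithfully''; that implication is not a theorem, and the paper itself only uses the conditional statement (Lemma \ref{yylem4.10}, i.e.\ [BHZ, Lemma 3.10]) that \emph{if} $A\#H$ is prime then the Auslander map is injective --- indeed in Lemma \ref{yylem4.12}(9) primeness is \emph{deduced from} the isomorphism \eqref{E0.2.1}, not assumed. Also, invoking Theorem \ref{yythm0.5} to conclude that $A^H$ is AS Gorenstein smuggles in the trivial-homological-determinant hypothesis, which is not part of the lemma as stated. So the proposal correctly identifies the structure of the statement but does not constitute a proof of it.
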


\begin{proposition}
\label{yypro4.5}
Let $(A,H)$ be as in Theorem {\rm{\ref{yythm0.4}}}{\rm{({\tt c})}}. 
Then Conjecture {\rm{\ref{yycon0.2}}} holds.
\end{proposition}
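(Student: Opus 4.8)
The plan is to verify condition (1) of Lemma~\ref{yylem4.4}, namely that $(A\#H)/(e)$ is finite dimensional, where $e = 1\#t$ and $t$ is an integral of $H = \kk C_2$, and then invoke the lemma to conclude that \eqref{E0.2.1} holds. In case ({\tt c}) we have $A = \kk_{-1}[u,v]$ with the nondiagonal $C_2$-action: writing $x = u+v$ and $y = u-v$, the generator $g$ of $C_2$ fixes $x$ and negates $y$. The integral of $\kk C_2$ is $t = \frac{1}{2}(1+g)$, so $e = \frac12(1\#1 + 1\#g)$.

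First I would compute the fixed ring $A^H = A^{C_2}$ explicitly. Since $g$ acts on $A_1 = \kk x \oplus \kk y$ by $x\mapsto x$, $y \mapsto -y$, and since $A = \kk_{-1}[u,v]$ can be re-presented in terms of $x,y$ (one checks the relation becomes something like $xy + yx = $ a scalar multiple of $x^2$ or similar, using $vu = -uv$), the invariants $A^H$ will be generated by $x$, $y^2$, and $xy$ (or $yx$), modulo the relations inherited from $A$. This realizes $A^H$ as $C/C\Omega$ for an AS regular algebra $C$ of dimension $3$ and a normal element $\Omega$, consistent with Theorem~\ref{yythm0.5}; in fact this is the $\widetilde{\mathbb L}_1$ / type $\mathbb D$-flavored case from Table~2. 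Then I would show $A$ is a finitely generated $A^H$-module (which follows because $H$ is finite dimensional and semisimple, so $A$ is integral over $A^H$ in the appropriate graded sense, and $A$ is noetherian).

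The key computation is to show the image of $e$ generates a two-sided ideal of $A\#H$ whose quotient is finite dimensional. The standard approach: the two-sided ideal $(e)$ in $A\#H$ contains $AeA$, and $Ae A = A(1\#t)A$. Using $(1\#t)(a\#1) = \sum t_1(a)\# t_2$ and the properties of the integral, one shows $A(1\#t)A \supseteq A\cdot t(A)\cdot (1\#t)\cdot A$ and more usefully that modulo $(e)$ the algebra $A\#H$ is a quotient of $A\otimes_{A^H} A$ up to finite-dimensional corrections — concretely, $(A\#H)/(e)$ surjects onto or is closely tied to $A/(A_{\geq 1}^H)A$, the quotient of $A$ by the ideal generated by the positive-degree invariants. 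So the crux reduces to: \emph{the Hilbert series argument} showing $A/A A^H_{\geq 1}$ is finite dimensional, equivalently that $A^H$ has the right Hilbert series (the "correct" dimension and multiplicity) so that $A$ is a finite $A^H$-module with the augmentation ideal cutting $A$ down to a finite-dimensional space. This is where one uses that $A$ is AS regular of dimension $2$ and $A^H$ has Gelfand-Kirillov dimension $2$.

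The main obstacle will be the bookkeeping in the smash product: carefully identifying the ideal $(e)$ and proving $(A\#H)/(e)$ is finite dimensional without circularity. Since $H = \kk C_2$ is tiny, I expect this to be very tractable — one can essentially write down $A\#H$ on generators $x, y, g$ with explicit relations, set $1+g = 0$ (i.e.\ kill $e$, which forces $g = -1$), and directly check that the resulting algebra $\langle x,y \mid \text{relations},\ g=-1\rangle$ — where now $g=-1$ must still satisfy $gy = -yg$, forcing $y\cdot(-1) = -(-1)y$, i.e.\ $-y = y$, so $2y = 0$, hence $y = 0$ — collapses to $\kk[x]/(\text{relation})$, which is finite dimensional because the $A$-relation forces $x$ nilpotent once $y=0$ (e.g.\ $x^2 = 0$ or $x^2 = \lambda yx = 0$). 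Thus $(A\#H)/(e)$ is finite dimensional, and Lemma~\ref{yylem4.4} gives the result. The only care needed is matching the precise form of the relations of $\kk_{-1}[u,v]$ in the $x,y$ coordinates, and confirming the integral $t$ and idempotent $e$ behave as claimed; both are short direct computations. The same strategy, with $A = \kk_{-1}[u,v]$ and $H = \kk D_{2n}$, handles case ({\tt d}) in the next proposition, using the $\widetilde{\mathbb D}$-type structure and Lemma~\ref{yylem3.6} to control $A^H$.
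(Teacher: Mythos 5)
Your proposal is correct and follows essentially the same route as the paper: both reduce to condition (1) of Lemma~\ref{yylem4.4} and verify by a direct computation in $A\#\kk C_2$ that the ideal $(e)$ has finite codimension. The one step you defer does work out --- in the eigenbasis $x=u+v$, $y=u-v$ the relation of $\kk_{-1}[u,v]$ becomes $x^2=y^2$, so once $g\equiv -1$ modulo $(e)$ forces $y=0$ you get $x^2=0$ and the quotient is spanned by $1,x$; the paper performs the equivalent computation in the $u,v$ basis, showing $u^2\#1,\,v^2\#1\in(e)$ by multiplying $e$ by degree-one elements.
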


\begin{proof} In this case $A=\kk_{-1}[u,v]$ and $G = C_2$. Let $I$ be the ideal $(e)$. By Lemma \ref{yylem4.4},
it suffices to show that $(A\# \kk C_2)/I$ is finite dimensional. 

\smallbreak We claim that $u^2\# 1, v^2\# 1\in I$. Supposing that the claim 
holds, since $A_{\geq 3}$ is in the ideal generated by $u^2, v^2$, 
then $I$ contains $(A\# \kk C_2)_{\geq 3}$. Therefore $(A\# \kk C_2)/I$ 
is finite dimensional, as desired.

\smallbreak To prove the claim, write $C_2=\{1,g\}$, where $1$ is the identity 
of $C_2$ and $g^2=1$, and, replacing $e$ by $2e$, it suffices to let $e := 1 \#(1+g)$. Using the notation in Theorem \ref{yythm0.4}, 
$g(u)=v$ and $g(v)=u$, $e (u\# 1)=u\# 1+ v\# g$ and 
$(v\# 1)e =v\# 1+ v\# g$, and hence, both are in $I$.  Hence $ (u-v)\# 1\in I$. 
Now $(u\# 1)((u-v)\# 1)=(u^2-uv)\# 1$ and $((u-v)\# 1) (u\# 1)=
(u^2+ uv)\# 1$ are in $I$. Thus $u^2\# 1\in I$, and by symmetry,
$v^2\# 1\in I$. Therefore we have verified the claim, and, consequently,
the assertion of the proposition.
\end{proof}

The next lemma provides some of the computation needed in the proof 
of case~({\tt d}), which is similar to case ({\tt c}), but more complicated.   In case  ({\tt d}) $A=\kk_{-1}[u,v]$ and $H =\kk D_{2n}$.
Let 
$$\sigma:=\begin{pmatrix} 
0& 1\\ 1& 0\end{pmatrix} \quad \text{and} \quad g:=\begin{pmatrix}
\xi & 0\\ 0& \xi^{-1}\end{pmatrix}$$ where $\xi$ 
is a primitive $n$-th root of unity. The dihedral group $D_{2n}$ of order $2n$
is generated by $\sigma$ and $g$. Denote
$$\begin{aligned}
G_d &= 1+(\xi)^d g+ (\xi)^{2d} g^2 +\cdots (\xi)^{(n-1)d} g^{n-1},\\
F_d &= G_d \sigma.
\end{aligned}
$$
Recall that $e=1\#(G_0+F_0)$ and $I$ is the ideal $(e)$. 

\begin{lemma}
\label{yylem4.6} 
Retain the  notation above and assume the hypotheses as in case 
{\rm({\tt d})} of Theorem {\rm{\ref{yythm0.4}}}.
\begin{enumerate}
\item[(1)]
$G_{d}=G_{n+d}$ and $F_{d}=F_{n+d}$ for all $d$.

\smallskip

\item[(2)]
$(1\# G_d)(u^i\# 1)=u^i\# G_{d+i}$ and $(1\# G_d)(v^i\# 1)= v^i\# G_{d-i}$
for all $i$.

\smallskip

\item[(3)]
$(1\# F_d)(u^i\# 1)=v^i\# F_{d-i}$ and $(1\# F_d)(v^i\# 1)= u^i\# F_{d+i}$
for all $i$.

\smallskip

\item[(4)]
$e (u^i\# 1)=u^i \# G_i+ v^i \# F_{-i}$ for all $i$.

\smallskip

\item[(5)]
$(u^{n-i}\# 1) e (u^i\#1)= u^n\#G_i+ u^{n-i} v^{i}\# F_{-i}$ for all $i$.

\smallskip

\item[(6)]
$(v^i\#1) e (u^{n-i}\# 1)=v^i u^{n-i}\# G_{-i}+ v^n \# F_i$ for all $i$.

\smallskip

\item[(7)]
$(u^n-(-1)^{i(n-i)}v^n)\# G_i\in I$ for all $i$.

\smallskip

\item[(8)]
$e(uv\# 1)=uv \# G_0-uv \# F_0$.

\smallskip

\item[(9)]
$u^{n+1} v\# 1\in I$, and by symmetry, $uv^{n+1}\# 1\in I$.

\smallskip

\item[(10)]
If $n$ is even, $(u^{2n}-v^{2n})\# 1\in I$. If $n$ is
odd, $(u^{2n}+v^{2n})\# 1\in I$. 

\smallskip

\item[(11)]
$u^{2n+1}\# 1\in I$, and by symmetry, $v^{2n+1}\# 1\in I$.
\end{enumerate}
\end{lemma}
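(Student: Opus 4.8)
The strategy is a direct computation in the smash product $A\#\kk D_{2n}$ using the identities (1)--(8) already established, organized so that each part feeds into the next. The key observation is that $I$ contains all of $(A\#\kk D_{2n})_{\geq N}$ for some $N$ once we show that sufficiently high powers of $u$ and $v$ (times the identity group element) lie in $I$; since $A_{\geq N}$ is spanned by monomials $u^i v^j$ with $i+j \geq N$, and $A_{\geq N}$ is contained in the two-sided ideal of $A$ generated by $u^{2n+1}$ and $v^{2n+1}$ together with enough ``mixed'' monomials, the finite-dimensionality of $(A\#\kk D_{2n})/I$ will follow from parts (9) and (11). So the real work is to prove (9), then (10), then (11), each time by multiplying the elements produced by earlier parts on the left and right by suitable $u^i\#1$ and $v^j\#1$ and taking $\kk$-linear combinations to cancel the group-algebra tails $G_i$, $F_i$.

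\textbf{Key steps in order.} First I would prove (9): apply (8) and then hit $uv\#1$ on left and right by appropriate powers of $u$ and $v$ to produce $u^{n+1}v\#1$ modulo $I$; concretely, $(u^n\#1)e(uv\#1) = u^{n+1}v\#G_0 - u^{n+1}v\#F_0$ and a companion product involving $(1\#F_0)$ or $(v^{?}\#1)$ on the other side, so that adding the two kills the $F_0$-tail and, using $1\#(G_0+F_0)=e$ together with $(1\#G_0)$ acting as an ``averaging'' idempotent (up to scalar) in degree-zero, one isolates $u^{n+1}v\#1 \in I$. The symmetric statement $uv^{n+1}\#1\in I$ follows by interchanging $u \leftrightarrow v$, $\sigma$ fixed, $g \leftrightarrow g^{-1}$. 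Next, (10): from (7) with a well-chosen value of $i$ one gets $(u^n - (-1)^{i(n-i)} v^n)\#G_i \in I$; squaring (multiplying by $(u^n \pm v^n)\#1$ on one side and using (2)--(3) to move the group element through) produces $(u^{2n} \mp v^{2n})\#(\text{something})$, and then post-multiplying by the averaging element $1\#G_0$ isolates $(u^{2n}-v^{2n})\#1$ when $n$ is even and $(u^{2n}+v^{2n})\#1$ when $n$ is odd — the sign bookkeeping comes from $(-1)^{i(n-i)}$ and from the $-1$ commutation relation $vu=-uv$ in $A=\kk_{-1}[u,v]$. Then (11): combine (9) with (10). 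From (9), $u^{n+1}v\#1 \in I$, and multiplying by $u^{n-1}\#1$ and using $vu^{n-1} = (-1)^{n-1}u^{n-1}v$ gives $u^{2n}v\#1 \in I$; similarly $uv^{2n}\#1\in I$. Now multiply $(u^{2n}\mp v^{2n})\#1$ (from (10)) on the right by $u\#1$: this yields $u^{2n+1}\#1 \mp v^{2n}u\#1$, and $v^{2n}u\#1 = (-1)^{2n}uv^{2n}\#1 = uv^{2n}\#1 \in I$; hence $u^{2n+1}\#1 \in I$, and $v^{2n+1}\#1\in I$ by symmetry. Finally, conclude via Lemma~\ref{yylem4.4}: the ideal $I$ contains $u^{n+1}v\#1$, $uv^{n+1}\#1$, $u^{2n+1}\#1$, $v^{2n+1}\#1$, hence contains $u^i v^j \# 1$ for all $(i,j)$ with $i+j$ large (any such monomial is divisible, in $A$, by one of $u^{n+1}v$, $uv^{n+1}$, $u^{2n+1}$, $v^{2n+1}$ once $i+j \geq 2n+1$ say — check the small cases $i \leq n$ or $j\leq n$ land on $u^{n+1}v$ or $uv^{n+1}$ after extracting a power); since $A_i\#\kk D_{2n} = (u^i v^j\#1) \cdot (1\#\kk D_{2n})$-spanned, $(A\#\kk D_{2n})_{\geq N} \subseteq I$, so the quotient is finite-dimensional and \eqref{E0.2.1} holds.

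\textbf{Main obstacle.} The delicate point is the bookkeeping needed to strip off the group-algebra tails $G_i$ and $F_i$ after each multiplication. The elements produced naturally live in $A \otimes \kk D_{2n}$ with nontrivial second tensor factor, and to descend to $A\otimes \kk 1$ one must take precise $\kk$-linear combinations exploiting the relations $G_dG_{d'} = n\,G_{d+d'}\cdot(\text{character sum})$ and $G_d F_{d'}$, $F_dF_{d'}$ identities coming from the definitions $G_d = \sum_j \xi^{jd}g^j$, $F_d = G_d\sigma$ — essentially the orthogonality of characters of $C_n$. I expect the cleanest route is to note that $e$ projects onto the trivial isotypic component and that left/right multiplication by $u^i\#1$, $v^i\#1$ shifts the relevant $C_n$-character by $\xi^i$ (this is the content of (2) and (3)), so that the products in (5), (6), (8) are ``twisted'' averages; the task is to combine a short list of such twisted averages to recover the untwisted monomial $\ast\#1$. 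The sign juggling forced by the $\kk_{-1}$ relation (the $(-1)^{i(n-i)}$ and $(-1)^{n-1}$ factors) and by the parity split in (10) is the part most prone to error, and handling even vs.\ odd $n$ uniformly is where care is required; everything else is a routine, if lengthy, manipulation in $A\#\kk D_{2n}$.
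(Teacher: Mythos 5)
Your overall route is the same as the paper's: parts (1)--(8) are direct computations, the substance is in (9)--(11), and there one conjugates the elements produced by (7) and (8) by powers of $u$ and $v$ and then takes $\kk$-linear combinations to remove the group-algebra tails; your argument for (11) from (9) and (10) is correct and matches the paper's. However, the specific device you propose for stripping the tails in (9) and (10) would fail. From $x\# G_{i_0}\in I$ for a single index $i_0$ one cannot recover $x\#1$: right multiplication by $1\# G_0$ sends $x\# G_{i_0}$ to $x\# G_{i_0}G_0$, and in $\kk C_n$ one has $G_{i_0}G_0=\bigl(\sum_j \xi^{ji_0}\bigr)G_0=n\,\delta_{i_0\equiv 0}\,G_0$, so ``post-multiplying by the averaging element $1\#G_0$'' returns either $0$ or a multiple of $x\#G_0$, never $x\#1$; and since $G_0$ is a zero divisor, $x\#G_0\in I$ does not by itself yield $x\#1\in I$. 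For the same reason, in (10) ``a well-chosen value of $i$'' in part (7) is not enough.

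What is actually needed --- and what the paper does --- is to produce $x\#G_i\in I$ for \emph{every} residue $i$ modulo $n$ and then sum, using $\sum_{i=0}^{n-1}G_i=n\cdot 1$. Concretely, for (9) one first extracts $uv\#G_0$ and $uv\#F_0$ from $I$ by adding and subtracting $e(uv\#1)$ and $(uv\#1)e$, and then $(u^{n-i}\#1)(uv\#G_0)(u^i\#1)=(-1)^i u^{n+1}v\#G_i$ supplies the whole family; for (10) one keeps all $i$ in (7) and either multiplies by the conjugate factor $(u^n+(-1)^{i(n-i)}v^n)\#1$ (when $n$ is even, so $u^n$ and $v^n$ commute) or squares (when $n$ is odd, using $u^nv^n=-v^nu^n$ to cancel the cross terms), obtaining $(u^{2n}\mp v^{2n})\#G_i\in I$ for all $i$ before summing. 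You do gesture at exactly this mechanism in your ``main obstacle'' paragraph (conjugation by $u^i$ shifts the $C_n$-character, and orthogonality does the rest), so the missing idea is within reach, but as written the key steps of (9) and (10) do not close. Finally, note that (1)--(8) are part of the statement and should be verified rather than assumed; they are routine, e.g.\ (7) is the combination $(u^{n-i}\#1)e(u^i\#1)-(-1)^{i(n-i)}(v^i\#1)e(u^{n-i}\#1)(1\#\sigma)$ of (5) and (6).
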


\begin{proof}
(1) This holds because $\xi$ is an $n$-th root of unity.

\smallskip

(2) This follows from the equations 
$$(1\# (\xi^d)^n g^n)(u^i\# 1)
=u^i \# (\xi^{d+i})^n g^n \quad
{\text{and}} \quad (1\# (\xi^d)^n g^n)(v^i\# 1)
=v^i \# (\xi^{d-i})^n g^n$$ 
for all $i,d,n$.

\smallskip

(3) The proof is similar to (2).

\smallskip

(4) By parts (2,3), for all $i$,
$$e(u^i\# 1)=(1\# G_0) (u^i\# 1)+ (1\#F_0)(u^i\# 1)=
u^i\# G_i +v^i\# F_{-i}.$$

(5) This follows from part (4).

\smallskip

(6) This follows from part (4), replacing $i$ by
$n-i$.

\smallskip

(7) By parts (5,6), and the fact that $uv = -vu$,
$$\begin{aligned}
(u^n-&(-1)^{i(n-i)}v^n)\# G_i\\
&=u^n\#G_i+ u^{n-i} v^{i}\# F_{-i}
-(-1)^{i(n-i)}
(v^i u^{n-i}\# G_{-i}+ v^n \# F_i)(1\# \sigma)\\
&=
(u^{n-i}\# 1) e (u^i\#1)-(-1)^{i(n-i)}(v^i\#1) e (u^{n-i}\# 1)
(1\# \sigma) ~~\in I.
\end{aligned}
$$

(8) This is easy to check.

\smallskip

(9) Note that $(uv\# 1)e= uv\# G_0+uv\# F_0$. Combining
this with part (8), we have $uv\# G_0, uv\# F_0\in I$.
Now, for every $i$,
$$(-1)^i u^{n+1} v\# G_i =u^{n-i+1} v u^i 
\# G_i=(u^{n-i}\# 1) (uv \# G_0) (u^i \# 1)\in I.$$
Since $\sum_{i=0}^{n-1} G_i=n$, $u^{n+1} v\# 1\in I$.
By symmetry, $uv^{n+1}\# 1\in I$.

\smallskip

(10) First we assume $n$ is even, and then $u^n$ commutes with
$v^n$. By part (7),
$$(u^{2n}-v^{2n})\# G_i=
((u^n+(-1)^{i(n-i)}v^n)\# 1) (u^n-(-1)^{i(n-i)}v^n)\# G_i) 
\in I,$$
for all $i$, which implies that $(u^{2n}-v^{2n})\# 1\in I$.

Second we assume that $n$ is odd. By parts (1,2), $1\# G_i$ 
commutes with $u^n\# 1$ and $v^n \#1$. Since $u^n v^n =-
v^n u^n$, 
$$((u^n-(-1)^{i(n-i)}v^n)\# G_i)^2=
(u^n-(-1)^{i(n-i)}v^n)^2\# (G_i)^2=
(u^{2n}+v^{2n})\# nG_i.$$
By part (7), $(u^{2n}+v^{2n})\# G_i\in I$ for all $i$. 
Hence $(u^{2n}+v^{2n})\# 1\in I$. 

\smallskip

(11) This follows from parts (9,10).
\end{proof}

\begin{proposition}
\label{yypro4.7}
Let $(A,H)$ be as in 
Theorem {\rm{\ref{yythm0.4}}}{\rm{({\tt d})}}. Then Conjecture 
{\rm{\ref{yycon0.2}}} holds.
\end{proposition}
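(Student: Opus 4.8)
The plan is to invoke Lemma~\ref{yylem4.4}: since $A=\kk_{-1}[u,v]$ is a noetherian AS regular algebra of dimension $2$, and $H=\kk D_{2n}$ acts inner faithfully and homogeneously on $A$, it suffices to show that $(A\# \kk D_{2n})/(e)$ is finite dimensional, where $e = 1\#(G_0+F_0)$ is (a scalar multiple of) the idempotent $1\# t$ built from the integral $t$ of $\kk D_{2n}$. Following the template of Proposition~\ref{yypro4.5}, the strategy is to show that the ideal $I=(e)$ contains all elements of $A\# \kk D_{2n}$ of sufficiently high degree; since $\dim_\kk (A\# \kk D_{2n})_i = (\dim_\kk A_i)\cdot 2n = (i+1)\cdot 2n$ is finite in each degree, producing a bound $N$ with $(A\# \kk D_{2n})_{\ge N}\subseteq I$ immediately gives finite-dimensionality of the quotient.

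First I would collect the computational facts assembled in Lemma~\ref{yylem4.6}: parts (1)--(8) record how the group-algebra elements $G_d, F_d$ interact with the monomials $u^i\# 1$ and $v^i\# 1$ under the smash product multiplication, exploiting the relation $g(u)=\xi u$, $g(v)=\xi^{-1}v$ and $\sigma(u)=v$, $\sigma(v)=u$ of the $D_{2n}$-action on $\kk_{-1}[u,v]$, together with $uv=-vu$. The key outputs I need are: part (9), which shows $u^{n+1}v\# 1\in I$ and (by the $u\leftrightarrow v$ symmetry of the setup) $uv^{n+1}\# 1\in I$; and part (11), which shows $u^{2n+1}\# 1\in I$ and $v^{2n+1}\# 1\in I$. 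These four monomials, together with the fact that $A$ is a polynomial-type algebra in $u,v$, are designed to generate a cofinite ideal in $A$ itself.

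The core of the argument is then a monomial-counting step in $A=\kk_{-1}[u,v]$: because $\{u^a v^b : a,b\ge 0\}$ is a $\kk$-basis of $A$, I need to check that every monomial $u^a v^b$ of large total degree is divisible (in the appropriate two-sided sense inside $A$, up to a unit scalar, since $uv=-vu$) by one of $u^{2n+1}$, $v^{2n+1}$, $u^{n+1}v$, or $uv^{n+1}$. Concretely: if $a\ge 2n+1$ then $u^a v^b \in (u^{2n+1})$; if $b\ge 2n+1$ then $u^a v^b \in (v^{2n+1})$; and if $a\le 2n$ and $b\le 2n$ but $a+b$ is large — say $a+b \ge 2n+2$ — then necessarily $a\ge n+1$ and $b\ge 1$ (or $b\ge n+1$ and $a\ge 1$), so $u^a v^b$ is a scalar multiple of a monomial lying in the ideal $(u^{n+1}v)$ (resp. $(uv^{n+1})$). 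Hence $A_{\ge N}\# 1 \subseteq I$ for, say, $N = 2n+2$, and therefore $(A\# \kk D_{2n})_{\ge N} = A_{\ge N}\# \kk D_{2n} = (A_{\ge N}\# 1)(1\# \kk D_{2n})\subseteq I$. This makes the quotient finite dimensional, and Lemma~\ref{yylem4.4} finishes the proof.

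The main obstacle is not the final monomial bookkeeping but the careful verification of Lemma~\ref{yylem4.6}(9) and (11), i.e. extracting $u^{n+1}v\#1$ and $u^{2n+1}\#1$ from the ideal $(e)$; these rely on the telescoping identity $\sum_{i=0}^{n-1}G_i = n$ (so that summing the relations $(u^{n-i}\#1)(uv\# G_0)(u^i\#1)\in I$ over $i$ collapses the group-algebra part to a scalar) and on the parity case-split $n$ even versus $n$ odd (since $u^n$ and $v^n$ commute iff $n$ is even, whereas for $n$ odd one must square the element $(u^n - (-1)^{i(n-i)}v^n)\# G_i$ to land on $(u^{2n}+v^{2n})\# nG_i$). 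One must also be a little careful that $G_0 + F_0$, rather than the honest normalized integral, still generates the same ideal — this is harmless since $G_0+F_0$ is a nonzero scalar multiple of $\sum_{h\in D_{2n}} h$, exactly as in the reduction "replacing $e$ by $2e$" used in Proposition~\ref{yypro4.5}.
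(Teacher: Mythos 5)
Your proposal is correct and follows essentially the same route as the paper: reduce via Lemma~\ref{yylem4.4} to showing $(A\#\kk D_{2n})/(e)$ is finite dimensional, and use the computations of Lemma~\ref{yylem4.6} to exhibit high-degree monomials in $I$. The only (harmless) difference is that the paper uses just part (11) and the bound $A_{\ge 4n+2}\subseteq (u^{2n+1},v^{2n+1})$, whereas you additionally invoke part (9) to sharpen the degree bound to $2n+2$.
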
 
\begin{proof}
By Lemma \ref{yylem4.6}(11), $u^{2n+1}\# 1, v^{2n+1}\# 1\in I$. 
Since $A_{\geq 4n+2}$ is in the ideal generated by $u^{2n+1}$ and
$v^{2n+1}$, $I$ contains $(A\# \kk G)_{\geq 4n+2}$.
Therefore $(A\# \kk G)/I$ is finite dimensional, as desired.
\end{proof}

\subsection{Auslander's Theorem: Case ({\tt e}):}
\label{yysec4.4}

To handle case ({\tt e}) in Theorem \ref{yythm0.4}, we introduce some 
different techniques. Let $H$ be the dual of the group algebra 
$\kk G$ (where $G=D_{2n}$). Let $\{g_1,\dots, g_m\}$ be the set 
$G$, where $g_1$ is the unit of $G$. The dual basis 
$\{p_{g_1},\dots, p_{g_{m}}\}$
is the complete set of orthogonal idempotents of $H$ that
forms a $\kk$-linear basis of $H$. It is well-known that
$p_{g_1}$ is an integral of $H$, and $\sum_{s=1}^{m} p_{g_{s}}$
is the algebra identity of $H$. The coproduct of $H$ is 
determined by
$$\Delta(p_g)=\sum_{h \in G} p_{h}\otimes p_{h^{-1} g}$$
for all $g\in G$. The (left) $H$-action on $A$ is equivalent to the
(right) $\kk G$-coaction on $A$, that is equivalent to a $G$-grading 
on $A$. Therefore $A$ has a natural $G$-grading induced by 
the $H$-action. For a $G$-homogeneous element $f$ in $A$, its
$G$-grading is denoted by $\deg_G(f)$. For the next two results, 
set $e=1\# t=1\# p_{g_1}$ and $I=(e)$. 

\begin{lemma}
\label{yylem4.8} Retain the notation above.
\begin{enumerate}
\item[(1)]
Let $f$ be a $G$-homogeneous element of $A$ of degree $g\in G$. 
Then  $$e(f\otimes 1)=f\otimes p_{g^{-1}}.$$
\item[(2)]
Suppose $f_1,\dots, f_m$ are homogeneous elements in $A$
such that the set 
$$\{\deg_G(f_d)\cdots \deg_G(f_m)\}_{d=1}^m$$ 
equals $G$. Then the product $(f_1\cdots f_m) \# 1\in I$.
\end{enumerate}
\end{lemma}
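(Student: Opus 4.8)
The plan is to prove both parts directly from the multiplication rule in $A\#H$ together with the defining property of the integral $t=p_{g_1}$ of $H=(\kk G)^\circ$. Recall that for a $G$-homogeneous $f\in A_g$, the right $\kk G$-comodule structure gives $\rho(f)=f\otimes g$, so the left $H$-action is $p_h\cdot f=\delta_{h,g}f$. I would first record this and note how it controls the smash-product multiplication $(1\# p_{g_1})(f\# 1)=\sum_h p_h\cdot f\ \#\ p_{h^{-1}g_1}$.

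For part (1), I would simply expand:
\[
e(f\otimes 1)=(1\# p_{g_1})(f\# 1)=\sum_{h\in G}\bigl(p_h\cdot f\bigr)\#\,p_{h^{-1}g_1}
=\sum_{h\in G}\delta_{h,g}\,f\#\,p_{h^{-1}g_1}=f\#\,p_{g^{-1}g_1}=f\otimes p_{g^{-1}},
\]
using $g_1=1_G$ in the last step. This is routine once the comodule description is in place; no obstacle here.

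For part (2) the idea is an induction on $m$, peeling off factors from the left and using part (1) to "rotate" the grading bookkeeping into the $H$-component. Concretely, I would show by downward induction on $d$ that
\[
(f_d f_{d+1}\cdots f_m)\# 1\cdot(\text{something from }I)\ \in I,
\]
or more cleanly: start from $e(f_m\#1)=f_m\#p_{(\deg_G f_m)^{-1}}$ by (1), then left-multiply successively by $f_{m-1}\#1,\ f_{m-2}\#1,\dots,f_1\#1$. Each left multiplication $(f_d\#1)(x\#p_h)$ multiplies the $A$-part on the left by $f_d$ and, crucially, twists the $H$-index: $(f_d\#1)(x\#p_h)=f_d x\#p_h$ since $A$ sits in degree-zero and the multiplication $(a\#1)(a'\#h')=aa'\#h'$ does not touch $h'$. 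Wait — that means the $H$-part stays $p_{(\deg_G f_m)^{-1}}$ throughout, which is the wrong track. So instead I would multiply on the \emph{right}, or interleave: the correct mechanism is that $e$ must be reapplied (or that we track $(f_d\cdots f_m)\#1$ against $e(f_{d+1}\cdots f_m\#1)$). The clean statement to induct on is: for each $d$,
\[
(f_d f_{d+1}\cdots f_m)\#\,p_{(\deg_G f_d\cdots \deg_G f_m)^{-1}}\ \in\ I,
\]
which for $d=m$ is exactly part (1) applied to $f_m$, hence in $I=(e)$; and the inductive step uses $(f_{d-1}\#1)$ times the previous element together with a fresh application of $e$ to the element $f_{d-1}\cdots f_m\#1$ via (1). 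Then at $d=1$ we get $(f_1\cdots f_m)\#p_{(\deg_G f_1\cdots\deg_G f_m)^{-1}}\in I$; but this alone only gives one $H$-component. To get $(f_1\cdots f_m)\#1\in I$ we need the full hypothesis that $\{\deg_G(f_d)\cdots\deg_G(f_m)\}_{d=1}^m=G$: running the analogous computation with each suffix product gives $(f_1\cdots f_m)\#p_{w^{-1}}\in I$ for every $w$ in that set, i.e.\ for every $w\in G$, and summing over all $g\in G$ gives $(f_1\cdots f_m)\#\bigl(\sum_{g\in G}p_g\bigr)=(f_1\cdots f_m)\#1\in I$.

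The main obstacle is organizing the induction so that the $H$-components produced are exactly the $p_{w^{-1}}$ for $w$ ranging over the prescribed set of suffix-products — one must be careful that left-multiplying by $f_{d-1}\#1$ leaves the $H$-component unchanged, so the "new" component $p_{(\deg_G f_{d-1}\cdots\deg_G f_m)^{-1}}$ has to be generated by a separate application of $e$ (via part (1)) to the shorter product $f_{d-1}f_d\cdots f_m\#1$, and then this must be reconciled with what was already shown to lie in $I$. Once that bookkeeping is set up correctly, the final step — summing over all $g\in G$ and using $\sum_g p_g=1_H$ — is immediate.
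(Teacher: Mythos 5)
Your proposal is correct and, after the self-corrected false start, lands on exactly the paper's argument: apply part (1) to each suffix product $f_d\cdots f_m$ (which is $G$-homogeneous of degree $h_d=\deg_G(f_d)\cdots\deg_G(f_m)$) to get $(f_d\cdots f_m)\# p_{h_d^{-1}}\in I$, left-multiply by $(f_1\cdots f_{d-1})\#1$ --- which leaves the $H$-component untouched --- to get $(f_1\cdots f_m)\# p_{h_d^{-1}}\in I$ for every $d$, and sum over $d$ using $\sum_{w\in G}p_w=1_H$. The inductive scaffolding you set up is unnecessary; the direct computation for each $d$ suffices, which is precisely how the paper phrases it.
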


\begin{proof} (1) By definition,
$$\begin{aligned}
e(f\# 1)=(1\# p_{g_1})(f\# 1)=\sum_{h\in G} p_{h}(f)\# p_{h^{-1}}
=p_{g}(f)\# p_{g^{-1}}=f\# p_{g^{-1}}.
\end{aligned}
$$

(2) Let $h_d=\deg_G(f_d)\cdots \deg_G(f_m)$. By part (1),
$$ e((f_d\cdots f_m)\# 1)=(f_d\cdots f_m) \# p_{h_d^{-1}}\in I$$
for all $d$. Hence $(f_1\cdots f_m) \# p_{h_d^{-1}}\in I$
for all $d$. By hypothesis, $\{h_d\}_{d=1}^m$ equals $G$.
Then $1=\sum_{d=1}^m p_{h_d^{-1}}$, which implies
that $(f_1\cdots f_m)\# 1\in I$.
\end{proof}

Now we are ready to prove Theorem \ref{yythm4.1} for case ({\tt e}).

\begin{proposition}
\label{yypro4.9}
Let $(A,H)$ be in 
Theorem {\rm{\ref{yythm0.4}}}{\rm{({\tt e})}}. Then Conjecture~{\rm{\ref{yycon0.2}}} holds.
\end{proposition}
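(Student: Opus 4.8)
The plan is to apply Lemma~\ref{yylem4.4}, so it suffices to show that $(A\# H)/I$ is finite dimensional, where $A = \kk_{-1}[u,v]$, $G = D_{2n}$, $H = (\kk G)^{\circ}$, and $I = (e)$ with $e = 1\# p_{g_1}$. The key is the $G$-grading on $A$ induced by the $H$-action, together with Lemma~\ref{yylem4.8}(2): any product $f_1\cdots f_m$ of $G$-homogeneous elements of $A$ whose partial-product degrees $\deg_G(f_d)\cdots\deg_G(f_m)$ range over all of $G$ lies (smashed with $1$) in $I$. So the first step is to read off the $G$-grading explicitly from Table~1 and the comodule structure in Theorem~\ref{yythm0.4}({\tt e}): with $v_1 = u + \sqrt{-1}\,v$ and $v_2 = \sqrt{-1}\,u + v$, we have $\rho(v_1) = v_1\otimes g$ and $\rho(v_2) = v_2\otimes h$, so $\deg_G(v_1) = g$ and $\deg_G(v_2) = h$, where $D_{2n} = \langle g,h \mid g^2 = h^2 = (gh)^n = 1\rangle$. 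Thus $A$ is generated in degree one by the two $G$-homogeneous elements $v_1, v_2$ with gradings the two generating reflections of $D_{2n}$.

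Next I would identify which $G$-homogeneous monomials in $v_1, v_2$ are available: since $A$ is (up to isomorphism) $\kk_{-1}[v_1, v_2]$ (the change of basis from $u,v$ to $v_1,v_2$ is invertible, and one checks the $(-1)$-commutation is preserved, possibly after rescaling), a $\kk$-basis is given by the ordered monomials $v_1^{a} v_2^{b}$, which is $G$-homogeneous of degree $g^a h^b$. The combinatorial heart of the argument is then: for a suitable large integer $N$, every monomial $v_1^{a} v_2^{b}$ of total degree $\geq N$ can be written as a product $f_1\cdots f_m$ of $G$-homogeneous factors whose trailing partial products $\deg_G(f_d\cdots f_m)$ sweep out all of $G$ --- at which point Lemma~\ref{yylem4.8}(2) puts $v_1^{a} v_2^{b}\# 1$ into $I$. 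Since the word $(gh)^n = 1$ and the alternating word $g,h,g,h,\dots$ of length between $1$ and $2n$ realizes every element of $D_{2n}$, a monomial of the form $(v_1 v_2)^{k}$ or $v_1(v_2 v_1)^{k}$ etc.\ with enough alternating factors has its partial products running through all $2n$ group elements; the general monomial $v_1^a v_2^b$ can be split as $v_1^{a-1}\cdot v_1 v_2 \cdot v_2^{b-1}$ (or reordered) so that an internal block of alternating single-letter factors does the sweeping, with the leftover powers absorbed into $f_1$. Concretely, I expect to show $(A\# H)_{\geq N}\subseteq I$ for $N$ on the order of $2n+2$, hence $(A\# H)/I$ is finite dimensional.

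The main obstacle is the bookkeeping in the previous step: one must be careful that the \emph{partial products read from a fixed end} (Lemma~\ref{yylem4.8}(2) uses $h_d = \deg_G(f_d)\cdots\deg_G(f_m)$, i.e.\ suffixes) cover $G$, not merely that the total degree is trivial, and one must handle all residues of $a$ and $b$ modulo the relevant orders. A clean way to organize this is to first dispose of the "mixed" monomials: if both $a\geq 1$ and $b\geq 1$, write $v_1^a v_2^b = (v_1^{a-1} v_2)\cdot(v_1 v_2^{b-1})$ or iterate to extract a long alternating middle; then separately handle the pure powers $v_1^a$ and $v_2^b$ using the relations $v_1^2$ and $v_2^2$ being $G$-homogeneous of trivial degree (since $g^2 = h^2 = 1$) --- these alone do not sweep $G$, so one reduces pure powers to the mixed case by multiplying into adjacent monomials once the degree is large, exactly as was done with $u^{2n+1}\#1$ in the proof of Proposition~\ref{yypro4.7}. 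Once $(A\# H)_{\geq N}\subseteq I$ is established, $(A\# H)/I$ is finite dimensional, Lemma~\ref{yylem4.4} applies, and Conjecture~\ref{yycon0.2} holds in case~({\tt e}).
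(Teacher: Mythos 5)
Your overall strategy is the paper's: reduce via Lemma~\ref{yylem4.4} to showing $(A\#H)/(e)$ is finite dimensional, read off the $G$-grading $\deg_G v_1 = g$, $\deg_G v_2 = h$, and apply Lemma~\ref{yylem4.8}(2) to alternating words whose suffix products sweep out $D_{2n}$. But there is a genuine error in the step where you set up the algebra in the coordinates $v_1, v_2$. You claim that $A \cong \kk_{-1}[v_1,v_2]$ ("the $(-1)$-commutation is preserved, possibly after rescaling") with $\kk$-basis the ordered monomials $v_1^a v_2^b$. This is false: in $\kk_{-1}[u,v]$ one computes $v_1v_2 + v_2v_1 = 2\sqrt{-1}\,(u^2+v^2) \neq 0$, so $v_1$ and $v_2$ do not skew-commute (and no rescaling fixes this, since rescaling only multiplies the anticommutator by a nonzero scalar). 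The correct presentation in these coordinates is $A \cong \kk\langle v_1,v_2\rangle/(v_1^2+v_2^2)$, and $\{v_1^a v_2^b\}$ does not span $A$: already in degree $2$ the three monomials $v_1^2, v_1v_2, v_2^2$ span only a $2$-dimensional space (since $v_2^2=-v_1^2$), missing $v_2v_1$, while $\dim_\kk A_2 = 3$. Consequently your concluding step --- showing every $v_1^a v_2^b$ of large degree lies in $I$ and inferring $(A\#H)_{\geq N}\subseteq I$ --- does not establish finite dimensionality, because you have not accounted for all of $A$.

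The repair requires working with a correct normal form such as $v_1^{a}(v_2v_1)^{b}v_2^{c}$ with $c\in\{0,1\}$. With that in hand the argument closes quickly: the alternating word $f_1\cdots f_{2n}$ with $f_i = v_1$ ($i$ odd), $v_2$ ($i$ even) satisfies the sweeping hypothesis of Lemma~\ref{yylem4.8}(2), so $(v_1v_2)^n\#1\in I$ and by symmetry $(v_2v_1)^n\#1\in I$; since $v_1^2=-v_2^2$ is central, $(v_1v_2)^n(v_2v_1)^n = v_2^{2n}v_1^{2n} = (-1)^n v_1^{4n}$, whence $v_1^{4n}\#1\in I$; and then every normal-form monomial of degree at least $6n+1$ contains either $v_1^{4n}$ or $(v_1v_2)^n$ or $(v_2v_1)^n$ as a factor, so $(A\#H)_{\geq 6n+1}\subseteq I$. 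Your sketch of "splitting $v_1^a v_2^b$ to extract an alternating middle" and handling pure powers separately does not survive the change of presentation, because the problematic basis elements are precisely the long alternating words $(v_2v_1)^b$ flanked by powers, not the two-block monomials you consider.
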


\begin{proof} As in the statement of Theorem \ref{yythm0.4}({\tt e}),
we recycle the variables $u$ and $v$ to denote the generators for $A$ 
satisfying the relation $u^2+v^2=0$. (These are $v_1$ and $v_2$ in 
Theorem \ref{yythm0.4}.) Write $G:=D_{2n}$ as 
$\langle g,h \mid g^2=h^2=(gh)^n=1\rangle$
and $\deg_G u=g$ and $\deg_G v=h$. 

\smallbreak For $1\leq i \leq m:=2n$, let $f_i$ be either $u$ if $i$ is odd, or $v$ if $i$ is even. Then the hypotheses of Lemma \ref{yylem4.8}(2)
hold. As a consequence, $f_1\cdots f_m\# 1\in I$, or $(u v)^n\# 1\in I$.
By symmetry, $(vu)^n\# 1\in I$. Since $(uv)^n (vu)^n=u^{2n}v^{2n}
=(-1)^n u^{4n}$, we have $u^{4n}\# 1\in I$. Since every element
in $A$ is a linear combination of $u^{a} (v u)^{b} v^{c}$
where $c$ is either 0 or 1, $I$ contains the ideal
$(A\# H)_{\geq (6n+1)}$. This implies that $(A\# H)/I$ is finite
dimensional. The assertion follows by Lemma \ref{yylem4.4}.
\end{proof}

\subsection{Auslander's Theorem: Case ({\tt f}):}
\label{yysec4.5}

It remains to consider case ({\tt f}). We need to understand the structure
of the fixed subrings, and we will use the results in Section \ref{yysec3}. 
Let $A=\kk_{-1}[u,v]$ as in  case ({\tt f}),
$H$ be one of the Hopf 
algebras in case~({\tt f}), and let $B=A^H$. Then $A$ is a left and a right
$B$-module. We will be using the right $B$-module structure on $A$,
which is equivalent to a left $B^{\op}$-module structure of $A$.
The following lemma is well-known.

\begin{lemma} \cite[Lemma 3.10]{BHZ}
\label{yylem4.10} Retain the notation above. If $A\# H$
is prime, then the natural algebra map 
$\theta: A\# H\to \End_{B^{\op}}(A)$ is injective. \qed
\end{lemma}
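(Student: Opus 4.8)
The plan is to recall the standard argument that identifies the kernel of $\theta$ with a two-sided ideal of $A\#H$, and then to force that ideal to be zero using primeness. First I would describe the map $\theta$ precisely: an element $a\#h\in A\#H$ acts on $x\in A$ (viewed as a right $B$-module) by $(a\#h)\cdot x = a\, h(x)$, and one checks directly that this is a $B^{\op}$-linear endomorphism of $A$ and that $\theta$ is an algebra homomorphism. The content is that $\ker\theta$ is a two-sided ideal of $A\#H$.

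Next I would locate an element outside $\ker\theta$ to see the kernel is proper. Since $1\in A$ is a free generator of $A$ over itself and $\theta(a\#1)(1)=a$, the restriction $\theta|_A$ is already injective; in particular $A\cap \ker\theta = 0$, so $\ker\theta$ is a proper two-sided ideal. Now I would invoke primeness of $A\#H$: in a prime ring, if $J$ is a two-sided ideal with $J\cap A = 0$ and $A$ is (say) an essential left ideal, or more robustly, if $J$ annihilates a faithful module, then $J=0$. Concretely, $A\#H$ acts on $A$ via $\theta$, and this action is faithful on the subalgebra $A$ side; the cleanest route is to observe that $\ker\theta$ and the ideal generated by $A$ (which is all of $A\#H$, since $A\#H$ is generated by $A$ and $H$ and... no — rather) satisfy $(\ker\theta)\cdot A = 0$ inside $\End_{B^{\op}}(A)$, while $A$ as a left ideal is nonzero; since $A\#H$ is prime and $A$ is a nonzero left ideal, its left annihilator is zero, hence... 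I would instead argue: $\ker\theta$ is a two-sided ideal and the left ideal $(A\#H)(1\#1)=A\#H$ is not annihilated, so one shows $\ker\theta\cdot(A\#1)=0$ forces $\ker\theta=0$ because $A\#1$ generates an essential (indeed all of, on one side) ideal and the ring is prime.

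The honest version of the last step: view $A$ inside $A\#H$; for $\kappa\in\ker\theta$ and any $a\in A$ we have $\theta(\kappa)(a)=0$ in $\End_{B^{\op}}(A)$, which unwinds to say that the element $\kappa$, when we multiply $\kappa\cdot(a\#1)$ and then project via the counit-type evaluation at $1$, vanishes — more precisely $\kappa\cdot(a\#1)$ lies in the kernel of the evaluation-at-$1$ map $A\#H\to A$, which is $\bigoplus_{h}A\#(h-\epsilon(h))$... this shows $\kappa\cdot(A\#1)\subseteq \ker(\text{ev}_1)$. Tightening this: $\theta(\kappa)=0$ means $\kappa$ acts as $0$ on all of $A$, and since $A$ generates $A\#H$ as a left module over itself is false, but $A\#H=\sum_h (A\#1)(1\#h)$, I get $\ker\theta = \{x : x(A)=0\}$ and this is a two-sided ideal whose intersection with the left ideal $A\#1$ has zero product with... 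Ultimately I expect the clean statement to be: $\ker\theta$ is a two-sided ideal, $(\ker\theta)(A\#1)\subseteq \ker\theta$ has the property that its image under $\mathrm{ev}_1$ is $0$, combined with faithfulness of $A$ over $B^{\op}$ one deduces $(\ker\theta)\cdot(A\#H)\cdot(\ker\theta)$ hits a nonzero-then-zero contradiction unless $\ker\theta=0$.

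The main obstacle is making the primeness argument airtight rather than hand-wavy: one must genuinely exhibit two nonzero two-sided ideals (or a nonzero ideal and its opposite-sided partner) with zero product when $\ker\theta\neq 0$. The standard trick (this is exactly the classical argument, e.g. as in Auslander's original proof and its generalizations) is to show $\ker\theta \cap A = 0$ (immediate, since $\theta|_A$ is injective) and then use that $A$ contains a nonzero ideal of $A\#H$, or that the ideal $(A\#H)\,t\,(A\#H)$ interacts with $\ker\theta$; since the excerpt flags this lemma as "well-known" and cites \cite[Lemma 3.10]{BHZ}, I would keep the write-up short, reproduce the two-sided-ideal observation, note $\theta|_A$ injective, and cite the cited source for the primeness deduction, rather than re-deriving it.
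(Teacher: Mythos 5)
The paper gives no argument for this lemma at all: it is stated with a \qed and attributed to \cite[Lemma 3.10]{BHZ}, so your final fallback (``cite the source'') is literally what the paper does. But the sketch you attempt before that has a genuine gap, and it is worth naming. You correctly observe that $\ker\theta$ is a two-sided ideal (it is exactly the annihilator of $A$ as a left $A\#H$-module) and that $\theta|_{A}$ is injective, hence $\ker\theta\cap A=0$. None of the routes you then try actually closes the argument: $A\#1$ is not a two-sided ideal of $A\#H$, so ``$(\ker\theta)(A\#1)=0$ plus primeness'' proves nothing; and $\ker\theta\cap A=0$ is useless on its own, since a nonzero proper ideal of a prime ring can perfectly well meet a subalgebra that is not an ideal in $\{0\}$. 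Primeness only kills $\ker\theta$ once you exhibit a \emph{nonzero two-sided ideal} $J$ with $(\ker\theta)\,J=0$.

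The missing idea is the idempotent $e=1\#t$, where $t$ is the normalized integral of $H$ (this is where semisimplicity enters --- your sketch never uses it, which is a warning sign). Since $ht=\epsilon(h)t$, one computes $(A\#H)e=\{a\#t: a\in A\}$ and $(b\#h)(a\#t)=b\,h(a)\#t$, so $(A\#H)e\cong A$ as left $A\#H$-modules, with the module structure given precisely by $\theta$. Hence $\ker\theta=\lann_{A\#H}\bigl((A\#H)e\bigr)$, and therefore $(\ker\theta)\cdot(A\#H)e(A\#H)=0$. The ideal $(A\#H)e(A\#H)$ is two-sided and nonzero (it contains $e$), so primeness forces $\ker\theta=0$. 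You brush against this at the very end when you mention ``the ideal $(A\#H)\,t\,(A\#H)$,'' but you never make the identification $A\cong(A\#H)e$ that turns the annihilator of a module into the annihilator of an ideal; without that step the primeness deduction does not go through.
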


Moreover, we will use the following well-known lemma in the results below.

\begin{lemma} 
\label{yylem4.11} 
Let $R$ be a connected graded algebra admitting an action by a semisimple 
Hopf algebra $H$. Let $M$ be a graded, bounded below, left $R\# H$-module. 
A minimal projective resolution of the $R\# H$-module $M$ 
is also a minimal free resolution of left $R$-module $M$. \qed
\end{lemma}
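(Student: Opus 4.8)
\textbf{Proof proposal for Lemma~\ref{yylem4.11}.}
The plan is to reduce the statement to a standard fact about smash products: over a semisimple Hopf algebra $H$, the algebra $R\#H$ is free (of finite rank) as a left $R$-module, and moreover the forgetful functor $(R\#H)\mhyphen\Mod \to R\mhyphen\Mod$ sends graded projectives to graded projectives (indeed to graded frees) and is exact. First I would recall that $R\#H \cong R\otimes H$ as a left $R$-module, so $R\#H$ is a graded free left $R$-module (on a $\kk$-basis of $H$, all sitting in appropriate degrees since $\deg h = 0$). Consequently any graded projective left $R\#H$-module $P$ is a graded direct summand of a (possibly infinite) direct sum of shifts of $R\#H$, hence a graded direct summand of a graded free left $R$-module; since $R$ is connected graded, graded projectives over $R$ that are bounded below are graded free, so $P$ is graded free as a left $R$-module. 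This already shows that a graded projective resolution $P_\bullet \to M$ of $R\#H$-modules restricts to a graded free resolution of the $R$-module $M$; the content of the lemma is that \emph{minimality} is preserved.

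Next I would make precise what minimality means in both categories. A complex of graded projectives (bounded below) over a connected graded algebra is minimal iff the differentials have entries in the respective augmentation ideals; equivalently iff applying $-\otimes_{(-)}\kk$ (or $\Hom(-,\kk)$) kills all differentials, so that $\dim_\kk \Ext^i(M,\kk)$ (or $\Tor_i(\kk,M)$) equals the rank of $P_i$. For the graded algebra $R\#H$ the relevant simple-module-at-the-top is $\kk\otimes_R (R\#H)/(R\#H)_{\geq 1} \cong H$ viewed as the ``degree-zero part'' — more usefully, the graded Jacobson radical of $R\#H$ is $R_{\geq1}\#H$, because $H$ is semisimple so it contributes nothing to the radical in degree zero. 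Hence $(R\#H)/J(R\#H) \cong R_0\#H = H$, and a graded projective resolution over $R\#H$ is minimal iff its differentials have entries in $R_{\geq1}\#H$. Now the key observation: $R_{\geq1}\#H \subseteq R_{\geq1}\cdot(R\#H)$, and in fact under the identification $R\#H = \bigoplus_b R\,b$ ($b$ ranging over a homogeneous $\kk$-basis of $H$, all in degree $0$), the ideal $R_{\geq1}\#H$ is exactly $\bigoplus_b R_{\geq1}\,b$, i.e.\ the entrywise-in-$R_{\geq1}$ condition for the $R\#H$-differential translates verbatim into the entrywise-in-$R_{\geq1}$ condition for the restricted $R$-differential. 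Therefore the differentials of $P_\bullet$ have entries in $J(R\#H)$ if and only if, after restricting to $R$ and choosing a compatible $R$-basis, they have entries in $R_{\geq1} = J(R)$.

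The cleanest way to carry this out is via the $\Tor$ (or $\Ext$) criterion rather than chasing matrix entries. Since $H$ is semisimple, $\kk$ is a direct summand of $H$ as an $H$-module, and one has a natural isomorphism of graded $R$-complexes $\kk\otimes_R(P_\bullet) \cong \kk\otimes_{R\#H}\bigl((R\#H)\otimes_R P_\bullet|_R\bigr)$ — more to the point, I would use that for a graded $R\#H$-module $M$ bounded below, $\Tor^{R}_i(\kk, M|_R) \cong \Tor^{R\#H}_i(H, M)$ as graded vector spaces (base change along $R\to R\#H$, using that $R\#H$ is free over $R$), and that $\Tor^{R\#H}_i(H,M) \cong \Tor^{R\#H}_i(\kk,M)\otimes_\kk H$ up to the semisimple structure — it suffices that $\dim_\kk\Tor^{R\#H}_i(\text{top},M)$ computes the rank of $P_i$ over $R\#H$, while $\dim_\kk\Tor^R_i(\kk,M|_R)$ computes the rank of $P_i|_R$ over $R$, and these ranks agree once we account for $\dim_\kk H$. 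A minimal $R\#H$-projective resolution realizes $P_i$ with rank $=\dim$ of the top-$\Tor$; restricting multiplies ranks by $\dim_\kk H$ while $\Tor^R_i(\kk,M|_R)$ also scales by $\dim_\kk H$ via the base-change isomorphism; hence the restricted resolution is forced to be minimal as well, since its ranks already equal the minimal possible ranks over $R$.

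\textbf{Main obstacle.} The routine part is the freeness $R\#H \cong R\otimes H$ over $R$ and exactness of restriction; the subtle point — and the one I would write most carefully — is the precise comparison of ``tops'': identifying $J(R\#H) = R_{\geq1}\#H$ (which uses semisimplicity of $H$ crucially, since otherwise $J(H)\neq0$ would contribute) and checking that the minimality condition is literally the same linear-algebra condition on both sides, equivalently that the base-change isomorphism $\Tor^{R}_*(\kk,M|_R) \cong \Tor^{R\#H}_*(H,M)$ is compatible with the grading and with the rank count. I expect no genuine difficulty beyond bookkeeping, but the statement genuinely fails without $H$ semisimple, so the write-up must pin down exactly where that hypothesis enters.
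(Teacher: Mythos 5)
The paper gives no proof of this lemma at all --- it is stated as ``well-known'' with an immediate \verb|\qed| --- so there is nothing to compare against; your argument is correct and is the standard one. The essential points are exactly the two you isolate: each graded projective $P_i$ over $R\#H$ restricts to a graded free $R$-module (being a summand of shifts of $R\#H\cong R\otimes H$, with bounded-below graded projectives over connected graded $R$ being free), and the graded radical satisfies $J(R\#H)P_i=(R_{\geq1}\#H)P_i=R_{\geq1}P_i$ because $(R\#H)_0=H$ is semisimple, so the minimality condition $d(P_{i+1})\subseteq J(R\#H)P_i$ is literally the condition $d(P_{i+1})\subseteq R_{\geq1}P_i$ over $R$. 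One small caution: in your closing $\Tor$ digression the claim that restriction ``multiplies ranks by $\dim_\kk H$'' is not right in general (indecomposable projectives $(R\#H)e$ have varying $R$-ranks); the clean version is $\Tor_i^{R}(\kk,M|_R)\cong\Tor_i^{R\#H}(H,M)\cong P_i/J P_i$, whose dimension is exactly the $R$-rank of $P_i$ --- but this paragraph is superfluous given your radical argument.
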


Let $H$ be the {Hopf algebra} $({\mathcal A}(\widetilde{D}_{2m}))^{\circ}$, 
which is one of the Hopf algebras of type $\mathbb{D}$ in case ({\tt f}). Note 
that there is other notation for the dicyclic group (or binary 
dihedral group) $\widetilde{D}_{2m}$; in \cite{Ma} it is denoted 
by $T_{4m}$; in \cite{BN}, it is denoted by $\widetilde{D}_{m}$;
in other papers it is denoted by $BD_{4m}$. By \cite[Theorem 4.1(2)]{Ma}, $H$ is a {\it dual cocycle 
twist} of the group algebra $\kk {{D}_{4m}}$. A dual cocycle twist is also 
called a {\it cotwist} in \cite[Definition 2.5]{Mo2}. The next lemma lists 
some general facts about cotwists, some of which is from Montgomery \cite{Mo2}, Davies \cite{Dav1,Dav2}, or Chirvasitu-Smith \cite{CS}.

\begin{lemma}
\label{yylem4.12}
Let $R$ be a connected graded algebra and $H$ be a semisimple
Hopf algebra such that $R$ is a graded $H$-module algebra. 
Let $\Omega\in H\otimes H$ be a dual cocycle \cite[Section 2]{Mo2}, 
and $H^{\Omega}$ be a cotwist of $H$ with respect to $\Omega$
\cite[Definition 2.5]{Mo2}.
Then the following statements hold.
\begin{enumerate}
\item[(1)]
$H$ is a cotwist $(H^{\Omega})^{\Omega^{-1}}$ of $H^{\Omega}$. 

\smallskip

\item[(2)]
The twist of $R$, denoted by $R_{\Omega}$, 
as defined in \cite[Definition 2.6]{Mo2},
is a graded $H^{\Omega}$-module algebra. 

\smallskip

\item[(3)]
There is a graded algebra isomorphism
$$\phi: R_{\Omega}\# H^{\Omega}\longrightarrow  R\# H$$
defined by
$a\# h\mapsto \sum \Omega^1 \cdot a\# \Omega^2 h$
for all $a\in R$ and $h\in H$.

\smallskip

\item[(4)]
There is an equivalence of graded module categories
$$F_{\phi}: \quad 
R_{\Omega}\# H^{\Omega} \mhyphen \Mod\cong R\# H \mhyphen \Mod$$
induced by the isomorphism $\phi$ in part {\rm{(3)}}.

\smallskip

\item[(5)]
If $R$ is noetherian, then so is $R_{\Omega}$.

\smallskip

\item[(6)]
$\gldim R=\gldim R_{\Omega}$ and $\GKdim R=\GKdim R_{\Omega}$.

\smallskip

\item[(7)]
$R$ is AS regular if and only if $R_{\Omega}$ is AS regular.

\smallskip

\item[(8)]
Suppose $R$ is AS regular.
If the $H$-action on $R$ 
has trivial homological determinant, then so does 
the $H^{\Omega}$-action on $R_{\Omega}$.

\smallskip

\item[(9)]
Let $R$ and $R_{\Omega}$ be locally finite graded domains.
If $R\# H\to \End_{(R^H)^{\op}}(R)$ is an isomorphism, 
then $R_{\Omega}\# H^{\Omega}\to 
\End_{(R_{\Omega}^{H^{\Omega}})^{\op}}(R_{\Omega})$
is an isomorphism.  
\end{enumerate}
\end{lemma}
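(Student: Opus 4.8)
The plan is to prove Lemma~\ref{yylem4.12} part by part, treating (9) as the culmination and main obstacle; parts (1)--(8) are prerequisites, most of which are either literature citations or routine checks. For (1), the inverse dual cocycle $\Omega^{-1}$ is standard: one verifies it satisfies the dual cocycle condition for $H^\Omega$ and that $(H^\Omega)^{\Omega^{-1}}=H$ by unwinding the definitions of the twisted comultiplication (this is in \cite{Mo2} or \cite{Dav1,Dav2}). For (2), one checks that $R_\Omega$, with its twisted multiplication $a\cdot_\Omega b=\sum(\Omega^1\cdot a)(\Omega^2\cdot b)$ (or the appropriate convention from \cite[Definition 2.6]{Mo2}), is indeed an $H^\Omega$-module algebra; this is a direct computation using the cocycle identity. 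Part (3) is the key structural isomorphism: one defines $\phi(a\# h)=\sum\Omega^1\cdot a\#\Omega^2 h$ and checks multiplicativity against the two smash-product multiplications, again using the cocycle condition; invertibility follows from (1) with inverse built from $\Omega^{-1}$. Part (4) is immediate from (3) since an algebra isomorphism induces an equivalence of module categories, and it is graded because $\phi$ preserves degrees (the $\Omega^i$ have degree zero).

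For parts (5)--(8), I would invoke prior results or give short arguments: (5) because $R_\Omega\cong R$ as graded vector spaces and noetherianity can be read off the smash-product isomorphism (3) applied with $H=\kk$, or cited from \cite{CS}; (6) because Hilbert series are preserved (so $\GKdim$ is preserved) and global dimension is a categorical invariant transported by a version of (4) at the level of $R$-modules; (7) is then immediate from (6) together with the fact that the AS-Gorenstein condition is detected by $\Ext$-groups that are preserved under the twist. Part (8) requires tracking the homological determinant through the twist: the $\Ext$-algebra $E$ of $R$ and that of $R_\Omega$ are related, and the distinguished top-degree element $\mathfrak e$ transforms so that the character $\eta$ is conjugated by (the restriction of) the twist; since $\Omega$ is built from $H$ alone and the twist does not change the counit, $\hdet$ trivial for $R$ forces $\hdet$ trivial for $R_\Omega$. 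I expect (8) to require the most care among (1)--(8), but it should follow from \cite{Mo2} or a direct naturality argument.

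The heart of the lemma is (9). The plan is as follows. Suppose $R\#H\to\End_{(R^H)^{\op}}(R)$ is an isomorphism. First, observe that under the isomorphism $\phi$ of part (3) (applied with $H$ replaced by $\kk H$, i.e. only the algebra structure), the subalgebra $R\subseteq R\#H$ corresponds to the subalgebra $R_\Omega\subseteq R_\Omega\#H^\Omega$, and hence the invariant subrings match: $\phi$ restricts to show $R^H$ and $R_\Omega^{H^\Omega}$ are related, and in fact one checks $R_\Omega^{H^\Omega}=R^H$ as subalgebras of the common underlying graded vector space (the invariants are exactly the elements annihilated by the augmentation ideal, which is the same condition before and after twisting since the counit is unchanged). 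More precisely, $R^H\subseteq R$ sits inside $R\#H$ as $R^H\#\kk$, and $\phi^{-1}$ carries this to $R_\Omega^{H^\Omega}\#\kk$; identifying these gives $B:=R^H=R_\Omega^{H^\Omega}$ as graded algebras. Then the two endomorphism rings $\End_{B^{\op}}(R)$ and $\End_{B^{\op}}(R_\Omega)$ must be compared: one shows $R$ and $R_\Omega$ are isomorphic as right $B$-modules (or that $\phi$ induces a compatible identification), so that the natural maps fit into a commuting square
\[
\begin{CD}
R_\Omega\#H^\Omega @>\theta_\Omega>> \End_{B^{\op}}(R_\Omega)\\
@V\phi V\cong V @VV\cong V\\
R\#H @>\theta>> \End_{B^{\op}}(R).
\end{CD}
\]
Since the bottom map is an isomorphism by hypothesis and both vertical maps are isomorphisms, the top map $\theta_\Omega$ is an isomorphism. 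The main obstacle is verifying that $\phi$ genuinely makes this square commute and that the right-hand vertical map is well-defined and bijective --- i.e. that the right $B$-module structures on $R$ and $R_\Omega$ correspond under $\phi$ in a way compatible with the natural ``multiplication'' maps $\theta,\theta_\Omega$. This is essentially bookkeeping with the twisted multiplication and the explicit formula for $\phi$, using that $\Omega$ has degree-zero components and that the locally-finite-graded-domain hypothesis lets us identify $\End_{B^{\op}}(R)$ with a graded endomorphism ring where the relevant maps are graded; the domain hypothesis also ensures $A\#H$ is prime, so Lemma~\ref{yylem4.10} gives injectivity for free and only surjectivity needs the transport.
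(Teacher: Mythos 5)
Your proposal is essentially correct, and for parts (1)--(5) and (7) it matches the paper's proof (which is mostly citations to \cite{Mo2} plus the observation that $F_\phi$ carries $R_\Omega$ to $R$). The interesting divergence is in parts (6), (8) and (9). For (6) and (8) you gesture at ``a version of (4) at the level of $R$-modules'' and at conjugating the $\hdet$ character on the $\Ext$-algebra; the paper's actual mechanism is Lemma~\ref{yylem4.11}: a minimal projective resolution of $\kk$ over $R_\Omega\# H^\Omega$ is simultaneously a minimal free resolution over $R_\Omega$, and applying $F_\phi$ produces the minimal free resolution over $R$, so the two resolutions have the same shape (giving $\gldim$, Hilbert series, $\GKdim$) and the same top term $P_n\cong R_\Omega(\ell)$, whose lowest-degree $H$-action encodes $\hdet$ --- this is the precise bridge your sketch needs, since $R\mhyphen\Mod$ and $R_\Omega\mhyphen\Mod$ are not directly equivalent. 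For (9) your route is genuinely different from, and arguably cleaner than, the paper's: you both establish the key identification $R^H=R_\Omega^{H^\Omega}$ as algebras and the literal equality of the $R^H$-module structures on $R$ and $R_\Omega$ (which holds because the dual cocycle is counital, so $b\cdot_\Omega r=br$ for $b\in R^H$), hence $\End_{(R^H)^{\op}}(R)=\End_{(R_\Omega^{H^\Omega})^{\op}}(R_\Omega)$; but you then close the argument by checking that $\theta\circ\phi=\theta_\Omega$ under this identification (a one-line computation: $\theta(\phi(a\# h))(r)=\sum(\Omega^1\cdot a)(\Omega^2\cdot(h\cdot r))=a\cdot_\Omega(h\cdot r)$), so $\theta_\Omega$ is an isomorphism because $\theta$ and $\phi$ are. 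The paper instead proves injectivity of $\theta_\Omega$ via primeness and Lemma~\ref{yylem4.10} and then gets surjectivity by comparing Hilbert series of source and target, which is where the locally finite graded domain hypothesis is used; your commuting square, once the compatibility is verified, makes that counting step (and the primeness detour) unnecessary.
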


\begin{proof} (1) This is well-known.

\smallskip

(2,3) See \cite[Section~2]{Mo2} and references therein.

\smallskip

(4) 
For every $M\in R\# H \mhyphen\Mod$, define the left 
$R_{\Omega}\# H^{\Omega}$-module structure on $M$, denoted by $M_{\Omega}$, 
by
$$(a\# h) \bullet_{\Omega} m=\sum ((\Omega^1 \cdot a)
\# \Omega^2 h) m$$
for all $a\in R$ and $h\in H$ and $m\in M$.  The result then follows from part (3).

\smallskip

(5) See \cite[Proposition~3.1]{Mo2}.

\smallskip

(6)
Consider a minimal projective resolution of the 
left trivial module $\kk$ over $R_{\Omega}\# H^{\Omega}$:
\begin{equation}
\label{E4.12.1}\tag{E4.12.1}
\cdots \to P_2\to P_1\to  P_0 \to \kk\to 0
\end{equation}
where $P_0=R_{\Omega}$ as a left $R_{\Omega}\# H^{\Omega}$-module. 
By Lemma \ref{yylem4.11}, \eqref{E4.12.1} 
is a minimal projective (free) resolution of the trivial 
$R_{\Omega}$-module $\kk$. Applying the equivalence $F_{\phi}$ of 
categories in part (4), we obtain 
\begin{equation}
\label{E4.12.2}\tag{E4.12.2}
\cdots \to F_{\phi}(P_2)\to F_{\phi}(P_1)\to  
F_{\phi}(P_0) \to \kk\to 0
\end{equation}
which is a minimal projective resolution of the trivial $R\# H$-module 
$\kk$. By Lem-ma~\ref{yylem4.11}, \eqref{E4.12.2} is a minimal projective 
(free) resolution of the trivial $R$-module $\kk$. Combining these
facts, we have that the shape of the minimal free
resolution of the trivial $R$-module $\kk$ is exactly the
shape of the minimal free resolution of the trivial 
$R_{\Omega}$-module $\kk$. From this, we have the following
consequences:
\begin{enumerate}
\item[(i)]
$\gldim R=\gldim R_{\Omega}$.
\item[(ii)]
$R$ is Koszul (respectively, $N$-Koszul) 
if and only if $R_{\Omega}$ is.
\item[(iii)]
$R$ and $R_{\Omega}$ have the same Hilbert series.
\item[(iv)]
$R$ and $R_{\Omega}$ have the same Gelfand-Kirillov dimension. 
\end{enumerate}

\smallskip

(7) Following part (4), $F_\phi$ maps $R_\Omega$ to $R$, and 
by \cite[Lemma 5.2]{KKZ2}
and part~(4),
we have
$$\Ext^i_R(\kk, R)\cong \Ext^i_{R\# H}(\kk\# H, R)
\cong \Ext^i_{R_{\Omega}\# H^{\Omega}}(\kk\# H^{\Omega}, R_{\Omega})
\cong \Ext^i_{R_{\Omega}}(\kk, R_{\Omega}).
$$
With part~(6), we obtain  that $R$ is AS regular if and only if 
$R_{\Omega}$ is.

\smallskip

(8) 
Let $n=\gldim R$. Suppose $R$ (or $R_{\Omega}$) is AS
regular. 
Since $F_\phi$ maps $R_\Omega$ to $R$ 
(and $F_\phi$ commutes with the shift), $P_n \cong R_{\Omega}(\ell)$
as a graded left $R_\Omega \# H^\Omega$-module.
It follows from Definition \ref{yydef1.6} that 
the $H$-action on $R$ has trivial
homological determinant if and only if $F_{\phi}(P_n)\cong
R(\ell)$ as a graded  left $R\# H$-module, or the $H$-action on the
lowest degree of $F_{\phi}(P_n)$ is trivial. This implies
that the $H^{\Omega}$-action on $R_{\Omega}$ has trivial
homological determinant. 

\smallskip

(9) 
Since $R\# H\to \End_{(R^H)^{\op}}(R)$ is an isomorphism, 
$R\# H$ is prime by \cite[Lemma~3.10]{BHZ}. By part (3), 
$R_{\Omega}\# H^{\Omega}$ is prime. Hence, by Lemma 
\ref{yylem4.10}, the map $R_{\Omega}\# H^{\Omega}\to 
\End_{(R_{\Omega}^{H^{\Omega}})^{\op}}(R_{\Omega})$ is injective.
It remains to show that these algebras have the same Hilbert series.
It is clear that $R\# H$ and $R_{\Omega}\# H^{\Omega}$
have the same Hilbert series.  By the hypothesis, it suffices to show that
$\End_{(R^{H})^{\op}}(R)$
and $\End_{(R_{\Omega}^{H^{\Omega}})^{\op}}(R_{\Omega})$
have the same Hilbert series. By definition,
$R=R_{\Omega}$ as vector spaces and the $H$-action on $R$
agrees with the $H^{\Omega}$-action on $R_{\Omega}$. So $R^H=R_{\Omega}^{H^{\Omega}}$
as vector spaces and as algebras. Further,
it is easy to check that the left $R^H$-multiplication on
$R$ is exactly the left $R_{\Omega}^{H^{\Omega}}$-multiplication on
$R_{\Omega}$. As a consequence, $R_{\Omega}$, as a left 
$R_{\Omega}^{H^{\Omega}}$-module, is equal to $R$, as a left
$R^H$-modules. Hence $\End_{(R^{H})^{\op}}(R)=
\End_{(R_{\Omega}^{H^{\Omega}})^{\op}}(R_{\Omega})$, as required. 
Combining these facts, we obtain that
$R_{\Omega}\# H^{\Omega}\cong
\End_{(R_{\Omega}^{H^{\Omega}})^{\op}}(R_{\Omega})$, as desired.
\end{proof}

Now we prove part of Theorem~\ref{yythm0.4} for case ({\tt f}).

\begin{proposition}
\label{yypro4.13}
Let $H=({\mathcal A}(\widetilde{D}_{2m}))^{\circ}$ and $A=\kk_{-1}[u,v]$ be 
as in case {\rm{({\tt f})}} of Theorem {\rm{\ref{yythm0.4}}}. Then Conjecture 
{\rm{\ref{yycon0.2}}} holds.
\end{proposition}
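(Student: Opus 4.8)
The plan is to realize the pair $(A,H)$ as a cotwist of a pair already settled in an earlier case of Theorem~\ref{yythm0.4}, and then to transport Conjecture~\ref{yycon0.2} across the cotwist using Lemma~\ref{yylem4.12}(9). By \cite[Theorem~4.1(2)]{Ma}, $H=(\mathcal{A}(\widetilde{D}_{2m}))^{\circ}$ is a cotwist $(\kk D_{4m})^{\Omega}$ of the group algebra $\kk D_{4m}$ with respect to a dual cocycle $\Omega\in \kk D_{4m}\otimes \kk D_{4m}$; by Lemma~\ref{yylem4.12}(1) we may equivalently write $\kk D_{4m}=H^{\Omega^{-1}}$. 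Twisting the graded $H$-module algebra $A=\kk_{-1}[u,v]$ by $\Omega^{-1}$ yields, via Lemma~\ref{yylem4.12}(2), a graded $\kk D_{4m}$-module algebra $T:=A_{\Omega^{-1}}$ with $T_{\Omega}\cong A$, together with a graded algebra isomorphism $T\#\kk D_{4m}\cong A\#H$ from Lemma~\ref{yylem4.12}(3).

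First I would check that $(T,\kk D_{4m})$ again satisfies Hypothesis~\ref{yyhyp0.1}. By Lemma~\ref{yylem4.12}(5)--(7), $T$ is noetherian and AS regular of dimension~$2$; by part~(6)(iii) it has Hilbert series $(1-t)^{-2}$, hence is generated in degree one; by part~(8) the $\kk D_{4m}$-action has trivial homological determinant. Inner faithfulness transfers as well: $\kk D_{4m}$ and $H=(\kk D_{4m})^{\Omega}$ have the same underlying algebra and the same Hopf ideals, while by Lemma~\ref{yylem4.12}(3) the $\kk D_{4m}$-action on $T$ coincides, as a $\kk$-linear action, with the given $H$-action on $A$; so the two actions are inner faithful simultaneously. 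Finally $T^{\kk D_{4m}}\cong A^{H}$ as graded algebras (as noted in the proof of Lemma~\ref{yylem4.12}(9)), so the standing assumption $A^{H}\neq A$ gives $T^{\kk D_{4m}}\neq T$. Hence $(T,\kk D_{4m})$ is one of the pairs of Theorem~\ref{yythm0.4}, and since its Hopf algebra is the group algebra $\kk D_{4m}$ — which is dihedral, hence neither cyclic nor a subgroup of $\mathrm{SL}_2(\kk)$ — it must be case ({\tt d}) with $n=2m$; in particular $T\cong\kk_{-1}[u,v]$.

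Now I would invoke the cases already done. Conjecture~\ref{yycon0.2} holds for each of ({\tt a}), ({\tt b}), ({\tt c}), ({\tt d}), ({\tt g}), ({\tt h}) by Propositions~\ref{yypro4.2}, \ref{yypro4.3}, \ref{yypro4.5} and \ref{yypro4.7}, so the natural map $T\#\kk D_{4m}\to\End_{(T^{\kk D_{4m}})^{\op}}(T)$ is an isomorphism. Since $T\cong\kk_{-1}[u,v]$ and $T_{\Omega}\cong A=\kk_{-1}[u,v]$ are locally finite graded domains (a skew polynomial ring is a noetherian domain, Example~\ref{yyex1.2}), Lemma~\ref{yylem4.12}(9) applies and shows that the natural map
\[
A\#H \;=\; T_{\Omega}\#(\kk D_{4m})^{\Omega}\;\longrightarrow\;\End_{(A^{H})^{\op}}(A)
\]
is an isomorphism; this is precisely \eqref{E0.2.1}, so Conjecture~\ref{yycon0.2} holds for this pair.

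The step I expect to require the most care is the second one: one must be sure that the cotwist datum supplied by \cite{Ma} is compatible with Hypothesis~\ref{yyhyp0.1} in every respect — above all that inner faithfulness is preserved and that the untwisted pair is genuinely case ({\tt d}) rather than one carrying a nonfaithful or otherwise degenerate action — but each such point is either packaged into Lemma~\ref{yylem4.12} or already implicit in the cotwist correspondence behind the classification of \cite{CKWZ}; granting it, the passage through Lemma~\ref{yylem4.12}(9) is formal.
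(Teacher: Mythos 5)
Your proposal is correct and follows essentially the same route as the paper: invoke \cite[Theorem~4.1]{Ma} to realize $H$ as a cotwist of a dihedral group algebra, use Lemma~\ref{yylem4.12}(2,5--8) to see that the untwisted pair satisfies Hypothesis~\ref{yyhyp0.1} and hence must be case~({\tt d}) of Theorem~\ref{yythm0.4} with $A_\Omega\cong\kk_{-1}[u,v]$, and then transport Proposition~\ref{yypro4.7} back through Lemma~\ref{yylem4.12}(9). Your write-up is in fact somewhat more careful than the paper's (e.g.\ in checking inner faithfulness and pinning down why the untwisted pair cannot land in any case other than ({\tt d})), but the argument is the same.
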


\begin{proof} By Lemma \ref{yylem4.12}, it suffices to show that Conjecture 
\ref{yycon0.2}  holds for some cotwist of $H$. By \cite[Theorem 4.1]{Ma}, $H$ is a cotwist $(\kk D_{2m})^{\Omega^{-1}}$ of
 $\kk{D}_{2m}$. Further, $A_\Omega$ is noetherian AS regular of dimension~2 and admits an $H^\Omega$-action with trivial homological determinant by Lemma~\ref{yylem4.11}(2,5,6,7,8). Since $H^\Omega = \kk D_{2m}$, we have by Theorem~\ref{yythm0.4}({\tt d}) that $A_\Omega \cong A$ as graded $H^\Omega$-module algebras. Hence, Conjecture~\ref{yycon0.2} 
holds by case ({\tt d}).
\end{proof}

We consider the situation where $H={\mathcal D}(\widetilde{\Gamma})^\circ$
for the other algebras in case ({\tt f}) of Theorem \ref{yythm0.4}. There is 
a short exact sequence of Hopf algebras
\begin{equation}
\label{E4.13.1}\tag{E4.13.1}
\kk \to \kk {\mathbb Z}_2 \to H\to \kk \Gamma \to \kk,
\end{equation}
and by \cite[Lemma 6.27(b)]{CKWZ}, $A^{H}=(A^{(2)})^{\Gamma}$, where 
$A=\kk_{-1}[u,v]$. There is a canonical Hopf algebra map $\gamma: H\to \kk\Gamma$ 
given in \eqref{E4.13.1}. Let $t$ be the integral of $H$ such that 
$\epsilon(t)=1$. Then  $t': =\gamma(t)$ is the integral of $\kk \Gamma$ such 
that $\epsilon(t')=1$.

\begin{lemma}
\label{yylem4.14} 
Retain the notation above.
Let $(1\# t)$ and $(1\# t')$ be the 2-sided ideal generated
by $1\# t$ and $1\# t'$, respectively. Then 
$(A\# H)/(1\# t)$ is finite dimensional if and only if 
$(A^{(2)}\# \kk\Gamma)/(1\# t')$ is finite dimensional.
\end{lemma}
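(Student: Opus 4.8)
The plan is to relate the smash product $A\# H$ to $A^{(2)}\# \kk\Gamma$ by passing through the Veronese subring $A^{(2)}$ and using the central extension \eqref{E4.13.1}. The key observation is that the central group algebra $\kk\mathbb{Z}_2 \subseteq H$ acts on $A$ with $A^{(2)}$ precisely as the fixed subring (the $\mathbb{Z}_2$-action is the grading mod $2$), so that restriction along $H \to \kk\Gamma$ ties the two smash products together. First I would record that, since $t$ is the (normalized) integral of $H$ and $\mathbb{Z}_2$ is central with $H/(\ker) \cong \kk\Gamma$, the element $1\# t \in A\# H$ has the form $e_0 \cdot (1\# t_1)$ up to the central idempotents $e_0 = \frac12(1+a)$, $e_1 = \frac12(1-a)$ coming from $\kk\mathbb{Z}_2$, where $t_1$ maps to $t'$ under $\gamma$. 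More precisely, decomposing along the central idempotents of $\kk\mathbb{Z}_2$, one gets $A\# H \cong (A\# H)e_0 \oplus (A\# H)e_1$, and the factor $(A\# H)e_0$ can be identified (as a graded algebra) with something built from $A^{(2)}$ and $\kk\Gamma$, while the two-sided ideal $(1\# t)$ lands inside the $e_0$-part since $a\cdot t = t$ (as $a$ acts trivially under $\epsilon$, being grouplike with $\epsilon(a)=1$, and $t$ is an integral).

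Next I would make this identification precise. Write $A = A^{(2)} \oplus A^{(\mathrm{odd})}$ where $A^{(\mathrm{odd})} = \bigoplus_i A_{2i+1}$; this is the $\mathbb{Z}_2$-grading induced by the central $\kk\mathbb{Z}_2$ in $H$. Then $A\# H$ is a $\mathbb{Z}_2$-graded algebra (via the $a$-eigenvalue) whose degree-zero part contains $A^{(2)}\# \kk\Gamma$ as a sub-quotient; concretely, cutting by $e_0$ gives $(A\# H)e_0$, and one checks $(A^{(2)})\#(He_0) \cong A^{(2)}\# \kk\Gamma$ since $He_0 \cong \kk\Gamma$ as Hopf algebras (this is exactly the content of the splitting of \eqref{E4.13.1} after inverting $2$; $He_0$ is the quotient $H \twoheadrightarrow \kk\Gamma$ realized as a direct summand because $H$ is semisimple and $a$ is central). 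Under this identification $1\# t$ corresponds to $1\# t'$ in $A^{(2)}\# \kk\Gamma$ together with a unit multiple, so $(1\# t)$ in $A\# H$ restricts to $(1\# t')$ in $A^{(2)}\# \kk\Gamma$.

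The finite-dimensionality equivalence then follows from two reductions. On one hand, $A$ is a finitely generated module over $A^{(2)}$ (it is generated by $1$ and $A_1$, since $A$ is generated in degree one), and $H$ is finite dimensional, so $A\# H$ is a finitely generated module over $A^{(2)}\# \kk\mathbb{Z}_2$ and hence over $A^{(2)}$; therefore $(A\# H)/(1\# t)$ is finite dimensional if and only if the image of $A^{(2)}\# H$ (equivalently $(A^{(2)}\#H)e_0 \cong A^{(2)}\#\kk\Gamma$) in it is finite dimensional, because the odd-degree generators only multiply dimensions by a bounded factor. On the other hand, the ideal $(1\# t)$ in $A\# H$ intersected with (or projected to) the summand $(A\# H)e_0 \cong A^{(2)}\# \kk\Gamma$ is precisely $(1\# t')$: the inclusion $(1\# t') \subseteq (1\# t)|_{e_0}$ is immediate, and the reverse inclusion holds because any product $x(1\# t)y$ with $x,y \in A\# H$, after multiplying by $e_0$, can be rewritten using that $He_0$ is spanned by $\kk\Gamma$ and that $A e_0$-components reduce to $A^{(2)}$-components modulo the odd part, which is absorbed. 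Assembling these, $(A\# H)/(1\# t)$ is finite dimensional $\iff$ $(A^{(2)}\#\kk\Gamma)/(1\# t')$ is, which is the claim.

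The main obstacle will be pinning down the identification $(A\# H)e_0 \cong A^{(2)}\#\kk\Gamma$ as algebras with enough care to track the ideal $(1\# t)$ correctly — in particular, verifying that $He_0$, with its induced multiplication and the residual coaction data needed to recover the smash product structure, is genuinely $\kk\Gamma$ (using semisimplicity and centrality of $a$, i.e.\ that \eqref{E4.13.1} is a central extension so $e_0$ is a central idempotent whose corner of $H$ is a Hopf algebra quotient), and that the $H$-action on $A$ restricted to the $e_0$-corner is exactly the $\Gamma$-action on $A^{(2)}$ from \cite[Lemma 6.27(b)]{CKWZ}. Once that structural dictionary is in place, the finite-dimensionality statement is a routine consequence of the module-finiteness of $A$ over $A^{(2)}$ and the two displayed ideal containments.
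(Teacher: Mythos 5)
Your plan---cut $A\# H$ by the idempotent $e_0=\tfrac12(1+a)$ attached to the central copy of $\kk\mathbb{Z}_2$ and identify the resulting piece with $A^{(2)}\#\kk\Gamma$---is at bottom the same device the paper uses (its $f_1=\tfrac12(1+g)$ is your $e_0$), but several of your structural claims are off, and one of them conceals the real content of the lemma. First, $e_0$ is central in $H$ but \emph{not} in $A\# H$, so $(A\# H)e_0$ is only a right ideal; the algebra you want is the Peirce corner $e_0(A\# H)e_0=A^{(2)}\# He_0\cong A^{(2)}\#\kk\Gamma$. Relatedly, the two-sided ideal $(1\# t)$ does not ``land inside the $e_0$-part'': only its generator satisfies $1\# t=e_0(1\# t)e_0$, while the ideal itself spreads over all four Peirce components (e.g.\ $u\# t=(u\# t)e_0$ but $e_0(u\# t)=0$). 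With the corner set up correctly, one direction does go through cleanly: since $1\# t=e_0(1\# t)e_0$, the corner ideal it generates equals $e_0\,(1\# t)\,e_0=(1\# t)\cap e_0(A\# H)e_0$, so the corner quotient embeds in $(A\# H)/(1\# t)$ and finite-dimensionality passes down. (The paper argues this direction by elements, multiplying a presentation of $u^{2d}\#1$ by $f_1$ and using $gt=t$ to kill the odd terms before applying $\gamma$.)

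The genuine gap is in the converse. You claim $(A\# H)/(1\# t)$ is finite dimensional iff the image of $A^{(2)}\# H$ --- ``equivalently'' of $(A^{(2)}\# H)e_0$ --- is, and you attribute the passage back to all of $A\# H$ to module-finiteness of $A$ over $A^{(2)}$. The parenthetical ``equivalently'' is exactly what fails: $A^{(2)}\# H=A^{(2)}\# He_0\oplus A^{(2)}\# He_1$, and the corner only controls the first summand. Since $e_0$ is not a full idempotent in $A\# H$ (modulo $A_{\ge1}\# H$ the ideal it generates is just $He_0\subsetneq H$), cofiniteness of the corner ideal does not formally imply cofiniteness of $(1\# t)$; and $A\# H$ is not generated as a one-sided module over the corner, because $e_0(A\# H)e_0\cdot(A\# H)\subseteq e_0(A\# H)$ meets $A^{(2)}\# He_1$ only in $0$. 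The missing ingredient --- which is the heart of the paper's ``if'' direction --- is the two-sided computation $(u\#1)(u^{2d}\# e_0)+(u^{2d}\# e_0)(u\#1)=u^{2d+1}\#1$, valid because $\Delta(e_0)=e_0\otimes e_0+e_1\otimes e_1$, $e_0(u)=0$ and $e_1(u)=u$; it converts $u^{2d}\# e_0\in(1\# t)$ into $u^{2d+1}\#1\in(1\# t)$ (and likewise for $v$), after which $A_{\ge 4d+2}\# H\subseteq(1\# t)$ and finite-dimensionality follows. Without this step, or an equivalent device crossing the Peirce components, your converse is not established. (The paper also has to work to produce the element $u^{2d}\# f_1$ inside $(1\# t)$ in the first place, via the basis $\{f_i,h_i\}$ and linear independence of $\{\gamma(f_i)\}$; your second reduction handles this part adequately once the corner is used in place of $(A\# H)e_0$.)
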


\begin{proof} By \eqref{E4.13.1}, $H/(g-1)\cong \kk \Gamma$,
where $g\in H$ is a central grouplike element of order $2$. Let 
$\{f_1:=\frac{1}{2}(1+g), f_2,\dots, f_m, h_1,\dots, h_m\}$ be 
a $\kk$-basis of $H$ such that: 
\begin{enumerate}
\item[(i)]
$h_i\in (g-1)H$ for all $i$, 
\item[(ii)]
$g f_i=f_i$ for all $i$, and 
\item[(iii)]
$\{\gamma(f_i)\}_{i=1}^m$ is the set of grouplike elements in $\Gamma$. 
\end{enumerate}
Suppose $(A\# H)/(1\# t)$ is finite dimensional. Then 
$u^{2d}, v^{2d}$ are in the ideal $I$ of $A\# H$ generated by 
$1\# t$. Write
$$u^{2d}\# 1=\sum x_i (1\# t) y_i=
\sum_{\deg x_i=odd}x_i (1\# t) y_i+\sum_{\deg x_i=even}
x_i (1\# t) y_i$$ 
where $x_i$ and $y_i$ are homogeneous in $A\# H$ and $\deg (x_i)+
\deg (y_i)=2d$ for all $i$. Multiplying the above equation by $f_1=\frac{1}{2}(1+g)$ 
 we obtain
$$\begin{aligned}
u^{2d}\# f_1 &= (1\# f_1)(u^{2d}\# 1)=
(1\# \textstyle\frac{1}{2}(1+g)) (u^{2d}\# 1)\\
&=\sum_{\deg x_i=odd}(1\# \textstyle\frac{1}{2}(1+g)) x_i (1\# t) y_i+\displaystyle \sum_{\deg x_i=even}
(1\# f_1)x_i (1\# t) y_i\\
&=\sum_{\deg x_i=odd}x_i (1\# \textstyle\frac{1}{2}(1-g)) (1\# t) y_i+\displaystyle\sum_{\deg x_i=even}
x_i (1\# f_1)(1\# t) y_i\\
&=\sum_{\deg x_i=even} x_i (1\# t) y_i.
\end{aligned}
$$
Let $\gamma$ denote also the induced surjective algebra
homomorphism $A^{(2)}\# H\to A^{(2)}\# \Bbbk \Gamma$.
Applying $\gamma$ to the above, we obtain 
$$u^{2d}\# 1=\sum_{\deg x_i=even} \gamma(x_i) (1\# t') \gamma(y_i),$$
which holds in $A^{(2)}\# \kk \Gamma$. Thus $u^{2d}\# 1$ is in the ideal $I'$ of $A^{(2)}\# \kk\Gamma$ 
generated by $1\# t'$. Similarly, $v^{2d}\# 1$ is in the ideal $I'$ of 
$A^{(2)}\# \kk \Gamma$ generated by $1\# t'$. This implies that $(A^{(2)}\# \kk \Gamma)/I'$ is finite 
dimensional.

\smallbreak Conversely, suppose that $(A^{(2)}\# \kk \Gamma)/I'$ is finite  dimensional. Then
there is an integer $d\geq 1$ such that  
$u^{2d}\# 1$ and $ v^{2d}\# 1$ are in the ideal $I'$ generated by $1\# t'$.
Write
$$u^{2d}\# 1=\sum_{i} \gamma(x_i) (1\# t') \gamma(y_i)$$
where $x_i,y_i$ are elements in $A^{(2)}\# H$. Let $F$ be the
element $(\sum_{i} x_i (1\# t) y_i)(1\# f_1)$ in $A\# H$. Then 
$$\gamma(F)=\sum_{i} \gamma(x_i) (1\# t') \gamma(y_i)=u^{2d}\# 1.$$
Write $F=\sum z_i\# f_i+\sum t_i\# g_i$. Since $g_i\in (g-1)$ and 
$F=F(1\# g)$, we have $t_i=0$. Hence 
$u^{2d}\# 1=\gamma(F)=\sum z_i \# \gamma(f_i)$.
Since $\{\gamma(f_i)\}_{i=1}^m$ are linearly independent, $z_1=u^{2d}$ 
and $z_i=0$ for all $i\neq 1$. Thus $F=u^{2d}\# f_1$. 
Therefore, we have that $(u\# 1) F+ F (u\# 1)=u^{2d+1}\# 1$, 
which must be in $I$. Similarly, $v^{2d+1}\# 1\in I$. This implies that 
$(A\# H)/I$ is finite dimensional.
\end{proof}

If $(A,H)$ is in the case of Theorem {\rm{\ref{yythm0.4}}}{\rm{({\tt f})}} then $A=\kk_{-1}[u,v]$ and $H=({\mathcal A}(\widetilde{D}_{2m}))^{\circ}$ or equation \ref{E4.13.1} holds. 
\begin{proposition}
\label{yypro4.15}
Let $(A,H)$ be in the case  of Theorem {\rm{\ref{yythm0.4}}}{\rm{({\tt f})}}. 
Then Conjecture {\rm{\ref{yycon0.2}}} holds.
\end{proposition}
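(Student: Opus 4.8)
The plan is to split case~({\tt f}) into the subcase $H=(\mathcal A(\widetilde D_{2m}))^{\circ}$, already settled in Proposition~\ref{yypro4.13}, and the subcase in which the Hopf exact sequence \eqref{E4.13.1} holds, and to treat the latter by reducing, via Lemma~\ref{yylem4.14} and the Veronese identifications of Section~\ref{yysec3}, to the classical Auslander theorem. So assume \eqref{E4.13.1} holds, with $\Gamma$ the quotient of the binary (dihedral or polyhedral) group $\widetilde\Gamma$ by its central $\mathbb Z_2$, and recall $A=\kk_{-1}[u,v]$. By Lemma~\ref{yylem4.4} it suffices to show that $(A\#H)/(1\#t)$ is finite dimensional, and by Lemma~\ref{yylem4.14} this is equivalent to the finite dimensionality of $(A^{(2)}\#\kk\Gamma)/(1\#t')$, where $t'=\gamma(t)$.

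Next I would bring in Section~\ref{yysec3}: by Lemmas~\ref{yylem3.1}(4), \ref{yylem3.2}(4), \ref{yylem3.3}(4) in the type $\mathbb E$ cases, and by Lemma~\ref{yylem3.6} in the remaining type $\mathbb D$ case $H=\mathcal B(\widetilde D_{2n})^{\circ}$, the Veronese subring $A^{(2)}$ is isomorphic to $\kk[u,v]^{(2)}$ as a graded $\Gamma$-module algebra. Since any such isomorphism intertwines the $\Gamma$-actions, it induces a graded algebra isomorphism $A^{(2)}\#\kk\Gamma\cong \kk[u,v]^{(2)}\#\kk\Gamma$ that is the identity on $\kk\Gamma$ and hence carries $1\#t'$ to $1\#t'$; thus $(A^{(2)}\#\kk\Gamma)/(1\#t')$ is finite dimensional if and only if $(\kk[u,v]^{(2)}\#\kk\Gamma)/(1\#t')$ is.

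Finally I would deduce the finite dimensionality of $(\kk[u,v]^{(2)}\#\kk\Gamma)/(1\#t')$ from the classical case. The binary group $\widetilde\Gamma\subseteq\mathrm{SL}_2(\kk)$ contains no pseudo-reflection (in dimension two a pseudo-reflection has determinant $\neq 1$), so by Proposition~\ref{yypro4.2} and Lemma~\ref{yylem4.4} the quotient $(\kk[u,v]\#\kk\widetilde\Gamma)/(1\#\widetilde t)$ is finite dimensional, where $\widetilde t$ is the normalized integral of $\kk\widetilde\Gamma$. The argument of Lemma~\ref{yylem4.14} applies verbatim to the pair $(\kk[u,v],\kk\widetilde\Gamma)$: it uses only the Hopf surjection $\kk\widetilde\Gamma\twoheadrightarrow\kk\widetilde\Gamma/(g-1)\cong\kk\Gamma$, with $g$ the central element of order two, and the identity $\kk[u,v]^{\widetilde\Gamma}=(\kk[u,v]^{(2)})^{\Gamma}$ from \cite[Lemma~6.27(b)]{CKWZ}; it yields that $(\kk[u,v]^{(2)}\#\kk\Gamma)/(1\#t')$, with $t'=\gamma(\widetilde t)$, is finite dimensional. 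Chaining the equivalences upward through the previous paragraph, then Lemma~\ref{yylem4.14}, then Lemma~\ref{yylem4.4}, gives Conjecture~\ref{yycon0.2} for $(A,H)$. The step I expect to require the most care is not a single computation but the bookkeeping ensuring that the two uses of Lemma~\ref{yylem4.14}---for $(A,H)$ and for $(\kk[u,v],\kk\widetilde\Gamma)$---are genuinely compatible: that the integrals $t$, $\widetilde t$, $t'$ match under the relevant Hopf surjections, and that the Section~\ref{yysec3} isomorphism $A^{(2)}\cong\kk[u,v]^{(2)}$ really does respect the degree-two generating space used to form the smash products.
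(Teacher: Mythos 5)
Your proposal is correct and follows the paper's own proof essentially line for line: the same split into the $(\mathcal A(\widetilde D_{2m}))^{\circ}$ subcase (Proposition~\ref{yypro4.13}) versus the subcase governed by \eqref{E4.13.1}, the same reduction via Lemmas~\ref{yylem4.4} and~\ref{yylem4.14} to finite dimensionality of $(A^{(2)}\#\kk\Gamma)/(1\#t')$, the same appeal to Lemmas~\ref{yylem3.1}(4), \ref{yylem3.2}(4), \ref{yylem3.3}(4), and~\ref{yylem3.6} to replace $A^{(2)}$ by $\kk[u,v]^{(2)}$, and the same final deduction from Proposition~\ref{yypro4.2}, Lemma~\ref{yylem4.4}, and the commutative version of Lemma~\ref{yylem4.14}. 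Your added bookkeeping about matching integrals under the Hopf surjections is a correct elaboration of details the paper leaves implicit.
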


\begin{proof} When $H=({\mathcal A}(\widetilde{D}_{2m}))^{\circ}$, 
the assertion is Proposition \ref{yypro4.13}. For the other cases,
by Lemmas \ref{yylem4.4} and \ref{yylem4.14}, it suffices to
show that $(A^{(2)}\# \kk \Gamma)/(1\# t')$ is finite dimensional.
By Lemmas \ref{yylem3.1}(4), \ref{yylem3.2}(4), \ref{yylem3.3}(4)
and \ref{yylem3.6}, it is equivalent to show that
$(\kk[u,v]^{(2)} \# \kk \Gamma)/(1\# t')$ is finite
dimensional. This last statement follows by Proposition~\ref{yypro4.2}, Lemma~\ref{yylem4.4},
and the commutative version of Lemma \ref{yylem4.14}.
\end{proof}

\begin{proof}[Proof of Theorem \ref{yythm4.1}]
It follows by combining Propositions \ref{yypro4.2},
\ref{yypro4.3}, \ref{yypro4.5},  \ref{yypro4.7}, \ref{yypro4.9}, 
and \ref{yypro4.15}.
\end{proof}

%% Section 5
%%%%%%%%%%%%%%%%%%%%%%%%%%%%%%%%%%%%%%%%%%%%%%%%%%%%%%%%%%%%%%%%%%%%%%%%%%%%%%%%%%%%%%%%%%%%%%%%%%%%%%%%%%%%%%%
%%%%%%%%%%%%%%%%%%%%%%%%%%%%%%%%%%%%%%%%%%%%%%%%%%%%%%%%%%%%%%%%%%%%%%%%%%%%%%%%%%%%%%%%%%%%%%%%%%%%%%%%%%%%%%%
%%%%%%%%%%%%%%%%%%%%%%%%%%%%%%%%%%%%%%%%%%%%%%%%%%%%%%%%%%%%%%%%%%%%%%%%%%%%%%%%%%%%%%%%%%%%%%%%%%%%%%%%%%%%%%%
%%%%%%%%%%%%%%%%%%%%%%%%%%%%%%%%%%%%%%%%%%%%%%%%%%%%%%%%%%%%%%%%%%%%%%%%%%%%%%%%%%%%%%%%%%%%%%%%%%%%%%%%%%%%%%%
%%%%%%%%%%%%%%%%%%%%%%%%%%%%%%%%%%%%%%%%%%%%%%%%%%%%%%%%%%%%%%%%%%%%%%%%%%%%%%%%%%%%%%%%%%%%%%%%%%%%%%%%%%%%%%%
%%%%%%%%%%%%%%%%%%%%%%%%%%%%%%%%%%%%%%%%%%%%%%%%%%%%%%%%%%%%%%%%%%%%%%%%%%%%%%%%%%%%%%%%%%%%%%%%%%%%%%%%%%%%%%%
%{[Ellen, could you double check the computations in the following section?]}

\section{Quantum Kleinian singularities as noncommutative hypersurfaces} 
\label{yysec5}

Recall that if a semisimple Hopf algebra $H$ acts on an Artin-Schelter (AS) regular algebra $A$ inner faithfully, preserving the grading of $A$, with trivial homological determinant, and if further $A^H \neq A$, then $A^H$ is not AS regular \cite[Theorem~0.6]{CKWZ}. 
This motivates the following definition. 

\begin{definition} 
\label{yydef5.1}
We say that a ring $R$  has {\em{quantum Kleinian singularities}} if it is isomorphic to a fixed ring $A^{H}$ where:
\begin{enumerate}
\item[(i)] 
$H$ is a finite dimensional Hopf algebra, and
\item[(ii)] 
$A$ is an AS regular algebra of dimension~2 which is also a graded
inner-faithful $H$-module algebra, and 
\item[(iii)] the $H$-action on $A$ has 
trivial homological determinant.
\end{enumerate} 
\end{definition}

The classification of the quantum Kleinian  singularities follows
from Theorem~\ref{yythm0.4}. Here we describe these
fixed subrings as noncommutative hypersurface rings in the 
sense of \cite{KKZ4}. 

\begin{theorem}
\label{yythm5.2}
Let $H$ be a semisimple Hopf algebra and $A$ be an $H$-module algebra
satisfying Hypothesis~{\rm{\ref{yyhyp0.1}}} with $\gldim A =2$. Then $A^H$ 
is isomorphic to $C/C\Omega$, where $C$ is an AS-regular
algebra of dimension $3$ and $\Omega$ is a normal element in $C$.
\end{theorem}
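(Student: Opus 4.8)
The plan is to prove Theorem~\ref{yythm5.2} by a case-by-case reduction, using Theorem~\ref{yythm0.4} to enumerate the pairs $(A,H)$ and then exhibiting, in each case, an AS regular algebra $C$ of global dimension $3$ together with a normal element $\Omega$ with $A^H \cong C/C\Omega$. First I would observe that by Theorem~\ref{yythm0.4} together with the fixed-ring identifications in Section~\ref{yysec3} (Lemmas~\ref{yylem3.1}(4), \ref{yylem3.2}(4), \ref{yylem3.3}(4), \ref{yylem3.6}) and the reduction $A^H = (A^{(2)})^\Gamma$ in cases of type $\mathbb{D},\mathbb{E}$, the list of rings arising as $A^H$ collapses to a short explicit list: the classical Kleinian singularities $\kk[u,v]^{\widetilde\Gamma}$ (types $\mathbb{A}$, $\mathbb{D}$, $\mathbb{E}$), together with the genuinely noncommutative fixed rings coming from cases (\texttt{b})--(\texttt{c}), (\texttt{g}), (\texttt{h}) --- e.g.\ $\kk_q[u,v]^{C_n}$, $\kk_{-1}[u,v]^{C_2}$, $\kk_J[u,v]^{C_2}$. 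Since a hypersurface presentation is available classically for the $\widetilde{\Gamma}$-invariants (each Kleinian singularity is $\kk[x,y,z]/(f)$ for the ADE equation $f$), and $\kk[x,y,z]$ is AS regular of global dimension $3$ with $(f)$ generated by a normal — indeed central — element, those cases are immediate.

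The substance is therefore the remaining noncommutative cases, and I would handle them by direct computation of generators and relations of $A^H$. For $A=\kk_q[u,v]$ with the diagonal $C_n$-action (cases (\texttt{b}),(\texttt{g})), $A^H$ is generated by $x=u^n$, $y=v^n$, $z=uv$, and one checks that these satisfy a single relation of the form $xy = \mu\, z^n$ for a suitable scalar $\mu$ depending on $q$; the candidate $C$ is then the skew (or appropriately quantized) polynomial algebra on $x,y,z$ in which $z$ is normal, and $\Omega = xy - \mu z^n$ is normal in $C$. One verifies that this $C$ is AS regular of dimension $3$ (it is an iterated Ore extension, hence Koszul AS regular of the right dimension) and that the quotient by $\Omega$ has the same Hilbert series as $A^H$, forcing the natural surjection $C/C\Omega \to A^H$ to be an isomorphism. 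The remaining small cases (the nondiagonal $C_2$-action on $\kk_{-1}[u,v]$, and the Jordan plane under $C_2$) are treated the same way: compute the three degree-appropriate invariants, find the one defining relation, identify $C$ as a three-dimensional AS regular Ore extension, and check $\Omega$ is normal and the Hilbert series match.

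The main obstacle I anticipate is the verification that the ambient algebra $C$ really is AS regular of global dimension exactly $3$ in the slightly more exotic cases, and — more delicately — that $\Omega$ is a genuinely \emph{normal} (not merely regular) element of $C$, since normality is what is needed both for $C/C\Omega$ to inherit good homological behavior and for the Hilbert series identity $H_{C/C\Omega}(t) = (1 - t^{\deg\Omega})\,H_C(t)$ to hold. In the quantized cases one must track the automorphism $\phi$ with $\Omega c = \phi(c)\Omega$ explicitly and confirm it is an algebra automorphism of $C$; a small amount of care is also needed to normalize scalars (the parameter $q$, roots of unity, the Jordan parameter) so that the relation takes the clean form claimed. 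Once normality of $\Omega$ and AS regularity of $C$ are in hand, the isomorphism $A^H \cong C/C\Omega$ follows from a Hilbert-series comparison together with the surjectivity of the evident map sending the generators of $C$ to the chosen invariants, and assembling the cases completes the proof.
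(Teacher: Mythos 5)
Your overall strategy---enumerate the pairs $(A,H)$ via Theorem~\ref{yythm0.4}, and in each case exhibit explicit invariants generating $A^H$, a single relation $\Omega$, an ambient AS regular $C$ of dimension $3$ in which $\Omega$ is normal, and close the argument by a Hilbert series comparison---is exactly the paper's strategy, and your treatment of cases ({\tt a}), ({\tt f}), ({\tt b}), ({\tt c}), ({\tt g}), ({\tt h}) is sound (the paper outsources the group-action cases ({\tt b}), ({\tt c}), ({\tt g}), ({\tt h}) to earlier work of Kirkman--Kuzmanovich--Zhang rather than recomputing, but records the same presentations). However, your case enumeration has a genuine gap: it omits cases ({\tt d}) and ({\tt e}) entirely. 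Case ({\tt e}), where $H=(\kk D_{2n})^{\circ}$ acts on $\kk_{-1}[u,v]$, has McKay quiver $\widetilde{\mathbb{A}}_{2n-1}$, so it is not captured by your ``types $\mathbb{D}$, $\mathbb{E}$ collapse to classical Kleinian singularities'' reduction, nor does it appear in your list of remaining noncommutative cases; this is in fact the case to which the paper devotes the bulk of its proof, computing the $\kk D_{2n}$-coinvariants $a_1=(v_1v_2)^n$, $a_2=v_1^2$, $a_3=(v_2v_1)^n$ by hand, verifying they commute, and matching Hilbert series against $\kk[a_1,a_2,a_3]/(a_2^{2n}-(-1)^na_1a_3)$.

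Case ({\tt d}) is more dangerous for your argument, because the implicit premise that every type-$\mathbb{D}$ McKay quiver yields a commutative Kleinian singularity is false there. The type-$\mathbb{D}$ reduction of Section~\ref{yysec3} (Lemma~\ref{yylem3.6}) concerns only the Hopf algebra ${\mathcal B}(\widetilde{D}_{2n})$ from case ({\tt f}); it says nothing about the group algebra $\kk D_{2n}$ itself acting on $\kk_{-1}[u,v]$. The fixed ring in case ({\tt d}) is a genuinely noncommutative hypersurface: for $n$ odd the ambient algebra $C$ on the generators $u^n+v^n$, $(uv)^2$, $u^{n+1}v+uv^{n+1}$ is noncommutative, and the generators sit in degrees $n$, $4$, $n+2$, so the ``three invariants, one relation $xy=\mu z^n$'' template from the diagonal cases does not transfer. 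You would need either a separate direct computation here or a citation to the existing classification of such fixed rings. A smaller point: your blanket claim that each $C$ is an iterated Ore extension fails in case ({\tt c}), where $C=\kk\langle a_1,a_2\rangle/([a_1^2,a_2],[a_2^2,a_1])$ is generated by two elements of degrees $1$ and $3$; AS regularity there needs a different verification. With cases ({\tt d}) and ({\tt e}) supplied, the rest of your outline assembles into the paper's proof.
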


\begin{proof}
The proof of this theorem consists of a case-by-case analysis, the results of which are summarized in Table~3. We defer the verification that $\Omega$ is normal in $C$ to the end of this proof.

In case ({\tt a}), the fixed subring $\kk [ u,v ]^{G}$, where 
$G \leq SL_{2}\left( \kk \right)$, is a (commutative) 
Kleinian singularity, and is well known to be a hypersurface 
singularity in affine 3-space. The cases of group actions 
(cases ({\tt b}), ({\tt c}), ({\tt d}), ({\tt g}), ({\tt h})) on noncommutative AS regular 
algebras holds by \cite[Theorem~0.1]{KKZ2} and \cite[Theorem 0.1]{KKZ5}. 
It remains to verify the result in cases ({\tt e}) and~({\tt f}). (Again, we show that $\Omega$ is normal in $C$ at the end.)

Case ({\tt e}): We show that $A^H = A^{\mathrm{co} \; \kk D_{2n}}$ as follows.   Let $v_{1} =u+ \sqrt{-1} v$ and $v_{2} = \sqrt{-1} u+v$. 
Then 
$$A:=\kk_{-1}[u,v]\cong\kk\langle v_{1},v_{2}\rangle/(v_{1}^{2}+v_{2}^{2}).$$ 
We use the following presentation for 
$$D_{2n} = \langle g , h \mid g^{2} =
h^{2} =( g h )^{n} =1 \rangle.$$  
The $\kk D_{2n}$-coaction on $A$ is given by 
$\rho(v_{1}) =v_{1}\otimes g$ and $\rho(v_{2}) =v_{2}\otimes h$.

\smallbreak Let $s_{r,k} := v^{r-2k}_{1} ( v_{1} v_{2} )^{k}$ and $s_{r,k}' :=
v^{r-2k}_{1} ( v_{2} v_{1} )^{k}$. We choose the following basis 
$\{ s_{2m,k} ,s'_{2m,k} \mid k=1, \ldots ,m \} 
\cup \{ v_{1}^{2m} \}$ for $A_{2m}$ and
the following basis 
$\{ s_{2m+1,k} ,s'_{2m+1,k} \mid k=1, \ldots ,m \} 
\cup \{v_{1} ( v_{1} v_{2} )^{2m} ,v_{2} ( v_{1} v_{2} )^{2m} \}$ 
for $A_{2m+1}$. It is immediate that $A^H = A^{\mathrm{co} \; \kk D_{2n}}$. 

\smallbreak Next we compute the $\kk D_{2n}$-coinvariants in 
degree $2m$. Clearly $v_{1}^{2m}$ is $\kk D_{2n}$-coinvariant, and 
$\rho ( s_{2m,k} ) =s_{2m,k} \otimes ( g h)^{k}$ implies 
that $s_{2m,k}$ is $\kk D_{2n}$-coinvariant if and only if $n \mid k$. 
Similarly $s'_{2m,k}$ is coinvariant if and only if $n \mid k$. 
We obtain a $\Bbbk$-basis of $\kk D_{2n}$-coinvariant elements
$\{s_{2m, p n}\}_{p\geq 0} \cup \{s'_{2m, pn}\}_{p\geq 0}.$
This argument shows that $$a_{1}=(v_{1} v_{2})^{n}, \quad
a_{2} =v_{1}^{2} =-v_{2}^{2}, \quad a_{3} = ( v_{2} v_{1} )^{n}$$ 
are $\kk D_{2n}$-coinvariant. These elements commute and satisfy $\Omega:=a_{2}^{2n} - ( -1 )^{n} a_{1} a_{3}$ equal to 0. So letting $C = \kk[a_1,a_2,a_3]$,
 we have a surjective ring homomorphism
\begin{eqnarray*}
\theta: C / ( \Omega ) &
\twoheadrightarrow & A^{\mathrm{co} \; \kk D_{2n}}.
\end{eqnarray*}

The $\Bbbk$-basis for $\Bbbk D_{2n}$-coinvariants given above shows that
\[
\begin{array}{rl}
H_{A^{\mathrm{co} \; \kk D_{2n}}} ( t ) & =  \displaystyle \sum_{j=0}^{\infty} \left( 1+2
\lfloor \textstyle \frac{j}{n} \rfloor \right) t^{2j} \quad =  (1+t^{2} +\cdots +t^{2n-2})\displaystyle\sum_{j=0}^{\infty} ( 1+2j ) t^{2j n}\\
\medskip

& =  \displaystyle\frac{(1+t^{2n})(1+t^{2} + \cdots +t^{2n-2})}{( 1-t^{2n} )^{2}}
\quad = \displaystyle \frac{1+t^{2} + \cdots +t^{4n-2}}{( 1-t^{2n} )^{2}}.
\end{array}
\]

The Hilbert series of $C/ ( \Omega )$ follows from noting that the
generators $a_1$, $a_2$, $a_3$ are in degrees $2n$, $2$, $2n$, respectively, with a relation $\Omega$ in degree $4n$.
Hence
\begin{eqnarray*}
H_{C/ ( \Omega )} ( t ) & = & \frac{1-t^{4n}}{( 1-t^{2n} )^{2} ( 1-t^{2} )}
.
\end{eqnarray*}
Since $H_{A^H}(t) = H_{A^{\mathrm{co} \;D_{2n}}}(t) =H_{C/ ( \Omega )}(t)$, we obtain that
$\theta$ is an isomorphism, as desired.

\smallskip

Case ({\tt f}): This case follows from Lemmas \ref{yylem3.1}(4), \ref{yylem3.2}(4), 
\ref{yylem3.3}(4), and \ref{yylem3.6}(4), and completes the proof. 

\smallskip

We summarize the cases in the following table.

{\small
\[ \begin{array}{|l|l|l|} \hline  &
 R = C/(\Omega) & (a_1,a_2,a_3) \\ 
 \hline 
 \hline 
{\rm{({\tt a})}} & \text{Commutative Kleinian singularity} & \\
\hline
{\rm{({\tt b})}} & C=\kk_{\mathbf{q}}[a_1,a_2,a_3] &  (u^n, uv, v^n) \\
   & \quad \text{where }q_{12}=q_{13}=(-1)^{n^2},q_{23}=(-1)^n & \\
    & \Omega =  a_2^n-(-1)^{\frac{n(n-1)}{2}}a_1 a_3  &                 \\ 
\hline
{\rm{({\tt c})}} & C = \kk\langle a_1,a_2\rangle/ ([a_1^2,a_2],[a_2^2,a_1]) 
                                           & (u+v, (u-v)(u+v)(u-v), 0) \\
    &\Omega = a_1^6-a_2^2   &  {\rm{(}}{\text{two generators}}{\rm{)}}\\
\hline
{\rm{({\tt d})}} & n\text{ even} & \\
    & C = \kk[a_1,a_2,a_3]                       & (u^n + v^n, (uv)^2, u^{n+1}v - uv^{n+1}) \\
    & \Omega =  
    a_3a_1+a_1a_3-4(-1)^{(n+2)/2}a_2^{(n+2)/2}
    %a_3^2 - a_2 a_1^2 -(-1)^{\lfloor \frac{n-1}{4}\rfloor}  4a_2^{(n+2)/2} 
    & 
    \\
    \cline{2-3}
    & n\text{ odd}                                & \\
    & C=  \displaystyle\frac{\kk\langle a_1, a_2,a_3\rangle}
    {\left(\begin{array}{c}
    a_3a_1+a_1a_3-4(-1)^{(n+1)/2}a_2^{(n+1)/2} 
    %a_3 a_1 + a_1 a_3 - 4(-1)^{\lfloor n/4\rfloor} a_2^{(n+1)/2}
    \\
    a_2 \text{ central }
    \end{array}\right)}
		& (u^n + v^n, (uv)^2,u^{n+1}v + uv^{n+1})\\
    &\Omega= a_3^2 + a_2a_1^2 & \\
\hline
{\rm{({\tt e})}} & C= \kk[a_1,a_2,a_3] & ((v_1v_2)^n, v_1^2,(v_2v_1)^n) \\ 
    & \Omega = a_2^{2n}-(-1)^n a_1a_3              & \\
\hline 
{\rm{({\tt f})}} & \text{Commutative Kleinian singularity} & \\
\hline
{\rm{({\tt g})}} & C=\kk_{\mathbf{q}}[a_1,a_2,a_3]               & (u^n,uv, v^n)\\
    &\quad \text{where }q_{12}=q_{13}=q^{n^2},q_{23}=q^{n} & \\
    & \Omega =  a_2^n-q^{\frac{n(n-1)}{2}}a_1 a_3    & \\ 
\hline 
{\rm{({\tt h})}} & C= \displaystyle\frac{ \kk\langle a_1,a_2,a_3 \rangle}{
\left(\begin{array}{l}a_2a_1 -
a_1a_2 - 2a_1^2\\ a_3a_2 - a_2a_3 - 2a_2^2\\
a_3a_1 -a_1a_3 - 4a_1a_2 - 6a_1^2\end{array}\right)} & (u^2,uv,v^2) \\
    & \Omega = a_2^2-a_1a_2-a_1a_3  & \\ 
\hline 
\end{array} \]}
\medskip
\begin{center}
\small{\textsc{Table~3.} {\rm Quantum Kleinian singularities, as
noncommutative hypersurface singularities }}
\end{center}

Finally, in the cases when $C$ is noncommutative,  the normality of $\Omega$ in $C$ holds due to the computations below. The reader may wish to refer to Table~1.
{\small\[
\begin{array}{llllll}
({\tt b}) & a_1 \Omega = (-1)^{-n^2} \Omega a_1&&a_2 \Omega = \Omega a_2 &&a_3  \Omega = (-1)^{n^2} \Omega a_3 \\
({\tt c})  &a_1 \Omega = \Omega a_1 &&a_2 \Omega = \Omega a_2 &&\\
({\tt d}, n\text{ odd})  & a_1 \Omega = \Omega a_1 &&a_2 \Omega = \Omega a_2 &&a_3 \Omega = \Omega a_3  \\
({\tt g})  & a_1 \Omega = q^{-n^2}  \Omega a_1&&a_2 \Omega = \Omega a_2 && a_3 \Omega = q^{n^2}\Omega a_3 \\
({\tt h}) &a_1 \Omega = \Omega a_1  && a_2 \Omega = \Omega (2a_1+a_2)&& a_3\Omega = \Omega (6a_1+4a_2+a_3). 
\end{array}
\]}
\end{proof}

\subsection*{Acknowledgments}
The authors would like to thank W. Frank Moore for supplying corrections to some computations presented in Table~3, and we thank the referee for providing several helpful suggestions.
C. Walton and J.J. Zhang were supported by the US National Science
Foundation: NSF grants DMS-1550306, 1663775 and DMS-1402863
respectively. C. Walton is also supported by a research fellowship from the Sloan Foundation. E. Kirkman was supported by Simons Grant \#208314.

\bibliography{hopfmckay}

\end{document}